\documentclass[10pt]{article}

\usepackage{amsmath}\multlinegap=0pt
\usepackage[latin1]{inputenc}
\usepackage{amsthm}
\usepackage{amssymb}
\usepackage[francais,english]{babel}

\usepackage{amsmath}\multlinegap=0pt
\usepackage[latin1]{inputenc}
\usepackage{amsthm}
\usepackage{amssymb}
\usepackage[francais,english]{babel}
\usepackage{mathtools}

\addtolength{\textheight}{2 cm}
\addtolength{\hoffset}{-1 cm}
\addtolength{\textwidth}{2 cm}
\addtolength{\voffset}{-1 cm}

\numberwithin{equation}{section}
\theoremstyle{plain}
\newtheorem{thm}{Theorem}[section]

\newtheorem{prop}[thm]{Proposition}
\newtheorem{lem}[thm]{Lemma}

\newtheorem{dfn}[thm]{Definition}

\theoremstyle{definition}

\newtheorem{ex}[thm]{Example}
\theoremstyle{remark}

\usepackage{mathrsfs}
\usepackage{enumitem}

\title{A Self-dual Variational Approach to Stochastic Partial Differential Equations }

\author {Shirin Boroushaki\thanks{\scriptsize  \texttt{shirinbr@math.ubc.ca}.
This work is part of a PhD thesis prepared by this author under the supervision of N. Ghoussoub. Department of Mathematics, 1984 Mathematics Road,
  University of British Columbia, BC, V6T 1Z2, CANADA. 
 }\; 
 and Nassif Ghoussoub\thanks{\scriptsize \texttt{nassif@math.ubc.ca}. Both authors were partially supportted by a grant from the Natural Sciences and Engineering Research Council of Canada.
}}


\begin{document}

\def\pR{\R\cup\{+\infty\}}

\newcommand{\ntx}{\textnormal}
\newcommand\N{{\mathbb N}}
\newcommand\Z{{\mathbb Z}}
\newcommand\Q{{\mathbb Q}}
\newcommand\R{{\mathbb R}}
\newcommand\E{{\mathbb E}}
\newcommand\mL{{\mathcal{L}}}
\newcommand{\mB}{\mathcal{B}}

\def\F{{\mathcal {F}}}
\def\e{{\mathcal {E}}}
\def\P{{\mathbb P}}
\def\L{{\mathcal {L}}}
\def\X{{\mathcal {X}}}
\def\div{\textnormal{div} }
\def\inn{\textnormal{in} }
\def\on{\textnormal{on} }
\def\msX{\mathscr{X}}

\def\msE{\mathscr{E}}
\def\msM{\mathscr{M}}
\def\msH{\mathscr{H}}
\def\msF{\mathscr{F}}
\def\msL{\mathcal{L}}
\def \mbL{\mathbb{L}}
\def\vr{\vert}
\def\Vr{\Vert}

\def\F{{\cal F}}
\def\G{{\cal G}}
\def\L{{\cal L}}
\def\Rm{{\cal R}}
\def\div{\mbox{div} }
\def\t{\tilde}
\def\t{\tilde}

\maketitle

\begin{abstract} Unlike many of their deterministic counterparts, stochastic partial differential equations are not amenable to the methods of calculus of variations \`a la Euler-Lagrange. In this paper, we show how self-dual variational calculus leads to variational solutions of various stochastic partial differential equations driven by monotone vector fields. We construct solutions as minima of suitable non-negative and self-dual energy functionals on It\^o spaces of stochastic processes. We show how a stochastic version of Bolza's duality leads to solutions for equations with additive noise. We then use a Hamiltonian formulation to construct solutions for non-linear equations with non-additive noise such as the stochastic Navier-Stokes equations in dimension two. 
\end{abstract}


\section{Introduction}
Self-dual variational calculus was developed in the last fifteen years in an effort to construct variational solutions to various  partial differential equations and evolutions, that do not fall in the Euler-Lagrange framework of the standard calculus of variations. We refer to the monograph \cite{G-book} for a comprehensive account of that theory. In this paper, we show how such a calculus can be applied to solve stochastic partial differential equations, which clearly do not fit in Euler-Lagrange theory, since their solutions are not known to be critical points of energy functionals. We show here that at least for some of these equations, solutions can be obtained as minima of suitable self-dual functionals on It\^o spaces of random processes.  

The self-dual variational approach applies whenever stochastic partial differential equations are driven by monotone vector fields. These are operators $A: D(A)\subset V\to V^*$ --possibly set-valued-- from a  Banach space $V$ into its dual, that satisfy
\begin{equation}
\langle p-q, u-v\rangle \geq 0\quad \hbox{for all $(u,p)$ and $(v,q)$ on the graph of $A$.}
\end{equation} 
We shall be able to tackle SPDEs of the following form 
  \begin{equation}\label{three-SDE}
\begin{cases}
du(t)= -A (t,u(t))\, dt - \Lambda u(t)\, dt+B(t, u(t)) dW(t)\\
\, \,  u(0)=u_0, 
\end{cases}
\end{equation}
where $u_0 \in L^2(\Omega, \F_0,\P; H)$, $H$ being a Hilbert space, and $W(t)$ is a real-valued Wiener process on a complete probability space $(\Omega,\F,\P)$ with normal filtration $(\F_t)_t$. The vector field $A: \Omega \times [0, T] \times V \rightarrow V^*$ can be a time-dependent adapted random --possibly set-valued-- maximal monotone map, where $V$ is a Banach space such that $V\subset H\subset V^*$ constitute a Gelfand triple. The operators $\Lambda: V \rightarrow V^*$ and $B: [0, T] \times V \rightarrow H$ will be suitable linear or non-linear (progressively measurable in the case of $B$) maps. 
By solutions, we shall mean stochastic processes $u$
that verify the integral equation
 \begin{equation*}
u(t)=u_0-\int_0^t A(s,u(s))ds -\int_0^t \Lambda u(s)ds+ \int_0^t B(s,u(s))\, dW(s),
\end{equation*} 
where the stochastic integral is in the sense of It\^o. The cases when $B(t, \cdot)$ is a prescribed $H$-valued progressively measurable random process $B(t)$ are referred to as SPDEs with additive noise. \\
Self-dual variational calculus will allow us to deal with SPDEs in divergence form such as:
\begin{equation}\label{two-SDE}
\begin{cases}
{du (t)}= \div(\beta (\nabla u(t,x)))dt +B(t, u(t))dW(t)  \qquad & \textnormal{in} \ [0,T] \times D\\
u(t, x)=0 \qquad & \textnormal{in} \ [0,T] \times \partial D\\
u(0,x)=u_0(x) &\on \ D, 
\end{cases}
\end{equation}
where here, $y\to \beta (y)$ is a progressively measurable monotone vector field on $\R^n$, and 
$D$ is a bounded domain in $\R^n$. 

The genesis of self-dual variational calculus can be traced to a 1970 paper of Brezis-Ekeland \cite{BE1, BE2} (see also Nayroles \cite{Nayroles76, Nayroles76b}), where they proposed a variational principle for the heat equation and other gradient flows for 
convex energies. The conjecture was eventually verified by Ghoussoub-Tzou \cite{GT1}, who identified and exploited the self-dual nature of the Lagrangians involved. Since then, the theory was developed in many directions \cite{G1, G2, GM}, so as to provide existence results for several stationary and parabolic -but so far deterministic- PDEs, which may or may not be of Euler-Lagrange type.
 
While in most examples where the approach was used, the self-dual Lagrangians were explicit, 
an important development in the theory was the realization \cite{G4} that in a prior work, Fitzpatrick \cite{F} had associated a (somewhat) self-dual Lagrangian to any given monotone vector field. That meant that the variational theory could apply to any equation involving such operators. We refer to the monograph \cite{G-book} for a survey and for applications to existence results for solutions of several PDEs and evolution equations. 
We also note that since the appearance of this monograph, the theory has been successfully applied to the homogenization of periodic non-self adjoint problems (Ghoussoub-Moameni-Zarate \cite{GMS}). More recently, the self-dual approach was used in \cite{A1, A2}
 to tackle the more general problem of stochastic homogenization of such equations and to provide valuable quantitative estimates. 

The application of the method  to solving SPDEs is long overdue, though V. Barbu \cite{Bar} did use a Brezis-Ekeland approach to address SPDEs driven by gradients of a convex function and additive noise. After an earlier version of this paper was published, he notified us of a more recent work of his with R\"ockner \cite{BR} that deals with a non-additive but linear noise. We shall deal here with more general situations. We note that some of the equations below have already been solved by other methods, starting with the celebrated thesis of Pardoux \cite{pardoux2}, and many other subsequent works \cite{DPZ, par1, Pardoux1, PR}. This paper is about presenting a new variational approach, hoping it will lead to progress on other unresolved equations.  
 
To introduce the method, we consider the simplest example, where the monotone operator $A$ is given by the gradient $\partial \varphi$ of a (possibly random and progressively measurable) function $\varphi:  [0,T] \times H \rightarrow \pR$ such that for every $t\in [0,T]$,  
 the function $\varphi(t, \cdot)$ is convex and lower semi-continuous on a Hilbert space $H$, and the stochastics is driven by a given progressively measurable additive noise coefficient $B:\Omega\times [0,T] \rightarrow H$. The equation becomes
\begin{equation}\label{basic}
\begin{cases}
du(t)= -\partial \varphi (t,u(t)) dt + B(t) dW(t)\\
u(0)=u_0.%
\end{cases}
\end{equation}
We consider the  following {\it It\^o space over $H$}, 
\begin{equation*}
 \mathcal{A}^2_H= \left\{u: \Omega_T \rightarrow H ; \  u(t) = u(0) + \int_0^t \tilde{u}(s) ds+ \int_0^t F_u(s) dW(s)\right\},
\end{equation*} 
where
$u(0) \in L^2(\Omega,\F_0,\mathbb{P}; H)$, $ \tilde{u} \in L^2(\Omega_T; H)$ and $F_u \in L^2(\Omega_T; H)$, where $\Omega_T=\Omega \times [0, T].$
Here, both the drift $\tilde{u}$ and the diffusive term $F_u$ are progressively measurable. 
 The key idea is that a solution for (\ref{basic}) can be obtained by minimizing 
 the following functional on $\mathcal{A}^2_H$, 
\begin{equation*}
{I}(u)=\E \, \Big\{\int_0^T L_\varphi(u(t), -\tilde{u}(t)) \, dt+ \frac{1}{2} \int_0^T M_B(F_u(t),-F_u(t)) \, dt + \ell_{u_0}(u(0),u(T))\Big\},
\end{equation*} 
where 
\begin{itemize}
\item $L_\varphi$ is the (possibly random) time-dependent Lagrangian on $H\times H$ given by 
$$L_\varphi (u, p)= \varphi(w,t,u)+ \varphi^*(w, t,p),$$ 
where $\varphi^*$ is the Legendre transform of the convex function $\varphi$;

\item  $\ell_{u_0}$ is the time-boundary random Lagrangian on $H\times H$ given by 
$$\ell_{u_0}(a,b):=\ell_{u_0(w)}(a,b)=\frac{1}{2}  \Vr a\Vr^{^2}_H+ \frac{1}{2}  \Vr b\Vr^{^2}_H-2\langle u_0(w), a\rangle_H+\Vr u_0(w)\Vr^{^2}_H;$$
\item $M_B$ is the random time-dependent diffusive Lagrangian on $H\times H$, given by
$$M_B(G_1,G_2):=\Psi_{B(w, t)}(G_1)+\Psi_{B(w,t)}^*(G_2),$$ 
where $\Psi_{B(w, t)}:H \rightarrow \pR$ is the convex function  
$\Psi_{B(w, t)} (G)=\frac{1}{2} \Vr G-2B(w, t)\Vr^2_H.$

\end{itemize}
We note that it is not sufficient that $I$ attains its infimum on $\mathcal{A}^2_H$ at some $v$, but one needs to also show that the infimum is actually equal to zero, so as to obtain 
\begin{align*}
0=I(v)& =\  \E \int_0^T  \Big( \varphi(t,v)+ \varphi^*(t,-\tilde{v}(t))\Big) \, dt \\
& \quad+ \E\ \Big(\frac{1}{2}  \Vr v(0)\Vr^2_H+ \frac{1}{2}  \Vr v(T)\Vr^2_H -2\langle u_0, v(0)\rangle +\Vr u_0\Vr^2_H\Big) \\
&\quad + \E \int_0^T\Big(\frac{1}{2} \, \Vr F_v(t)-2B(t)\Vr_H^2\,+ \,\frac{1}{2} \, \Vr F_v(t) \Vr_H^2 -2 \langle F_v(t),B(t) \rangle \Big) \, dt,
\end{align*}
where we have used the fact that $\Psi_B^*(G)=\frac{1}{2} \, \Vr G \Vr_H^2 +2 \langle G,B\rangle_H$. \\
By using It\^o's formula, and by adding and subtracting the term $\E \int_0^T \langle v(t), \tilde{v}(t) \rangle dt$, we can rewrite $I(v)$ as the sum of 3 non-negative terms
\begin{align*}
\begin{split}
0=I(v)&=\E \int_0^T  \Big( \varphi(t,v)+ \varphi^*(t,-\tilde{v}(t))+\langle v(t), \tilde{v}(t) \rangle\Big) \, dt\\ 
&\quad + 2 \, \E \int_0^T \Vr F_v-B\Vr^2_H \, dt+ \E \ \Vr v(0)-u_0\Vr^2_H,
\end{split}
\end{align*}
which  yields that for almost all $t\in [0,T]$, $\P$-a.s.
$$ \varphi(t,v)+ \varphi^*(t,-\tilde{v}(t))+\langle v(t), \tilde{v}(t) \rangle=0,\hbox{
hence 
$-\tilde{v}(t)\in \partial \varphi(v(t)).$ }
$$
The two other identities readily give that $B=F_v$ and $v(0)=u_0$. In other words,
$v\in \mathcal{A}_H^2$, and satisfies (\ref{basic}). 
The self-dual variational calculus allows to apply the above approach in much more generality since the  special Lagrangians $L_\varphi$, $\ell_{u_0}$ and $M$ can be replaced by much more general self-dual Lagrangians. \\
In Section \ref{SD}, we shall collect --for the convenience of the reader-- the elements of self-dual variational theory that will be needed in the proofs. In Section \ref{lift}, we show how one can lift self-dual Lagrangians from state space to function spaces and then to  It\^o spaces of stochastic processes. In Section \ref{additive}, we give a variational resolution for basic SPDEs involving additive noise, such as
 \begin{equation}\label{one-SDE}
\begin{cases}
du(t)= -A (t,u(t)) dt + B(t) dW(t)\\
u(0)=u_0,
\end{cases}
\end{equation}
by establishing a stochastic version of the well known Bolza duality, which we believe is interesting in its own right as it may have applications to stochastic control problems.\\ 
Section \ref{Ex} contains applications 
to classical SPDEs such as the following stochastic evolution driven by a diffusion and a transport operator,
  \begin{equation}\label{transport_stoch.0}
\begin{cases}
du = (\Delta u  + \textbf{a}(x)\cdot \nabla u) dt+ B(t) dW \ & \textnormal{on} \ [0,T] \times  D \\
u(0)=u_0 & \textnormal{on} \ D,
\end{cases}
\end{equation} 
where 
$\textbf{a}: D \rightarrow \R^n$ is a smooth vector field with compact support in $D$, such that $\div(\textbf{a}) \geq 0$.\\
In Section \ref{non-add}, we deal with quite general SPDEs driven by  a self-dual Lagrangian on $L^\alpha(\Omega_T;V)\times L^\beta(\Omega_T;V^*)$ and a non-additive noise.  We then apply this result in Section \ref{last.section} to resolve equations of the form (\ref{three-SDE}) and (\ref{two-SDE}), such as 
\begin{equation}\label{exm-delta}
\begin{cases}
du(t)=\Delta u \, dt+ |u|^{q-1}u \, dW(t) \qquad & \textnormal{in} \ [0,T] \times D\\
u(t,x)=0 &\textnormal{in} \ [0,T] \times \partial D\\
u(0,x)=u_0(x) &\on \  D,  
\end{cases}
\end{equation}
where $\frac{1}{2}\leq q < \frac{n}{n-2}$, and 
\begin{equation}\label{exm-div}
\begin{cases}
{du}= \div(\beta(\nabla u(t,x)))dt + B(t,u(t))\, dW(t)  \qquad & \textnormal{in} \ [0,T] \times D\\
u(t,x)=0 &\textnormal{in} \ [0,T] \times \partial D\\
u(0,x)=u_0(x) &\on \  D, 
\end{cases}
\end{equation}
where 
$D$ is a bounded domain in $\R^n$ and the initial position  $u_0$ belongs to $L^2(\Omega, \F_0,\P; L^2(D))$.
We shall also deal with the stochastic Navier-Stokes equations in two dimensions, 
\begin{equation}
\begin{cases}
du(t)=\Delta u \, dt+ u\cdot \nabla u + \nabla p+ |u|^{q-1}u \, dW(t) \qquad & \textnormal{in} \ [0,T] \times D\\
\div(u)=0 & \textnormal{on} \ D\\
u(t,x)=0 &\textnormal{in} \ [0,T] \times \partial D\\
u(0,x)=u_0(x) &\on \ D,
\end{cases}
\end{equation}
where $\frac{1}{2}\leq q \leq 1$.

  
\section{Elements of self-dual variational calculus \label{SD}}

If $V$ is a reflexive Banach space and $V^*$ is its dual, then a  (jointly) convex lower semi-continuous Lagrangian $L: V \times V^* \rightarrow \R \cup \{+\infty\}$ is said to be \textit{self-dual} on $V \times V^*$ if 
\begin{equation}
L^*(p,u)=L(u,p), \qquad (u,p) \in V \times V^*,
\end{equation}
where $L^*$ is the Fenchel-Legendre dual of $L$ in both variables, i.e.,
$$L^*(q,v)= \sup \{\langle q,u \rangle + \langle v, p \rangle - L(u,p); \ u \in V,
p \in V^*\}.$$
Such Lagrangians satisfy the following basic property $$L(u,p)-\langle u,p \rangle \geq 0 , \qquad \forall \ (u,p) \in V \times V^*.$$
We are interested in the case when the above is an equality, hence we consider the corresponding --possibly multivalued-- \textit{self-dual vector field} $\bar{\partial}L:V \rightarrow 2^{V^*}$ defined for each $u\in V$ as the --possibly empty-- subset $\bar{\partial}L(u)$ of $V^*$ given by
$$\bar{\partial}L(u)=\{p\in V^*;\ L(u,p)- \langle u,p \rangle =0 \}=\{p\in V^*;\ (p,u) \in \partial L(u,p) \},
$$ where $\partial L$ is the subdifferential of the convex function $L$.

\subsection{Self-dual Lagrangians as potentials for monotone vector fields}

Self-dual vector fields are natural extensions of subdifferentials of convex lower semi-continuous functions. Indeed, the most basic self-dual Lagrangians are of the form $L(u,p) = \varphi(u) + \varphi^*(p)$ where $\varphi$ is a convex function on $V$, and $\varphi^*$ is its Fenchel dual on $V^*$ (i.e.,
$\varphi^*(p)= \sup\{\langle u,p\rangle -\varphi(u), u\in V \}$) for which
$$\bar{\partial}L(u)=\partial \varphi(u).$$
Other examples of self-dual Lagrangians are of the form $L(u,p) = \varphi(u) + \varphi^*(-\Gamma u+p)$ where $\Gamma: V \rightarrow V^*$ is a skew-adjoint operator. The
corresponding self-dual vector field is then $$\bar{\partial}L(u)=\partial \varphi(u)+\Gamma u.$$
Actually, both $\partial \varphi $ and $\partial \varphi+\Gamma $ are particular examples of {\it maximal monotone} operators, which
are set-valued maps $A:V \rightarrow 2^{V^*}$ whose graphs in $V \times V^*$ are maximal (for set inclusion) among all monotone
subsets $G$ of $V \times V^*$. In fact, it turned out that maximal monotone operators and self-dual vector fields are essentially the same. The following was first noted by Fitzpatrick \cite{F} (with a weaker notion of (sub) self-duality), and re-discovered and strengthened  later by various authors. See \cite{G-book} for details.

\begin{thm}\label{Fitz}
If $A: D(A) \subset V \rightarrow 2^{V^*}$ is a maximal monotone operator with a
non-empty domain, then there exists a self-dual Lagrangian $L$ on $V \times V^*$ such that $A = \bar{\partial}L$. Conversely, if $L$ is a proper self-dual Lagrangian on a reflexive Banach space $V \times V^*$, then the vector field $u\mapsto \bar{\partial}L(u)$ is maximal monotone.
\end{thm}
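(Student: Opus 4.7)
The plan is to handle the two directions separately, using the Fitzpatrick function as the bridge from the monotone world to the convex one.

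For the forward direction, given a maximal monotone $A$, I would first introduce its Fitzpatrick function
\begin{equation*}
F_A(u,p)=\sup_{(v,q)\in\mathrm{gr}(A)}\bigl\{\langle u,q\rangle+\langle v,p\rangle-\langle v,q\rangle\bigr\}.
\end{equation*}
As a supremum of continuous affine functionals, $F_A$ is convex and weak-$\ast$ lower semi-continuous on $V\times V^*$. Monotonicity of $A$ forces $F_A(u,p)\ge\langle u,p\rangle$ with equality whenever $(u,p)\in\mathrm{gr}(A)$, and maximality of $A$ promotes this to an equivalence. In general $F_A$ is only \emph{sub}-self-dual, i.e.\ $F_A^{*}(p,u)\ge F_A(u,p)$, so I would self-dualize by taking the proximal (epigraphic) average of $F_A$ with the map $(u,p)\mapsto F_A^{*}(p,u)$. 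A direct Fenchel computation shows that the resulting Lagrangian $L$ satisfies $L^{*}(p,u)=L(u,p)$, that the lower bound $L(u,p)\ge\langle u,p\rangle$ is preserved, and that the coincidence set $\{L(u,p)=\langle u,p\rangle\}$ remains exactly $\mathrm{gr}(A)$, whence $\bar{\partial}L=A$.

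For the converse direction, let $L$ be a proper self-dual Lagrangian and take any $(u_i,p_i)\in\mathrm{gr}(\bar{\partial}L)$ for $i=1,2$. The defining identities $L(u_i,p_i)=\langle u_i,p_i\rangle$, combined with the Fenchel--Young inequality applied between $L$ at $(u_1,p_1)$ and $L^{*}=L$ at $(p_2,u_2)$, yield
\begin{equation*}
\langle u_1,p_1\rangle+\langle u_2,p_2\rangle=L(u_1,p_1)+L(u_2,p_2)\ge\langle p_2,u_1\rangle+\langle u_2,p_1\rangle,
\end{equation*}
which rearranges to $\langle u_1-u_2,p_1-p_2\rangle\ge 0$, so $\bar{\partial}L$ is monotone. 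For maximality, I would invoke a Minty-type criterion: it suffices to exhibit, for every $(u_0,p_0)\in V\times V^{*}$, some $u\in V$ with $p_0-J(u-u_0)\in\bar{\partial}L(u)$, where $J$ is the normalized duality map. To produce this $u$, I would translate and regularize by setting
\begin{equation*}
M(u,p):=L(u+u_0,\,p+p_0)+\tfrac12\|u\|^{2}+\tfrac12\|p\|^{2}-\langle u,p_0\rangle-\langle p,u_0\rangle,
\end{equation*}
verify that $M$ is again self-dual and now coercive, and then invoke the fundamental variational principle of self-dual calculus (to be collected in Section~\ref{SD}): a coercive self-dual Lagrangian admits a zero of its associated self-dual vector field, realized as a minimizer of $M(u,0)$ whose infimum is forced to be zero by a Fenchel/minimax duality argument. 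Translating back produces the required extension point.

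The principal obstacle lies in the forward direction: while the Fitzpatrick function is canonical, it is only sub-self-dual, and after self-dualization one must verify that the coincidence set has not grown beyond $\mathrm{gr}(A)$. The inclusion $\mathrm{gr}(A)\subset\mathrm{gr}(\bar{\partial}L)$ is automatic from the averaging construction, but the reverse inclusion requires careful bookkeeping through the Fenchel conjugation and relies once again on the maximality of $A$ to exclude spurious equality points. On the converse side, the main technical step is establishing the existence-of-a-zero variational principle; this in turn rests on a non-trivial minimax principle (in the spirit of Ky Fan) applied to the self-dual expression $L(u,0)$.
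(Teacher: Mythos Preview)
The paper does not actually prove Theorem~\ref{Fitz}; it is stated as a known result with a reference to \cite{G-book} (and to Fitzpatrick's original paper \cite{F}) for the details. So there is no ``paper's proof'' to compare against, and your proposal is in effect a reconstruction of the argument from the cited literature. In outline it is the right one: the Fitzpatrick function followed by a self-dualizing step for the forward direction, and the self-dual variational principle (your reference to Section~\ref{SD} is exactly Theorem~\ref{var_prin}) to obtain surjectivity of $\bar\partial L + J$ for the converse.

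There is, however, a genuine slip in your maximality argument. The function
\[
M(u,p)=L(u+u_0,p+p_0)+\tfrac12\|u\|^{2}+\tfrac12\|p\|^{2}-\langle u,p_0\rangle-\langle p,u_0\rangle
\]
is \emph{not} in general self-dual: the pointwise sum of two self-dual Lagrangians is not a self-dual Lagrangian. What preserves self-duality is the partial inf-convolution of Lemma~\ref{conv}, and indeed the standard route (used, for instance, in the regularization step of Proposition~\ref{triple-thm}) is to form
\[
L_\lambda(u,p)=\inf_{z}\Bigl\{L(z,p)+\frac{1}{2\lambda}\|u-z\|^{2}+\frac{\lambda}{2}\|p\|^{2}\Bigr\},
\]
which is self-dual \emph{and} has $u\mapsto L_\lambda(u,0)$ coercive, so that Theorem~\ref{var_prin} applies directly to produce a point in the graph of $\bar\partial L_\lambda=\bar\partial L+\lambda^{-1}J$. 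After translation this gives exactly the Minty solution you want. Your strategy is correct; only the formula for the perturbed Lagrangian needs to be replaced by the inf-convolution one. The monotonicity half of the converse and the forward direction via the proximal average of $F_A$ and $F_A^{*T}$ are fine as sketched.
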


\noindent Another needed property of the class of self-dual Lagrangians is its  stability under convolution. 
\begin{lem} {\rm (\cite{G-book}  Proposition 3.4)}\label{conv}
If $L$ and $N$ are two self-dual Lagrangians on a reflexive Banach space $X \times X^*$ such that $\text{Dom}_1(L)-\text{Dom}_1(N)$ contains a neighborhood of the origin, then the Lagrangian defined by
$$(L\oplus N)(u,p)=\inf_{r\in X^*}\{L(u,r)+N(u,p-r)\}$$ is also self-dual on $X\times X^*$.
\end{lem}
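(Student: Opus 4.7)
The plan is to recast the self-duality of $L \oplus N$ as an application of the classical Moreau-Rockafellar inf-convolution formula on a suitably enlarged space.

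\textbf{Lifting to a product space.} On $X \times X^* \times X^*$ I would introduce the jointly convex lower semi-continuous functions
\[
\tilde L(u,p_1,p_2) := L(u,p_1), \qquad \tilde N(u,p_1,p_2) := N(u,p_2).
\]
Expanding the definition of $(L\oplus N)^*$ and substituting $(p_1,p_2) = (r,\,p-r)$ in the infimum, one obtains
\[
(L\oplus N)^*(q,v) \;=\; \sup_{u,p_1,p_2}\bigl\{\langle q,u\rangle + \langle v,p_1\rangle + \langle v,p_2\rangle - L(u,p_1) - N(u,p_2)\bigr\} \;=\; (\tilde L + \tilde N)^*(q,v,v).
\]

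\textbf{Applying Moreau-Rockafellar.} I would then apply $(\tilde L + \tilde N)^* = \tilde L^* \,\Box\, \tilde N^*$ with attainment of the infimum, valid whenever $0 \in \mathrm{int}\bigl(\mathrm{dom}(\tilde L) - \mathrm{dom}(\tilde N)\bigr)$. Because $p_2$ is free in $\tilde L$ and $p_1$ is free in $\tilde N$, one has $\mathrm{dom}(\tilde L) - \mathrm{dom}(\tilde N) = (\text{Dom}_1(L) - \text{Dom}_1(N)) \times X^* \times X^*$, so this constraint qualification is exactly the hypothesis of the lemma.

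\textbf{Computing the partial conjugates.} Taking suprema in the absent variables produces $+\infty$ unless the corresponding dual coordinates vanish, and the self-duality of $L$ and $N$ gives
\[
\tilde L^*(q,v_1,v_2) = \begin{cases} L(v_1,q), & v_2 = 0,\\ +\infty, & v_2 \neq 0,\end{cases} \qquad \tilde N^*(q,v_1,v_2) = \begin{cases} N(v_2,q), & v_1 = 0,\\ +\infty, & v_1 \neq 0.\end{cases}
\]
Evaluating $\tilde L^* \,\Box\, \tilde N^*$ at $(q,v,v)$ forces the unique decomposition $(v,v) = (v,0) + (0,v)$ in the two $X$-coordinates, leaving only the split $q = q_1 + q_2$ to optimize:
\[
(\tilde L^* \,\Box\, \tilde N^*)(q,v,v) \;=\; \inf_{q_1+q_2 = q}\bigl\{L(v,q_1) + N(v,q_2)\bigr\} \;=\; (L\oplus N)(v,q).
\]
Combined with the identity from the first step, this is the self-duality $(L\oplus N)^*(q,v) = (L\oplus N)(v,q)$, and convexity/lower semi-continuity of $L\oplus N$ follow a posteriori from its being its own bidual.

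\textbf{Main obstacle.} The one nontrivial point is matching the lemma's ``first-coordinate'' domain qualification with the standard Moreau-Rockafellar qualification on the lifted space; the geometry of the lift (free $p_2$ in $\tilde L$, free $p_1$ in $\tilde N$) is precisely what makes these two conditions coincide. Every remaining step is elementary: a change of variables, the Moreau-Rockafellar theorem, and a computation of partial conjugates that uses nothing beyond the self-duality of $L$ and $N$.
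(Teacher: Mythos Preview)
The paper does not actually prove this lemma; it is simply quoted from \cite{G-book} (Proposition~3.4) without argument, so there is no in-paper proof to compare against. Your argument is correct and complete: the lift to $X\times X^*\times X^*$ converts the \emph{partial} inf-convolution (in the $X^*$-variable only) into an honest sum $\tilde L+\tilde N$, so that the classical Moreau--Rockafellar formula $(\tilde L+\tilde N)^*=\tilde L^*\,\Box\,\tilde N^*$ applies; the free coordinates $p_2$ in $\tilde L$ and $p_1$ in $\tilde N$ make the qualification on the product space reduce exactly to the first-coordinate hypothesis $0\in\mathrm{int}\bigl(\text{Dom}_1(L)-\text{Dom}_1(N)\bigr)$, and the forced splitting $(v,v)=(v,0)+(0,v)$ in the conjugate computation is handled correctly. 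This is essentially the same mechanism used in the monograph (reducing to Fenchel duality for a sum via an auxiliary embedding), just packaged slightly differently.
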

As in deterministic evolution equations, one often aim for more regular solutions that are valued in suitable Sobolev spaces, as opposed to just $L^2$. Moreover,  the required coercivity condition (on the underlying Hilbert space) is quite restrictive and is not satisfied by most Lagrangians of interest.  
A natural setting is the so-called evolution triple of Gelfand, which consists of having a Hilbert space sandwiched between a reflexive Banach space $V$ and its dual $V^*$, i.e., 
$$V \subset H\cong H^* \subset V^*,$$
 where the injections are continuous and with dense range, in such a way that if $v\in V$ and $h\in H$, then $\langle v,h \rangle_H=\langle v,h \rangle_{_{V, V^*}}$. A typical evolution  triple is $V:=H^1_0(D)\subset H:=L^2(D) \subset V^*:= H^{-1}(D),$ where $D$ is a bounded domain in $\R^n$. The following lemma explains the connection between the self-duality on $H$ and $V$. 
\begin{lem}\label{lift-to-H}{\rm (\cite{G-book} Lemma 3.4)}
Let $V\subset H \subset V^*$ be an evolution triple, and suppose $L:V\times V^*\rightarrow \R\cup\{+\infty\}$ is a self-dual Lagrangian on the Banach space $V$, that satisfies for some $C_1, C_2>0$ and $r_1\geq r_2>1$,
$$C_2(\Vr u\Vr_V^{r_2}-1)\leq L(u,0) \leq C_1(1+\Vr u\Vr_V^{r_1}) \qquad \ntx{for all} \ u\in V.$$
Then, the Lagrangian defined on $H\times H$ by
$$\bar{L}(u,p):=
\begin{cases}
L(u,p) \quad &u\in V\\
+\infty &u\in H \backslash V
\end{cases}$$
is self-dual on the Hilbert space $H\times H$.
\end{lem}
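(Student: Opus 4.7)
The plan is to verify the self-duality identity $\bar{L}^*(p,u)=\bar{L}(u,p)$ on $H\times H$, where the Fenchel conjugate is taken with respect to the pairing $\langle v,p\rangle_H+\langle q,u\rangle_H$ on $H\times H$.

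First, I would check that $\bar{L}$ is proper, jointly convex, and lower semi-continuous on $H\times H$. Convexity is inherited from $L$, and properness follows from $L(0,0)<\infty$, a consequence of the assumed upper bound. For lower semi-continuity, suppose $(u_n,p_n)\to(u,p)$ in $H\times H$ with $\bar{L}(u_n,p_n)\leq C$. The Fenchel inequality derived from the self-duality of $L$ on $V$, applied with a test pair $(0,q)$, gives $\langle u_n,q\rangle_{V^*,V}\leq C+L(0,q)$ for every $q\in V^*$. Since $L(0,\cdot)$ is convex lsc on $V^*$ and finite at $0$, it is locally bounded above near the origin, which yields a uniform bound on $\Vert u_n\Vert_V$. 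By reflexivity, a weakly convergent subsequence has limit $u\in V$ (coinciding with the strong $H$-limit), and weak lsc of the convex lsc $L$ then gives $\bar{L}(u,p)\leq\liminf_n\bar{L}(u_n,p_n)$.

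Next, I would establish the easy direction $\bar{L}^*(p,u)\leq\bar{L}(u,p)$. For $u\notin V$, the right-hand side is $+\infty$, so there is nothing to prove. For $u\in V$, since $\bar{L}=+\infty$ off $V$ in the first argument, the sup defining $\bar{L}^*$ reduces to
\[
\bar{L}^*(p,u)=\sup_{v\in V,\,q\in H}\bigl\{\langle v,p\rangle_H+\langle q,u\rangle_H-L(v,q)\bigr\}.
\]
Using the Gelfand-triple identification of pairings (valid since $v,u\in V$ and $p,q\in H\subset V^*$), this sup is bounded above by the larger sup over $V\times V^*$, which equals $L^*(p,u)=L(u,p)=\bar{L}(u,p)$ by the self-duality of $L$ on $V$.

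The reverse inequality $\bar{L}^*(p,u)\geq\bar{L}(u,p)$ is the crux. My plan is to invoke the correspondence of Theorem \ref{Fitz}: self-duality of $L$ on $V$ identifies $A:=\bar{\partial}L\colon V\to 2^{V^*}$ as maximal monotone. Under the coercivity bounds on $L(\cdot,0)$, a Gelfand-triple lifting of monotone operators---carried out through the resolvents $(I+\lambda A)^{-1}$, which map $H$ into $V$ with controlled $V$-norm---shows that $A$ induces a maximal monotone operator $\bar{A}\colon H\to 2^H$ contained in $\bar{\partial}\bar{L}$. The easy direction above is equivalent to $\bar{L}$ satisfying the Fenchel inequality $\bar{L}(u,p)+\bar{L}(v,q)\geq\langle v,p\rangle_H+\langle q,u\rangle_H$ on $H\times H$. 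Combined with the maximal monotonicity of $\bar{A}$, a minimality argument applied to the Fitzpatrick Lagrangian of $\bar{A}$ forces equality $\bar{L}=\bar{L}^*$ on all of $H\times H$, including the conclusion $\bar{L}^*(p,u)=+\infty$ when $u\notin V$.

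The hard part will be precisely this lifting step. A naive density approach---approximating $q\in V^*$ by $q_n\in H$ in the sup defining $L^*(p,u)$---fails, because lower semi-continuity of $L$ provides only $\liminf_n L(v,q_n)\geq L(v,q_0)$, the wrong direction for preserving supremum values. The resolvent-based lifting is where the full strength of both bounds $C_2(\Vert u\Vert_V^{r_2}-1)\leq L(u,0)\leq C_1(1+\Vert u\Vert_V^{r_1})$ is used: the upper bound ensures that the resolvent is defined and sends $H$ into $V$, while the lower bound provides the $V$-coercivity needed to pass from surjectivity on $V^*$ to surjectivity on $H$ via Minty's theorem.
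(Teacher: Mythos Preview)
The paper does not prove this lemma; it is quoted from \cite{G-book} without argument. So there is no in-paper proof to compare against. That said, your proposal has a genuine gap and overlooks the direct route that the paper's own toolkit provides.

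\medskip
\textbf{The gap.} Your reverse inequality hinges on the claim that once $\bar A$ is maximal monotone on $H$ and contained in $\bar\partial\bar L$, ``a minimality argument applied to the Fitzpatrick Lagrangian of $\bar A$ forces equality $\bar L=\bar L^*$.'' This does not follow. A convex lsc function $N$ on $H\times H$ satisfying $N(u,p)\ge\langle u,p\rangle$ and $N^*(p,u)\le N(u,p)$, with $\bar\partial N$ equal to a given maximal monotone $\bar A$, need not be self-dual; the class of such ``representative'' functions for $\bar A$ is an interval between the Fitzpatrick function and its swap-conjugate, and only special members of that interval are self-dual. Nothing in your argument singles out $\bar L$ as such a member. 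The case $u\in H\setminus V$, where you must show $\bar L^*(p,u)=+\infty$, is also not addressed beyond an assertion.

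\medskip
\textbf{What you are missing.} You explicitly dismiss the density approach on the grounds that $L$ is only lower semi-continuous. But under the two-sided growth hypothesis, Lemma~\ref{boundedness-L}(2) of this very paper gives
\[
D_2\bigl(\Vert p\Vert_{V^*}^{s_1}+\Vert u\Vert_V^{r_2}-1\bigr)\le L(u,p)\le D_1\bigl(1+\Vert u\Vert_V^{r_1}+\Vert p\Vert_{V^*}^{s_2}\bigr),
\]
so $L$ is bounded on bounded sets of $V\times V^*$ and hence \emph{continuous} there. With continuity in hand, the direct argument is short. For $u\in V$ and $p\in H\subset V^*$, approximate any $q\in V^*$ by $q_n\in H$ in the $V^*$-norm; then $\langle q_n,u\rangle\to\langle q,u\rangle$ and $L(v,q_n)\to L(v,q)$, so the supremum over $q\in H$ in $\bar L^*(p,u)$ coincides with the supremum over $q\in V^*$, which is $L^*(p,u)=L(u,p)=\bar L(u,p)$. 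For $u\in H\setminus V$, take $v=0$ and use the upper bound $L(0,q)\le D_1(1+\Vert q\Vert_{V^*}^{s_2})$: if $\sup_{q\in H}\{\langle q,u\rangle_H-L(0,q)\}$ were finite, then $|\langle q,u\rangle_H|$ would be bounded on $\{q\in H:\Vert q\Vert_{V^*}\le 1\}$, and by density of $H$ in $V^*$ the map $q\mapsto\langle q,u\rangle_H$ would extend to a bounded linear functional on $V^*$, forcing $u\in V^{**}=V$, a contradiction. Thus $\bar L^*(p,u)=+\infty=\bar L(u,p)$.

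\medskip
In short: drop the resolvent/Fitzpatrick detour. The hypotheses were chosen precisely so that $L$ is continuous on $V\times V^*$, and the density of $H$ in $V^*$ then does all the work.
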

 
\subsection{Two self-dual variational principles}

The basic premise of self-dual variational calculus is that several differential systems 
can be written in the form 
$0 \in \bar{\partial}L(u),$
 where $L$ is a self-dual Lagrangian on phase space $V \times V^*$. These are the \textit{completely self-dual systems}. A solution to these systems can be obtained as a minimizer of a \textit{completely self-dual functional} $I(u)=L(u, 0)$ for which the minimum value is 0. The following is the basic minimization principle for self-dual energy functionals.

\begin{thm}\label{var_prin} {\rm (\cite{G0})} Suppose ${X}$ is a reflexive Banach space, and 
let ${L}$ be a self-dual Lagrangian on ${X} \times {X}^* $ 
such that the mapping $u\rightarrow {L}(u,0)$ is coercive in the sense that
$\underset{\Vr u \Vr \rightarrow \infty} \lim \frac{{L}(u,0)}{\Vr u \Vr}= +\infty$.
Then, there exists $\bar{u} \in {X}$ such that $I(\bar{u})= \inf\limits_{u \in {X}}{L}(u,0)=0.$
\end{thm}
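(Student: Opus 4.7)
The proof has two parts: producing a minimizer, and showing the minimum value vanishes. For existence, observe that $F(u) := L(u, 0)$ is convex (restriction of the jointly convex $L$), weakly lower semi-continuous (inherited from the joint lsc of $L$ on $X \times X^*$), and coercive by hypothesis. Since $X$ is reflexive, the sublevel sets of $F$ are weakly compact, and the direct method of the calculus of variations supplies a minimizer $\bar{u} \in X$ with $F(\bar{u}) = \inf_u L(u, 0) =: c$. The Fenchel--Young inequality applied to $L$, combined with self-duality, reads $2 L(u, p) = L(u, p) + L^*(p, u) \geq 2\langle u, p\rangle$, so $L(u, p) \geq \langle u, p\rangle$ pointwise on $X \times X^*$; in particular $c \geq 0$.

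For the reverse bound $c \leq 0$, my plan is to exploit self-duality through Fenchel duality on a marginal function. Define $\psi : X^* \to \R \cup \{\pm\infty\}$ by $\psi(p) := \inf_{u \in X} L(u, p)$. Joint convexity of $L$ makes $\psi$ convex, and a direct computation using self-duality gives
\[
\psi^*(v) = \sup_{u, p}\{\langle v, p\rangle - L(u, p)\} = L^*(0, v) = L(v, 0) = F(v)
\]
for every $v \in X$. Thus $F^* = \psi^{**}$, and evaluating at the origin yields $\psi^{**}(0) = F^*(0) = -\inf F = -c$. Provided one can also confirm $\psi^{**}(0) = \psi(0)$, it follows that $-c = \psi(0) = \inf_u L(u, 0) = c$, which forces $c = 0$.

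The decisive step is therefore the identity $\psi^{**}(0) = \psi(0)$, equivalently the lower semi-continuity of $\psi$ at the origin, and this is precisely where the coercivity hypothesis must be used. A Fenchel-type lower bound $L(u, p) \geq \langle v, p\rangle - F(v)$ for all $u, v$ (obtained by inserting $(q, v)$ with $q = 0$ into Fenchel--Young and invoking self-duality) yields $\psi \geq F^*$, and coercivity of $F$ makes $F^*$ finite and continuous on $X^*$; combined with the upper bound $\psi(p) \leq L(\bar{u}, p)$ and the convexity of $\psi$, this delivers the required closedness at $0$. An alternative route that sidesteps the marginal applies Sion's minimax theorem to the self-dual representation $F(u) = L^*(0, u) = \sup_{(v, q) \in X \times X^*}\{\langle u, q\rangle - L(v, q)\}$: minimize $u$ on the weakly compact ball of radius $R$ in $X$ (reflexivity), exchange inf and sup (legitimate since the integrand is affine hence weakly continuous in $u$, and concave upper semi-continuous in $(v, q)$), and then send $R \to \infty$; the penalty $-R\|q\|$ produced by the inner infimum forces the sup onto $q = 0$ in the limit, yielding $c \leq -c$. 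Either way, the real difficulty lies in controlling the behaviour of $L$ in the $p$-variable uniformly in $u$ near the origin, so that the duality (or the exchange of inf and sup) closes without gap.
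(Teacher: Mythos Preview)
The paper does not supply its own proof of this theorem; it is quoted from \cite{G0} as a black box, so there is nothing in the paper to compare against. Your architecture is the standard one and the computations $\psi^*=F$, $\psi^{**}(0)=F^*(0)=-c$ are correct, as is the identification of the crux: one must show $\psi^{**}(0)=\psi(0)$. However, neither of the two justifications you sketch actually establishes this.

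In the first route you note $\psi\ge F^*$ with $F^*$ continuous, and $\psi(p)\le L(\bar u,p)$. But a continuous \emph{lower} bound says nothing about lower semicontinuity of $\psi$, and the upper bound $p\mapsto L(\bar u,p)$ is only known to be convex lsc with a finite value at $0$; nothing prevents it from being $+\infty$ on every punctured neighbourhood of $0$, so you cannot conclude $\psi$ is locally bounded above (hence continuous) at $0$. The coercivity hypothesis constrains $u\mapsto L(u,0)$ only; it gives no control on $p\mapsto L(\bar u,p)$. In the Sion route, after restricting to $B_R$ and exchanging you correctly obtain $c_R=d_R:=\sup_{v,q}\{-R\|q\|-L(v,q)\}$, and $c_R\downarrow c$. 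But then $d_R\downarrow c$ as well, whereas you assert $d_R\to -c$ because ``the penalty forces $q=0$''. That assertion is exactly the statement $c=-c$ you are trying to prove: one always has $d_R\ge -c$ by taking $q=0$, but to get $d_R\le -c+\varepsilon$ one would need $L(v,q)+R\|q\|\ge c-\varepsilon$ uniformly in $(v,q)$, and for small nonzero $q$ the only available uniform bound $L(v,q)\ge F^*(q)\approx -c$ is insufficient when $c>0$. So the limit is $c$, not $-c$, and the argument is circular.

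What is missing is a genuine reason why $0\in\operatorname{int}(\operatorname{dom}\psi)$, or a surrogate that forces it. The standard remedies are either an inf-convolution regularization of $L$ in the second variable (so that $\psi_\lambda$ becomes everywhere finite and continuous, the identity $c_\lambda=0$ follows for each $\lambda$, and one passes to the limit), or a direct appeal to a Fenchel--Rockafellar qualification condition guaranteeing no duality gap for the perturbation function $\psi$. Your final sentence correctly names this difficulty, but the preceding arguments do not resolve it.
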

\noindent As noted in \cite{G0}, it actually suffices that ${L}$ be {\it partially self-dual}, that is if
$${L}^*(0,u)={L}(u,0) \quad \hbox{for every $u\in {X}$}.$$
 
We shall also need the Hamiltonian associated to a self-dual Lagrangian, that is the functional on $X\times X$ defined as
$H_L: X \times X \rightarrow \R\cup\{-\infty\} \cup \{+\infty\}$
$$H_L(u,v)=\sup_{p \in V^*} \{\langle v,p \rangle - L(u,p)\}, 
$$
which is the Legendre transform in the second 
variable. It is  easy to see that if $L$ is a self-dual Lagrangian on $X\times X^*$, then its Hamiltonian on $X\times X$ 
satisfies the following properties:
\begin{itemize}
\item $H_L$ 
 is concave in $u$ and convex lower semi-continuous in $v$.
 
\item 
$H_L(v,u) \leq -H_L(u,v)$ for all $u,v\in X$. 
\end{itemize}
As established in \cite{G3}, the Hamiltonian formulation allows for the minimization of direct sums of self-dual functionals.
The following variational principle is useful in the case when non-linear and unbounded operators are involved. 
 
\begin{thm}\label{superpose-thm} {\rm  (\cite{G-book})} Consider three reflexive Banach spaces $Z,X_1,X_2$ and operators $A_1 : D(A_1) \subset Z \rightarrow X_1$ , $\Gamma_1 : D(\Gamma_1) \subset Z \rightarrow X_1^*$, $A_2 : D(A_2) \subset Z \rightarrow X_2$, and  $\Gamma_2 : D(\Gamma_2) \subset Z \rightarrow X_2^*$,
such that $A_1$ and $A_2$ are linear, while $\Gamma_1$ and $\Gamma_2$ --not necessarily linear-- are weak-to-weak continuous.
Suppose $G$ is a closed linear subspace of $Z$ such that $G \subset D(A_1) \cap D(A_2)\cap D(\Gamma_1)\cap D(\Gamma_2)$, while the following properties are satisfied:
\begin{enumerate}
\item The image of $G_0:=\ntx{Ker}(A_2) \cap G$ by $A_1$ is dense in $X_1$.
\item The image of $G$ by $A_2$ is dense in $X_2$.
\item $ u \mapsto \langle A_1 u, \Gamma_1 u \rangle+\langle A_2 u, \Gamma_2 u \rangle$ is weakly upper semi-continuous on $G$.
\end{enumerate}
Let $L_i, i=1,2$ be self-dual Lagrangians on $X_i\times X_i^*$ such that the Hamiltonians ${H}_{L_i}$ are continuous in the first variable on $X_i$. Under the following coercivity condition,
\begin{equation}\label{superpose-coerc}
\lim_{\substack{\Vr u\Vr \rightarrow \infty \\ u\in G}} {H}_{L_1}(0,A_1 u) - \langle A_1 u,\Gamma_1 u \rangle +{H}_{L_2}(0,A_2 u) - \langle A_2 u,\Gamma_2 u \rangle=+\infty,
\end{equation}
the functional
$$I(u)= L_1(A_1 u,\Gamma_1 u) - \langle A_1 u,\Gamma_1 u \rangle+L_2(A_2 u,\Gamma_2 u) - \langle A_2 u,\Gamma_2 u \rangle$$
attains its minimum at a point $v\in G$ such that $I(v)=0$, and 
\begin{equation*}
\Gamma_1(v) \in \bar{\partial}L_1(A_1 v),
\end{equation*}
\begin{equation}
\Gamma_2(v) \in \bar{\partial}L_2(A_2 v).
\end{equation}
\end{thm}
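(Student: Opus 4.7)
The plan is to combine the direct method (coercivity plus weak lower semi-continuity) with a self-dual/Hamiltonian argument that forces the minimum value to equal zero. First, by self-duality of each $L_i$, the pointwise inequality $L_i(x,p) - \langle x,p\rangle \geq 0$ holds, so $I \geq 0$ on $G$. For coercivity, I would apply the self-duality identity $L_i(x,p) = L_i^*(p,x)$ with the test point $(0,q)$ inside the defining supremum to get the pointwise bound $L_i(x,p) \geq H_{L_i}(0,x)$ valid for every $p$; summing these bounds yields
\[
I(u) \geq H_{L_1}(0,A_1 u) - \langle A_1 u,\Gamma_1 u\rangle + H_{L_2}(0,A_2 u) - \langle A_2 u,\Gamma_2 u\rangle,
\]
and hypothesis (\ref{superpose-coerc}) forces $I(u)\to+\infty$ as $\Vr u\Vr\to\infty$ within $G$.

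For weak lower semi-continuity, note that the maps $u\mapsto A_i u$ are linear and continuous, and the $\Gamma_i$ are weak-to-weak continuous by assumption, so $u\mapsto (A_i u,\Gamma_i u)$ is weakly continuous from $Z$ into $X_i\times X_i^*$; composing with the convex lower semi-continuous $L_i$ makes the $L_i$-terms weakly lower semi-continuous in $u$. The cross terms $-\langle A_1 u,\Gamma_1 u\rangle-\langle A_2 u,\Gamma_2 u\rangle$ are weakly lower semi-continuous by hypothesis (3). Since $G$ is a closed linear subspace of the reflexive Banach space $Z$, it is weakly closed, so the direct method produces a minimizer $v\in G$.

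The main obstacle is showing $I(v)=0$. My approach would be to express $I$ through the Hamiltonian representation
\[
L_i(A_i u,\Gamma_i u) = \sup_{w_i\in X_i}\{\langle w_i,\Gamma_i u\rangle - H_{L_i}(A_i u,w_i)\},
\]
writing $I(u)$ as a supremum over dual test variables $(w_1,w_2)\in X_1\times X_2$, and then using the self-duality property $H_{L_i}(v,u)\leq -H_{L_i}(u,v)$ together with the density hypotheses to produce a competitor whose value is non-positive. Hypothesis (2) allows us to select $u\in G$ with $A_2 u$ approximating any prescribed element of $X_2$; within the resulting affine slice, hypothesis (1) then permits $A_1 u$ to approximate any prescribed element of $X_1$ without disturbing $A_2 u$. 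These two nested approximations, combined with the Hamiltonian inequality above and the weak upper semi-continuity of the cross terms (hypothesis (3), which substitutes for the skew-symmetry that would suffice in the linear case, e.g.\ the situation of Theorem \ref{var_prin} for a skew-adjoint $\Gamma$), are precisely what is needed to close the duality and conclude that the infimum of $I$ over $G$ cannot be strictly positive.

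Once $I(v)=0$ is established, the non-negativity of each of the two summands defining $I$ forces the equalities $L_i(A_i v,\Gamma_i v) = \langle A_i v,\Gamma_i v\rangle$ for $i=1,2$, which by definition of the self-dual vector field $\bar{\partial}L_i$ is exactly the conclusion $\Gamma_i v\in\bar{\partial}L_i(A_i v)$.
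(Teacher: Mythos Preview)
The paper does not prove this theorem; it is quoted from \cite{G-book} and used as a black box in Section~\ref{non-add}. So there is no in-paper proof to compare against.

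On the merits of your sketch: the overall architecture matches the argument in \cite{G-book}. The lower bound $I\ge 0$, the coercivity via $L_i(x,p)\ge H_{L_i}(0,x)$, and the final splitting into two nonnegative summands are all correct and standard. Two points deserve attention. First, you assert that $u\mapsto A_i u$ is continuous, but the theorem only assumes $A_i$ linear with domain $D(A_i)\supset G$; in the book's proof one works on $G$ equipped with a graph-type norm (or uses that the minimizing sequence stays bounded in a topology in which the $A_i$ behave well), so the weak lower semi-continuity step needs a bit more care than you give it. Second, and more importantly, the step ``$\inf I=0$'' is the heart of the matter and you only outline it. The actual argument in \cite{G-book} is a Ky Fan--type minimax: one introduces the functional
\[
K(u,w)=\sum_i\big(H_{L_i}(A_i w,A_i u)-\langle A_i u,\Gamma_i u\rangle+\langle A_i w,\Gamma_i w\rangle\big)
\]
on $G\times G$, checks concavity in $w$, weak lower semi-continuity in $u$ (this is where hypothesis (3) and the continuity of $H_{L_i}$ in the first variable enter), and uses the density hypotheses (1)--(2) exactly as you describe---first approximate in $X_2$ via $A_2(G)$, then within $\mathrm{Ker}(A_2)\cap G$ approximate in $X_1$ via $A_1$---to recover the full Legendre transform and conclude $\inf_u\sup_w K(u,w)\le 0$. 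Your paragraph gestures at this correctly but does not execute the minimax step or verify the Ky Fan hypotheses; as written it is an outline rather than a proof.
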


\section{Lifting  random self-dual Lagrangians to It\^o path spaces\label{lift}}
Let $V$ be a reflexive Banach space, and $T \in [0,\infty)$ be fixed. Consider a complete probability space $(\Omega, \F, \P)$ with a normal filtration $\F_t ,\  t \in [0, T]$, and let  $L^\alpha(\Omega \times [0,T]; V)$ be the space of Bochner integrable functions from $\Omega_T:= \Omega \times [0,T]$ into $V$ with the norm 
$\Vert u \Vert_{L^\alpha_V}^\alpha := \E \int_0^T \Vert u(t) \Vert_V^\alpha \ dt.$ 
We may use the shorter notation $L^\alpha_V(\Omega_T):= L^\alpha(\Omega \times [0,T]; V)$ in the sequel.
\begin{dfn}
A self-dual $\Omega_T$-dependent convex Lagrangian on $V \times V^*$ is a function $L:\Omega_T \times V \times V^* \rightarrow \pR$ such that:
\begin{enumerate}
\item $L$ is progressively measurable with respect to the $\sigma$-field generated by the products of $\F_t$ and Borel sets in $[0,t]$ and $V\times V^*$, i.e. for every $t\in [0,T]$, $L(t, \cdot, \cdot) $ is $\F_t \otimes \mathcal{B}([0,t]) \otimes \mathcal{B}(V)\otimes \mathcal{B}(V^*)$-measurable.
\item For each $t \in [0,T]$, $\P$-a.s. the function $L(t, \cdot,\cdot)$ is convex and lower semi-continuous  on $V \times V^*$.
\item For any $t\in [0,T]$, we have $\P$-a.s.
$L^*(t,p,u)=L(t,u,p)$ for all $(u,p)\in V \times V^*,$
where $L^*$ is the Legendre transform of $L$ in the last two variables.
\end{enumerate}
\end{dfn}
\noindent To each $\Omega_T$-dependent Lagrangian $L$ on $\Omega_T \times V \times V^*$, one can associate the corresponding Lagrangian $\mathcal{L}$ on the path space $L^\alpha_V(\Omega_T) \times L^\beta_{V^*}(\Omega_T)$, where $\frac{1}{\alpha}+ \frac{1}{\beta}=1$, to be
$$\mathcal{L}(u,p):= \E \int_0^T L(t, u(t), p(t)) \ dt,$$
with the duality between $L^\alpha_V(\Omega_T)$ and  $L^\beta_{V^*}(\Omega_T)$ given by 
$\langle u,p \rangle= \E \int_0^T \langle u(t),p(t) \rangle_{_{V,V^*}} dt.$
The associated Hamiltonian on $L^\alpha_V(\Omega_T) \times L^\alpha_V(\Omega_T)$ will then be 
$$H_{\mathcal{L}}(u,v) = \sup \Big\{\E \int_0^T \{ \langle v(t),p(t) \rangle - L(t, u(t), p(t))\}dt \ ; \ p \in L^\beta_{V^*}(\Omega_T) \Big\}.$$
The Legendre dual of a "lifted" Lagrangian in both variables naturally lifts to the space of paths $L^\alpha_V(\Omega_T) \times L^\beta_{V^*}(\Omega_T)$ via
\begin{equation*}\label{path_leg_1}
\mathcal{L}^*(q,v) = \sup_{\substack{u \in L^\alpha_V(\Omega_T) \\ p \in  L^\beta_{V^*}(\Omega_T) }} \Big\{ \E \int_0^T \{\langle q(t),u(t) \rangle + \langle v(t),p(t) \rangle - L( t, u(t), p(t))\} \, dt\Big\}.
\end{equation*}
The following proposition is standard. Under suitable boundedness conditions (see for example \cite{Ek}), if 
$L$ is an $\Omega_T$-dependent Lagrangian on $V \times V^*$, and $\mathcal{L}$ is the corresponding Lagrangian on the process space $L^\alpha_V(\Omega_T)  \times L^\beta_{V^*}(\Omega_T) $, then,
\begin{equation}
\hbox{$\mathcal{L}^*(p,u)= \E \int_0^T L^*(t, p(t), u(t)) dt$\quad and \quad 
  $H_{\mathcal{L}}(u,v)= \E \int_0^T H_L(t, u(t), v(t)) dt$.
  }
  \end{equation}
 

\subsection{Self-dual Lagrangians associated to progressively measurable monotone fields \label{mon.rep}}

Consider now a progressively measurable --possibly set-valued-- maximal monotone map that is a map $A:\Omega_T \times V \rightarrow 2^{V^*}$ that is measurable for each $t$, with respect to the product $\sigma$-field $\F_t \otimes \mathcal{B}([0,t]) \otimes \mathcal{B}(V)$, and such that for each $t\in [0,T]$, $\P$-a.s., the vector field $A_{\omega,t}:=A(t, \omega, \cdot,\cdot)$ is maximal monotone on $V$. By Theorem \ref{Fitz}, one can associate 
to the maximal monotone maps $A_{\omega,t}$, self-dual Lagrangians 
$L_{A_{\omega,t}}$ on $V \times V^*$, in such a way that \begin{equation*}
A_{\omega,t}=\bar{\partial}L_{A_{\omega,t}}\quad \hbox{for every $t\in[0,T]$, and $\P$-a.s.}
\end{equation*}
This correspondence can be done measurably in such a way that if $A$ is progressively measurable, then the same holds for the corresponding $\Omega_T$-dependent Lagrangian $L$. We can then lift the random Lagrangian to the space $L^\alpha_V(\Omega_T) \times L^\beta_{V^*}(\Omega_T)$ via 
\begin{equation*}
\mathcal{L}_A(u,p)=\E \int_0^T L_{A_{\omega,t}}(u(\omega,t),p(\omega,t))dt.
\end{equation*}
Boundedness and coercivity conditions on $A$ translate into corresponding conditions on the representing Lagrangians  as follows. For simplicity, we shall assume throughout that the monotone operators are single-valued, though the results apply for general vector fields. The following was proved in \cite{GMS}.
\begin{lem}\label{lem-AtoL} 
Let $A_{\omega,t}$ be the maximal monotone operator as above with the corresponding potential Lagrangian  $L_{A_{\omega,t}}$. Assume that for all $u\in V, dt \otimes \P \, a.s.$, $A_{\omega,t}$  satisfies
\begin{align}\label{A-cond2}
 \langle A_{\omega,t}u,u\rangle \geq \max \left\{  c_1(\omega,t)\Vr u \Vr_V^\alpha -m_1(\omega,t), \, c_2(\omega,t)\Vr A_{\omega,t}u \Vr^\beta_{V^*}-m_2(\omega,t)\right\},
\end{align}
\noindent where $c_1, c_2 \in L^\infty(\Omega_T, dt \otimes \P)$ and $m_1, m_2 \in L^1(\Omega_T, dt \otimes \P)$.
Then the corresponding Lagrangians satisfy the following:
\begin{equation*}
C_1(\omega,t) (\Vr u \Vr^\alpha_V+\Vr p \Vr^\beta_{V^*}-n_1(\omega,t)) \leq L_{A_{w,t}} (u,p)
\leq C_2(\omega,t)(\Vr u \Vr^\alpha_V+\Vr p \Vr^\beta_{V^*} +n_2(\omega,t)),
\end{equation*}
for some $C_1, C_2 \in L^\infty(\Omega_T)$ and $n_1, n_2\in L^1(\Omega_T)$.\\
The lifted Lagrangian on the $L^\alpha$-spaces then satisfy for some $C_1,C_2>0$, 
\begin{equation*}
 C_1 (\Vr u \Vr^\alpha_{L^\alpha_V(\Omega_T)}+\Vr p \Vr^\beta_{L^\beta_{V^*}(\Omega_T)}-1) \leq  \mathcal{L}_A(u,p)
\leq C_2(1+\Vr u \Vr^\alpha_{{\cal L}^\alpha_V(\Omega_T)}+\Vr p \Vr^\beta_{L^\beta_{V^*}(\Omega_T)}).
\end{equation*}

\end{lem}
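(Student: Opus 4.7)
The plan is to push the coercivity assumption from $A$ through to its self-dual representative $L_A$ in two stages (upper bound first, then lower bound), and then integrate the pointwise estimates to obtain the bounds on the lifted Lagrangian. Throughout I shall work $\P$-a.s.\ and pointwise in $t$, suppressing $(\omega,t)$; the claim about lifted bounds then follows from Fubini once the pointwise constants $C_i(\omega,t)$ and remainders $n_i(\omega,t)$ have the right $L^\infty$ and $L^1$ regularity.

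First I would derive an apriori bound on $\|A u\|_{V^*}$. From
\[
c_2\|Au\|_{V^*}^{\beta} \le \langle Au, u\rangle + m_2 \le \|Au\|_{V^*}\|u\|_V + m_2,
\]
Young's inequality with conjugate exponents $(\alpha,\beta)$ absorbs $\|Au\|_{V^*}\|u\|_V$ into $\tfrac{c_2}{2}\|Au\|_{V^*}^\beta + C_\alpha c_2^{-\alpha/\beta}\|u\|_V^\alpha$, giving
\[
\|Au\|_{V^*}^{\beta} \le K_1\bigl(\|u\|_V^\alpha + m_2\bigr).
\]

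Next I would establish the upper bound on $L_A$. Using Fitzpatrick's identity $F_A(u,p)=\sup_{(v,q)\in\mathrm{gr}(A)}\{\langle u,q\rangle + \langle v,p\rangle - \langle v,q\rangle\}$ and the coercivity estimate $\langle v,q\rangle \ge \tfrac12 c_1\|v\|_V^\alpha + \tfrac12 c_2\|q\|_{V^*}^\beta - \tfrac12(m_1+m_2)$ for every $(v,q)\in\mathrm{gr}(A)$, one obtains
\[
F_A(u,p) \le \sup_v\!\Bigl\{\langle v,p\rangle - \tfrac{c_1}{2}\|v\|_V^\alpha\Bigr\} + \sup_q\!\Bigl\{\langle u,q\rangle - \tfrac{c_2}{2}\|q\|_{V^*}^\beta\Bigr\} + \tfrac{m_1+m_2}{2},
\]
and each supremum evaluates via Fenchel conjugation to a constant multiple of $\|p\|_{V^*}^\beta$ (resp.\ $\|u\|_V^\alpha$). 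The self-dual Lagrangian $L_A$ produced by Theorem~\ref{Fitz} can be chosen sandwiched between $F_A$ and $F_A^{*}$ (the standard Ghoussoub--Bauschke--Penot--Z\u{a}linescu construction), so it inherits the same growth:
\[
L_A(u,p) \le C_2\bigl(\|u\|_V^\alpha + \|p\|_{V^*}^\beta + n_2\bigr).
\]

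For the lower bound I would exploit self-duality directly: since $L_A^{*}(p,u)=L_A(u,p)$, the upper bound $L_A(u,p)\le G(u,p)$ with $G(u,p):=C_2(\|u\|_V^\alpha+\|p\|_{V^*}^\beta+n_2)$ implies
\[
L_A(u,p)=L_A^{*}(p,u)\ge G^{*}(p,u)=\tilde C\bigl(\|u\|_V^\alpha+\|p\|_{V^*}^\beta\bigr)-C_2 n_2,
\]
the Legendre transform of a separated sum of homogeneous powers being explicit. This yields the claimed two-sided estimate with $C_1(\omega,t), C_2(\omega,t)\in L^\infty(\Omega_T)$ and $n_i(\omega,t)\in L^1(\Omega_T)$, since these come polynomially from $c_i,m_i$. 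Integrating over $\Omega_T$ and invoking the definition $\mathcal L_A(u,p)=\E\int_0^T L_{A_{\omega,t}}(u,p)\,dt$ gives the final bounds on $L^\alpha_V(\Omega_T)\times L^\beta_{V^*}(\Omega_T)$.

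The main obstacle is the upper bound step: passing from the Fitzpatrick function (which is only sub-self-dual and may fail to be bounded above by a linear-in-norms multiple of $\|u\|_V^\alpha+\|p\|_{V^*}^\beta$ without maximal monotonicity and the coercivity of $A$) to a true self-dual Lagrangian while retaining the growth. Handling measurability of the selection $\omega\mapsto L_{A_{\omega,t}}$ so that the constants $C_i, n_i$ remain progressively measurable is the secondary technical issue, but is standard given progressive measurability of $A$.
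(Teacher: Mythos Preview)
The paper does not actually prove this lemma; it is stated with the remark ``The following was proved in \cite{GMS}'' and no argument is supplied. So there is no proof in the paper to compare against directly, and your proposal must stand on its own.

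Your overall architecture is sound: derive pointwise two-sided growth on $L_{A_{\omega,t}}$ from the coercivity of $A$, then integrate. Step 1 (the a priori bound $\|Au\|_{V^*}^\beta \le K_1(\|u\|_V^\alpha + m_2)$) and Step 3 (deducing the lower bound on $L_A$ from an upper bound via self-duality $L_A(u,p)=L_A^*(p,u)$) are both correct and clean.

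The gap is in Step 2, and you have in fact flagged it yourself. You bound the Fitzpatrick function $F_A$ \emph{from above}, and then assert that because $L_A$ is sandwiched as $F_A \le L_A \le F_A^{*T}$, it ``inherits the same growth.'' But an upper bound on $F_A$ is useless for bounding $L_A$ from above, since $L_A \ge F_A$. Dualizing $F_A \le G$ gives $F_A^* \ge G^*$, which is a \emph{lower} bound on $F_A^{*T}$, not the upper bound you need. So Step 2 as written does not produce the inequality $L_A(u,p)\le C_2(\|u\|_V^\alpha + \|p\|_{V^*}^\beta + n_2)$, and without it Step 3 has nothing to feed on.

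The fix is to reverse the direction: establish a \emph{lower} bound $F_A(u,p)\ge c(\|u\|_V^\alpha + \|p\|_{V^*}^\beta) - n$ directly from the coercivity hypothesis. This is where your Step 1 and the assumption $\langle Av,v\rangle \ge c_1\|v\|_V^\alpha - m_1$ are actually needed: for any $(w,Aw)$ on the graph one has $F_A(u,p)\ge \langle u,Aw\rangle + \langle w,p\rangle - \langle w,Aw\rangle$, and the bound $\|Aw\|_{V^*}^\beta \le K_1(\|w\|_V^\alpha + m_2)$ lets you control $\langle w,Aw\rangle$ from above and optimize in $w$. Once $F_A$ is bounded below by a coercive separated function $g$, conjugation gives $F_A^{*T}\le g^{*T}$, hence $L_A\le g^{*T}$, and then your Step 3 closes the loop. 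Alternatively, the same lower bound on $F_A$ already gives $L_A\ge F_A\ge g$ directly, making Step 3 redundant.
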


\subsection{It\^o path spaces over a Hilbert space}

Suppose now that $U$ is a Hilbert space. 
For $t\in [0,T]$, a cylindrical Wiener process $W(t)$ in $U$ can be represented by  
$$W(t)=\sum_{k\in \mathbb{N}} \beta_k(t) \, e_k, \qquad t\geq 0,$$
where $\{\beta_k\}$ is a sequence of mutually independent Brownian motions on the filtered probability space and $\{e_k\}$ is an orthonormal basis in $U$. For simplicity, we shall 
 assume in the sequel that  $W$ is a real-valued Wiener process i.e. $U=\R$.
We now recall It\^o's formula. 

\begin{prop}\label{Ito-thm} {\rm (\cite{Pardoux1}, \cite{PR})} Let $H$ be a Hilbert space with $\langle \, , \, \rangle_H$ as its scalar product. Fix $x_0 \in L^2(\Omega,\F_0,\P;H)$, and let $y \in L^2(\Omega_T ;H)$, $Z\in L^2(\Omega_T;H)$ be two progressively measurable processes. Define the $H$-valued process $u$ as
\begin{align}\label{ito-process}
u(t):= x_0+ \int_0^t y(s) ds + \int_0^t Z(s)dW(s). 
\end{align}
Then, the following hold:
\begin{enumerate}
\item $u$ is a continuous $H$-valued adapted process such that $\E\left(\sup_{t\in [0,T]} \Vr u(t)\Vr_H^2 \right) < \infty$.
\item {\rm{(It\^o's formula)}} For all $t\in[0,T]$,
$$\Vr u(t)\Vr_H^2=\Vr x_0\Vr_H^2+2 \int_0^t \langle y(s), u(s) \rangle_H ds + \int_0^t \Vr Z(s)\Vr^2_H ds+ 2 \int_0^t \langle u(s), Z(s)\rangle_H dW(s),$$
\end{enumerate}
and consequently
\begin{equation*}\label{E_ito_formula}
\E(\Vr u(t) \Vr^2_H)= \E(\Vr x_0 \Vr^2_H)+\E\int_0^t \Big(2\langle y(s), u(s) \rangle_H+ \Vr Z(s) \Vr^2_H \Big)ds. 
\end{equation*}
\end{prop}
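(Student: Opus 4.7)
The plan is to prove both statements simultaneously, first by reducing to elementary (simple, finite-rank) integrands for which It\^o's formula is a direct consequence of its classical scalar version, and then by passing to the limit using the maximal estimate of part~(1) as a quantitative convergence tool. For part~(1) itself, observe that $t\mapsto \int_0^t y(s)\,ds$ is $H$-valued Bochner-continuous, while $M(t):=\int_0^t Z(s)\,dW(s)$ is, by the construction of the $H$-valued It\^o integral, a continuous $H$-valued martingale with $\E\Vr M(t)\Vr_H^2=\E\int_0^t\Vr Z(s)\Vr_H^2\,ds$. Applying Doob's $L^2$-maximal inequality to the real submartingale $\Vr M\Vr_H$, together with the pathwise bound $\sup_t\Vr\int_0^t y(s)\,ds\Vr_H\le\int_0^T\Vr y(s)\Vr_H\,ds$, yields
\begin{equation*}
\E\bigl(\sup_{t\in[0,T]}\Vr u(t)\Vr_H^2\bigr)\le C\Bigl(\E\Vr x_0\Vr_H^2+T\,\E\!\int_0^T\Vr y(s)\Vr_H^2\,ds+\E\!\int_0^T\Vr Z(s)\Vr_H^2\,ds\Bigr)<\infty.
\end{equation*}

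For part~(2), I would first establish the identity when $y$ and $Z$ are simple processes taking values in a finite-dimensional subspace $H_n:=\mathrm{span}(e_1,\dots,e_n)$ of some orthonormal basis $\{e_k\}_{k\in\N}$ of $H$, and $x_0$ is bounded and $H_n$-valued. Setting $u_k(t):=\langle u(t),e_k\rangle_H$, $y_k:=\langle y,e_k\rangle_H$, $Z_k:=\langle Z,e_k\rangle_H$, each $u_k$ is a scalar It\^o process; the classical scalar It\^o formula applied to $u_k^2$ and summed over $k=1,\dots,n$ produces exactly the asserted identity, after invoking $\Vr u\Vr_H^2=\sum_k u_k^2$, $\langle y,u\rangle_H=\sum_k y_ku_k$, $\Vr Z\Vr_H^2=\sum_k Z_k^2$ and $\langle u,Z\rangle_H=\sum_k u_kZ_k$ (no trace correction arises, since $W$ is one-dimensional). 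For the general case, pick simple approximants $x_0^{(n)}\to x_0$ in $L^2(\Omega,H)$ and $y^{(n)}\to y$, $Z^{(n)}\to Z$ in $L^2(\Omega_T;H)$, let $u^{(n)}$ denote the corresponding It\^o process, and apply the estimate above to $u-u^{(n)}$ to obtain $\E\sup_{t\in[0,T]}\Vr u(t)-u^{(n)}(t)\Vr_H^2\to 0$. The two Lebesgue terms pass to the limit by dominated convergence, while the scalar martingale term $\int_0^t\langle u^{(n)},Z^{(n)}\rangle_H\,dW$ does so in $L^2(\Omega)$ via the It\^o isometry and the pointwise bound $|\langle u^{(n)},Z^{(n)}\rangle_H-\langle u,Z\rangle_H|\le \Vr u^{(n)}-u\Vr_H\Vr Z^{(n)}\Vr_H+\Vr u\Vr_H\Vr Z^{(n)}-Z\Vr_H$.

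The main technical obstacle is the meaningfulness of the limiting scalar stochastic integral $\int_0^t\langle u(s),Z(s)\rangle_H\,dW(s)$, which requires at least that $\int_0^T\langle u,Z\rangle_H^2\,ds<\infty$ $\P$-a.s. The pathwise Cauchy--Schwarz bound $\int_0^T\langle u,Z\rangle_H^2\,ds\le\sup_{t\le T}\Vr u(t)\Vr_H^2\cdot\int_0^T\Vr Z(s)\Vr_H^2\,ds$ is a.s.\ finite because $u$ has a.s.\ continuous $H$-valued paths by part~(1), and $Z\in L^2(\Omega_T;H)$ forces $\int_0^T\Vr Z\Vr_H^2\,ds<\infty$ $\P$-a.s.; thus the integral is defined as a (local) martingale. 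If $L^2$-convergence in the density argument proves insufficient, one falls back on a standard localization by the stopping times $\tau_N:=\inf\{t:\Vr u(t)\Vr_H\ge N\}\wedge T$, on which all quantities are bounded, and then lets $N\to\infty$. In this sense parts~(1) and~(2) are genuinely coupled: the maximal estimate of (1) supplies both the integrability that renders the stochastic integral in (2) meaningful and the quantitative control that drives the approximation.
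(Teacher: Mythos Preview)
Your argument is correct and is essentially the standard proof one finds in the references the paper cites. Note, however, that the paper does not actually prove this proposition: it is stated with the attribution {\rm (\cite{Pardoux1}, \cite{PR})} and no proof is given, as this is a classical result being quoted for later use. So there is nothing in the paper to compare your proof against; your sketch simply reconstructs the textbook argument (finite-dimensional reduction via an orthonormal basis, scalar It\^o formula componentwise, Doob's maximal inequality for the $L^2$ control, and passage to the limit with optional localization) that underlies the cited references.
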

\noindent More generally, the following \textit{integration by parts} formula holds.  For two processes $u$ and $v$ of the form:
$$u(t) = u(0) + \int_0^t \tilde{u}(s) ds+ \int_0^t F_u(s) dW(s), \quad v(t)=v(0)+\int_0^t \tilde{v}(s) ds + \int_0^t G_v(s)dW(s),$$
we have 
\begin{align}\label{int_by_parts}
\nonumber\E \int_0^T \langle u(t), \tilde{v}(t) \rangle dt=&-\E \int_0^T \langle v(t), \tilde{u}(t) \rangle dt -\E \int_0^T \langle F_u(t), G_v(t)\rangle dt\\
& + \E \langle u(T), v(T) \rangle_H -\E \langle u(0), v(0) \rangle_H.
\end{align}
Now we define the {\it It\^o space} $\mathcal{A}^2_H$ consisting of all $H$-valued processes of the following form:
\begin{align}\label{spaceA22}
\begin{split}
\mathcal{A}^2_H= \Big\{u: &\Omega_T \rightarrow H ; \  u(t) = u(0) + \int_0^t \tilde{u}(s) ds+ \int_0^t F_u(s) dW(s), \\
&\textnormal{for} \ u(0) \in  L^2(\Omega,\F_0,\P;H), \ \tilde{u} \in L^2(\Omega_T; H), \, F_u \in L^2(\Omega_T; H)\Big\},
\end{split}
\end{align}
where $\tilde{u}$ and $F_u$ are both progressively measurable. We equip $\mathcal{A}^2_H$ with the norm
\begin{align*}
\Vert u \Vert^2_{\mathcal{A}^2_H}= \E \left(\Vert u(0)\Vert^2_H +\int_0^T \Vert \tilde{u}(t) \Vert^2_{H} \, dt+ \int_0^T  \Vert F_u(t)\Vert^2_{H} \, dt  \right), 
\end{align*}
so that it becomes a Hilbert space. Indeed, the following correspondence 
\begin{align}\label{ito-correspond}
\nonumber (x_0,y,Z)& \in L^2(\Omega; H)\times L^2(\Omega_T; H) \times L^2(\Omega_T; H) \\
\nonumber &\mapsto x_0 + \int_0^t y(s) ds+ \int_0^t Z(s) dW(s) \in \mathcal{A}^2_H,\\
\quad\\
\nonumber u \in \mathcal{A}^2_H &\mapsto (u(0), \tilde u, F_u) \in L^2(\Omega; H)\times L^2(\Omega_T; H) \times L^2(\Omega_T; H), 
\end{align}
induces an isometry, since 
It\^o's formula applied to two processes $u,v \in \mathcal{A}^2_H$ yields
\begin{align*}
\Vr u(t)-v(t)\Vr_H^2=\Vr u(0)& -v(0)\Vr_H^2+2 \int_0^t \langle \tilde u(s)-\tilde v(s), u(s)-v(s) \rangle_H ds \\
&+ \int_0^t \Vr F_u(s)-F_v(s) \Vr^2_H ds+ 2 \int_0^t \langle u(s)-v(s), F_u(s)-F_v(s) \rangle_H dW_s,
\end{align*}
which means that $u=v$ if and only if $u(0)=v(0)$, $F_u=F_v$ and $\tilde u=\tilde v$. We therefore can and shall identify the It\^o space  $\mathcal{A}^2_H$ with the product space $L^2(\Omega;H) \times L^2(\Omega_T;H) \times L^2(\Omega_T;H)$. \\
 The dual space $(\mathcal{A}^2_H)^*$ can also be identified with $L^2(\Omega; H) \times L^2(\Omega_T;H) \times  L^2(\Omega_T; H)$. In other words, each $p \in (\mathcal{A}^2_H)^*$ can be represented by the triplet 
$$p = (p_0, p_1(t), P(t)) \in L^2(\Omega; H) \times L^2(\Omega_T; H) \times  L^2(\Omega_T; H),$$ 
in such a way that the duality can be written as: 
\begin{equation}\label{duality_A22}
\langle u, p \rangle_{{\mathcal{A}^2_H}\times (\mathcal{A}^2_H)^*} =\E   \Big\{ \langle p_0,u(0) \rangle_H + \int_0^T\langle p_1(t),\tilde{u}(t) \rangle_H\, dt + \frac{1}{2} \int_0^T\langle P(t),F_u(t) \rangle_{H} \, dt \Big \}.
\end{equation}
  \subsection{Bolza duality for random processes}
 We now prove the following stochastic analogue of the Bolza duality established in the deterministic case by Rockafellar \cite{rock2}. 

\begin{thm}\label{partial_sd_thm}
Let $(\Omega, \F, \F_t, \P)$ be a  complete probability space with normal filtration, and let $L$ and $M$ be two $\Omega_T$-dependent self-dual Lagrangians on $H \times H$, 
Assume $\ell$ is an $\Omega$-dependent function on $H \times H$, such that $\P$-a.s.  
\begin{equation}\label{boundary-ell}
\ell(\omega,a,b)=\ell^*(\omega,-a,b), \qquad (a,b) \in H\times H.
\end{equation}
The Lagrangian on $\mathcal{A}^2_H\times (\mathcal{A}^2_H)^*$ defined by
\begin{align}\label{partial_sdLag_onA2}
\begin{split}
\mathcal{L}(u,p)&=
\E \, \Big\{\int_0^T L(u(t)-p_1 (t), -\tilde{u}(t)) \, dt+ \ell(u(0)-p_0,u(T)) \\ &\quad + \ \frac{1}{2} \int_0^T M(F_u(t) -P(t),-F_u(t)) \, dt \Big\},
\end{split}
\end{align}
is then partially self-dual. Actually, it is self-dual on the subset $\mathcal{A}^2_H\times {\cal D}$ of $\mathcal{A}^2_H\times (\mathcal{A}^2_H)^*,$ where ${\cal D}:=(\{0\} \times L^2_H \times L^2_H)$.  
\end{thm}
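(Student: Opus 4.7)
The plan is to verify directly that $\mathcal{L}^*(q, v) = \mathcal{L}(v, q)$ for all $v \in \mathcal{A}^2_H$ and $q = (q_0, q_1, Q) \in \mathcal{D}$, using the Legendre transform associated to the pairing \eqref{duality_A22}. Starting from
\[
\mathcal{L}^*(q, v) = \sup_{u \in \mathcal{A}^2_H,\, p \in (\mathcal{A}^2_H)^*} \bigl\{\langle q, u\rangle + \langle p, v\rangle - \mathcal{L}(u, p)\bigr\},
\]
I would perform the change of variables $a := u(0) - p_0$, $w := u - p_1$, $r := F_u - P$, under which $(a, w, r)$ becomes a set of free dual variables in $L^2(\Omega; H) \times L^2(\Omega_T; H) \times L^2(\Omega_T; H)$ replacing $p$. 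This places each of the three Lagrangian pieces of $\mathcal{L}$ in its canonical self-dual form while pushing the linear parts of the sup onto $u$ and the new variables.

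Next, I would apply the It\^o integration by parts formula \eqref{int_by_parts} to the cross term $\E\int_0^T \langle u, \tilde v\rangle \, dt$ (which appears when the pairing $\langle p_1, \tilde v\rangle$ is expanded via $p_1 = u - w$), rewriting it as $\E\langle u(T), v(T)\rangle - \E\langle u(0), v(0)\rangle - \E\int \langle v, \tilde u\rangle dt - \E\int \langle F_u, F_v\rangle dt$. Two cancellations simplify the expression: the boundary term $\E\langle u(0), v(0)\rangle$ cancels the contribution from $\langle p_0, v(0)\rangle$ produced by substitution, while the diffusive cross term $-\E\int \langle F_u, F_v\rangle dt$ combines with the $\frac{1}{2}\E\int \langle F_u, F_v\rangle dt$ coming from the substitution to yield $-\frac{1}{2}\E\int \langle F_u, F_v\rangle dt$; this is exactly the factor needed to pair with the diffusion Lagrangian $M$. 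Imposing $q_0 = 0$ removes the leftover linear term $\E\langle q_0, u(0)\rangle$, whose sup over $u(0) \in L^2(\Omega, \F_0; H)$ would otherwise be $+\infty$, which is precisely why the full self-duality requires the restriction to $\mathcal{D}$.

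The remaining sup over $(u(0), \tilde u, F_u, a, w, r)$ decomposes into three blocks: a drift block in $(w, \tilde u)$, a diffusion block in $(r, F_u)$ carrying the factor $\frac{1}{2}$, and a boundary block in $(a, u(T))$. By the lifted self-duality of $L$ and $M$ on the path spaces (Section \ref{lift}), the drift sup evaluates to $\E\int_0^T L(v - q_1, -\tilde v)\,dt$ via $L^*(-\tilde v, v - q_1) = L(v - q_1, -\tilde v)$, and the diffusion sup to $\frac{1}{2}\E\int_0^T M(F_v - Q, -F_v)\,dt$ via $M^*(-F_v, F_v - Q) = M(F_v - Q, -F_v)$. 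For the boundary block, the joint sup over $a \in L^2(\Omega; H)$ and $u(T) \in L^2(\Omega, \F_T; H)$ of $\E\{\langle u(T), v(T)\rangle - \langle a, v(0)\rangle - \ell(a, u(T))\}$ equals $\E \ell^*(-v(0), v(T)) = \E\ell(v(0), v(T))$ by the Bolza-type condition \eqref{boundary-ell}. Summing the three contributions recovers $\mathcal{L}(v, q)$, establishing self-duality on $\mathcal{A}^2_H \times \mathcal{D}$; the partial self-duality $\mathcal{L}^*(0, v) = \mathcal{L}(v, 0)$ is then the special case $q = 0$.

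The principal obstacle is justifying the decoupling of the three blocks, since $\tilde u$ and $F_u$ appear in the drift and diffusion sups as well as implicitly in the boundary sup through the constraint $u(T) = u(0) + \int_0^T \tilde u\,ds + \int_0^T F_u\,dW$. Because $q_0 = 0$, the variable $u(0)$ enters the reduced expression only through $u(T)$; combined with the surjectivity of the map $(u(0), \tilde u, F_u) \mapsto u(T)$ onto $L^2(\Omega, \F_T; H)$ (a consequence of the martingale representation theorem), this provides enough flexibility to let $u(T)$ range independently over $L^2(\Omega, \F_T; H)$ without restricting $(\tilde u, F_u)$ in their respective sups. A secondary technical point is commuting $\E$ with inner sups to pass from integral lifts back to the pointwise Legendre transforms of $L$, $M$, and $\ell$; this is standard under the integrability and lower semicontinuity hypotheses of Section \ref{SD}.
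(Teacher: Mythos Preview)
Your proposal is correct and follows essentially the same route as the paper's proof: the same change of variables $(a,w,r)=(u(0)-p_0,\,u-p_1,\,F_u-P)$ (the paper writes $(a,y,J)$), the same application of the It\^o integration-by-parts identity \eqref{int_by_parts} to convert $\E\int\langle u,\tilde v\rangle$ into boundary, drift, and diffusion pieces, and the same final invocation of the self-duality of $L$, $M$, and the Bolza condition on $\ell$. The paper handles the decoupling of the three blocks by asserting the reparametrization $u\in\mathcal{A}^2_H \leftrightarrow (u(T),-\tilde u,-F_u)$ and then letting $(b,r,Z)$ range freely; your observation that $u(0)$ appears only through $u(T)$ once $q_0=0$, together with the surjectivity of $(u(0),\tilde u,F_u)\mapsto u(T)$, is exactly the content of that reparametrization, so at this level of detail the two arguments coincide.

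One remark on your ``principal obstacle'': the claim that $u(T)$ can range over all of $L^2(\Omega,\F_T;H)$ \emph{without restricting $(\tilde u,F_u)$} is slightly stronger than what surjectivity alone gives---for fixed $(\tilde u,F_u)$, the terminal value $u(T)=u(0)+\int\tilde u+\int F_u\,dW$ only sweeps an affine translate of $L^2(\Omega,\F_0;H)$. The paper does not address this either; it simply treats the correspondence $u\mapsto (u(T),-\tilde u,-F_u)$ as an isometry onto the full product and decouples. So your flagging of this point is sharper than the paper's treatment, even if your proposed fix would need a bit more work (or an additional convexity/density argument) to be airtight. For the purposes of matching the paper, your proof is on target.
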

\begin{proof}
Take $(q,v)\in (\mathcal{A}^2_H)^*\times \mathcal{A}^2_H$ with $q$ an element in the dual space identified with the triple $(0, q_1(t), Q(t))$, then
\begin{align*}
\begin{split}
\mathcal{L}^*(q,v)&= \sup_{\substack{u\in \mathcal{A}^2_H \\ \\ p\in (\mathcal{A}^2_H)^*}} \left\{\langle q,u \rangle + \langle v,p\rangle- \mathcal{L}(u,p) \right\}\\
&= \sup_{u\in \mathcal{A}^2_H} \sup_{\substack{p_0\in L^2_H(\Omega) \\ p_1 \in L^2_H(\Omega_T)}} \sup_{P \in L^2_H(\Omega_T)} \E \ \bigg\{\langle p_0, v(0)\rangle + \int_0^T \Big(\langle q_1(t), \tilde{u}(t)\rangle + \langle p_1(t), \tilde{v}(t)\rangle\Big) \, dt \\
& \hspace{3.3cm}+ \, \frac{1}{2} \ \int_0^T \Big(\langle Q(t), F_u(t)\rangle + \langle P(t), G_v(t)\rangle \Big)\, dt \\
& \hspace{3.3cm}- \int_0^T L(u(t)-p_1(t), -\tilde{u}(t)) \, dt -\ell(u(0)-p_0,u(T))\\
& \hspace{3.3cm} - \frac{1}{2} \int_0^T M(F_u(t) -P(t),-F_u(t)) \, dt \bigg\}.
\end{split}
\end{align*}
Make the following  substitutions:
\begin{align*}
u(t)-p_1(t)&=y(t)\in L^2_{H}(\Omega_T)\\
u(0)-p_0&=a\in L^2_H(\Omega)\\
F_u(t)-P(t)&=J(t)\in L^2_H(\Omega_T),
\end{align*}
to obtain
\begin{align*}
\mathcal{L}^*(q,v)= &\sup_{u\in \mathcal{A}^2_H}  \sup_{a\in L^2_H(\Omega)} \sup_{y \in L^2_H(\Omega_T)} \sup_{J \in L^2_H(\Omega_T)} \E \ \Big\{ \langle u(0)-a, v(0)\rangle -\ell(a,u(T)) \\
& \hspace{1cm}+ \int_0^T \Big(\langle q_1(t), \tilde{u}(t)\rangle + \langle u(t)-y(t), \tilde{v}(t)\rangle - L(y(t), -\tilde{u}(t))\Big) \, dt \\
&  + \ \frac{1}{2} \int_0^T \langle Q(t), F_u(t)\rangle + \langle F_u(t)-J(t), G_v(t)\rangle -  M(J(t),-F_u(t))  \, dt \Big\}.
\end{align*}
\normalsize
Use It\^o's formula  (\ref{int_by_parts}) for the processes $u$ and $v$ in $\mathcal{A}^2_H$,
 to get
\begin{align*}
\mathcal{L}^*(q,v)= &\sup_{u\in \mathcal{A}^2_H}  \sup_{a\in L^2_H(\Omega)} \sup_{y \in L^2_H(\Omega_T)} \sup_{J \in L^2_H(\Omega_T)} \E \ \Big\{ \langle a, -v(0)\rangle +\langle u(T),v(T)\rangle -\ell(a,u(T)) \\
& \hspace{1.5cm}+ \int_0^T \langle v(t)-q_1(t), -\tilde{u}(t)\rangle + \langle y(t), -\tilde{v}(t)\rangle - L(y(t), -\tilde{u}(t)) \, dt \\
&  + \frac{1}{2} \int_0^T \langle G_v(t)-Q(t), -F_u(t)\rangle + \langle J(t), -G_v(t)\rangle -  M(J(t),-F_u(t))  \, dt \Big\}.
\end{align*}
In view of the correspondence
\begin{align*}
\nonumber (b,r,Z)& \in L^2(\Omega; H)\times L^2(\Omega_T; H) \times L^2(\Omega_T; H) \\
\nonumber &\mapsto b + \int_0^t r(s) ds+ \int_0^t Z(s) dW(s) \in \mathcal{A}^2_H.\\
\quad\\
\nonumber u \in \mathcal{A}^2_H &\mapsto (u(T), -\tilde u, -F_u) \in L^2(\Omega; H)\times L^2(\Omega_T; H) \times L^2(\Omega_T; H),
\end{align*}
it follows that 
\begin{align*}
\mathcal{L}^*(q,v)&= \sup_{(a,b)\in L^2_H(\Omega) \times L^2_H(\Omega)} \E \ \Big\{ \langle a, -v(0)\rangle +\langle b,v(T)\rangle -\ell(a,b) \Big\} \\
&+ \ \sup_{(y,r) \in L^2_H(\Omega_T) \times L^2_H(\Omega_T)} \E \ \Big\{ \ \int_0^T \langle v(t)-q_1(t),r(t)\rangle + \langle y(t), -\tilde{v}(t)\rangle - L(y(t), r(t)) \, dt \Big\}\\
&+ \frac{1}{2} \sup_{\substack{J\in L^2_ H(\Omega_T) \\ Z\in L^2_ H(\Omega_T)}} \E \ \Big\{\int_0^T \langle G_v(t)-Q(t), Z(t)\rangle + \langle J(t), -G_v(t)\rangle -  M(J(t),Z(t))\, dt \Big\},
\end{align*}
and therefore, 
\begin{align*}
\mathcal{L}^*(q,v)&=   \E \ \ell^*(-v(0),v(T)) + \E  \int_0^T  L^*(-\tilde{v}(t), v(t)-q_1(t)) \, dt \\
&\quad + \frac{1}{2}\, \E \int_0^T  M^*(-G_v(t),G_v(t)-Q(t)) \, dt.
\end{align*}
Now with the self-duality assumptions on $L$ and $M$, and the condition on $\ell$, we have
$\mathcal{L}^*(0,v)= \mathcal{L}(v,0),$ for every $v\in \mathcal{A}^2_H$, 
which means that $\mathcal{L}$ is partially self-dual on $\mathcal{A}^2_H \times (\mathcal{A}^2_H)^*$.
\end{proof}

\section{Variational resolution of stochastic equations driven by additive noise}\label{additive}
For simplicity, we shall work in an $L^2$-setting in $w$ and in time.  
 \subsection{A variational principle on It\^o space}
The following  is now a direct consequence of Theorem \ref{partial_sd_thm} and Theorem \ref{var_prin}.
\begin{prop}\label{main.var}
Let $(\Omega, \F, \F_t, \P)$ be a complete probability space with normal filtration and let $H$ be a Hilbert space. Suppose $L$ and $M$ are $\Omega_T$-dependent self-dual Lagrangians on $H \times H$, and $\ell$ is an $\Omega$-dependent time-boundary Lagrangian on $H \times H$.
 Assume that for some positive $C_1, C_2$ and $C_3$, we have
\begin{align}\label{L-cond}
\begin{split}
\E \int_0^T L(t, v(t),0) \ dt &\leq C_1(1+\Vr v\Vr^2_{L^2_H(\Omega_T)}) \qquad \ntx{for} \  v \in L^2_H(\Omega_T),\\
\E \ \ell(a,0)&\leq C_2(1+\Vr a \Vr^2_{L^2_H(\Omega)}) \qquad \ \ntx{for} \ a \in L^2_H(\Omega),\\
\E \int_0^T M(\sigma(t),0) \ dt &\leq C_3(1+\Vr \sigma \Vr^2_{L^2_H(\Omega_T)}) \qquad \ntx{for} \ \sigma \in L^2_H(\Omega_T).
\end{split}
\end{align}
Consider on  $\mathcal{A}^2_H$ the functional
\begin{align*}
{I}(u)=
\E \, \Big\{\int_0^T &L(t,u(t), -\tilde{u}(t)) \, dt+ \ell(u(0),u(T))+ \frac{1}{2} \int_0^T M(F_u(t),-F_u(t)) \, dt \Big\}.
\end{align*} 
Then, there exists $v \in \mathcal{A}^2_H$ such that 
$I(v)=\underset{u\in \mathcal{A}^2_H}\inf I(u)=0,$ and consequently, $\P$-a.s. and for almost all $t\in [0,T]$, we have
\begin{equation}\label{minimize}
-\tilde{v}(t) \in \bar{\partial} L(t,v(t))\\
\end{equation}
$$ (-v(0),v(T))\in \partial \ell(v(0),v(T))$$
$$-F_v(t)\in \bar{\partial}M(F_v(t)).$$
Moreover,  if $L$ is strictly convex, then $v$ is unique.
 \end{prop}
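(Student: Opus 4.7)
The plan is to recognize that this proposition is essentially the composition of the previous two results: the Lagrangian $\mathcal L$ from Theorem \ref{partial_sd_thm} satisfies $\mathcal L(u,0)=I(u)$, so once $\mathcal L$ is known to be partially self-dual on $\mathcal A^2_H\times(\mathcal A^2_H)^*$, it suffices to verify the coercivity assumption of Theorem \ref{var_prin} on the functional $u\mapsto I(u)$, apply that theorem to produce a minimizer $v$ with $I(v)=0$, and then unpack that null-minimum into the three announced inclusions.

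First I would lower-bound the three pieces of $I$ using only the self-duality of $L$, $M$ and the twisted self-duality $\ell(a,b)=\ell^*(-a,b)$ together with the upper bounds in (\ref{L-cond}). The Fenchel inequality combined with self-duality gives, for any $s\in H$,
\begin{equation*}
L(t,u,p)\ge \langle s,p\rangle - L(t,s,0)\ge \langle s,p\rangle - C_1(1+\Vr s\Vr_H^2),
\end{equation*}
and maximising over $s$ yields $L(t,u,p)\ge \frac{1}{4C_1}\Vr p\Vr_H^2-C_1$. The identical argument gives $M(t,G,-G)\ge \frac{1}{4C_3}\Vr G\Vr_H^2-C_3$, and writing $\ell(a,b)\ge\langle a,c\rangle-\ell(-c,0)\ge \langle a,c\rangle-C_2(1+\Vr c\Vr_H^2)$ and optimising over $c$ produces $\ell(a,b)\ge \frac{1}{4C_2}\Vr a\Vr_H^2-C_2$. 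Plugging these three lower bounds into $I$ and comparing with
\begin{equation*}
\Vr u\Vr^2_{\mathcal A^2_H}=\E\Vr u(0)\Vr_H^2+\E\int_0^T\Vr\tilde u(t)\Vr_H^2\,dt+\E\int_0^T\Vr F_u(t)\Vr_H^2\,dt
\end{equation*}
gives $I(u)\ge c\Vr u\Vr^2_{\mathcal A^2_H}-C$, which is more than enough for the coercivity hypothesis of Theorem \ref{var_prin}. Combined with Theorem \ref{partial_sd_thm} (and the remark after Theorem \ref{var_prin} that partial self-duality suffices), this produces $v\in\mathcal A^2_H$ with $I(v)=\inf I=0$.

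Next I would extract the three inclusions from the equation $I(v)=0$ by a standard splitting argument. Self-duality of $L$ and $M$ gives pointwise $L(t,v,-\tilde v)\ge \langle v,-\tilde v\rangle$ and $M(t,F_v,-F_v)\ge -\Vr F_v\Vr_H^2$, while $\ell(a,b)\ge\tfrac12(\Vr b\Vr_H^2-\Vr a\Vr_H^2)$ follows from the Fenchel inequality applied at $(c,d)=(-a,b)$ combined with $\ell=\ell^*(-\cdot,\cdot)$. Using It\^o's formula of Proposition \ref{Ito-thm} one has
\begin{equation*}
\E\int_0^T\langle v(t),-\tilde v(t)\rangle_H\,dt=\tfrac12\E\Vr v(0)\Vr_H^2-\tfrac12\E\Vr v(T)\Vr_H^2+\tfrac12\E\int_0^T\Vr F_v(t)\Vr_H^2\,dt,
\end{equation*}
and adding the three lower bounds makes all boundary and $\Vr F_v\Vr^2$ contributions cancel, giving $I(v)\ge 0$. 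Equality $I(v)=0$ therefore forces equality in each of the three Fenchel inequalities a.e.\ in $(\omega,t)$, which by the very definition of $\bar\partial L$, $\bar\partial M$ and $\partial\ell$ translates exactly into $-\tilde v(t)\in\bar\partial L(t,v(t))$, $-F_v(t)\in\bar\partial M(F_v(t))$, and $(-v(0),v(T))\in\partial\ell(v(0),v(T))$.

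The main technical worry is the coercivity step, since all three quadratic lower bounds are needed simultaneously and must jointly dominate the $\mathcal A^2_H$-norm; the ingredient that makes this work is that (\ref{L-cond}) bounds $L$, $M$, $\ell$ only at one slot equal to zero, but self-duality transports this into a quadratic lower bound at the \emph{other} slot. The uniqueness assertion is routine: if $L$ is strictly convex then the map $u\mapsto \E\int_0^T L(t,u,-\tilde u)\,dt$ is strictly convex on $\mathcal A^2_H$ (the other two summands remain convex), so the minimiser of $I$ is unique.
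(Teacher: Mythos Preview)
Your proof is correct and follows essentially the same architecture as the paper's: identify $I(u)=\mathcal L(u,0)$ with $\mathcal L$ the partially self-dual Lagrangian of Theorem~\ref{partial_sd_thm}, check coercivity, invoke Theorem~\ref{var_prin}, and then split $I(v)=0$ via It\^o's formula into three nonnegative pieces. The only cosmetic difference is in the coercivity step: the paper verifies it by showing $p\mapsto\mathcal L(0,p)$ is bounded on bounded sets of $(\mathcal A^2_H)^*$ and then appeals to duality, whereas you carry out that same duality explicitly on each of $L$, $M$, $\ell$ to obtain direct quadratic lower bounds---just be careful that the bounds in (\ref{L-cond}) are stated at the level of $L^2$-expectations rather than pointwise, so your Fenchel optimisation should be performed in $L^2_H(\Omega_T)$ (resp.\ $L^2_H(\Omega)$) rather than in $H$.
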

\begin{proof} 
The functional  $I$ can be written as $I(u)=\msL(u,0)$, where $\msL$ is the partially self-dual Lagrangian defined by (\ref{partial_sdLag_onA2}).
In order to apply Theorem \ref{var_prin}, we need to verify the coercivity condition. To this end, we use Conditions (\ref{L-cond}) to show that the map $p\rightarrow \msL(0,p)$ is bounded on the bounded sets of $(\mathcal{A}^2_H)^*$. Indeed, 
\begin{align*}
\msL(0,p)&= \E \, \Big\{\int_0^T L(t, p_1(t), 0) \, dt+ \ell(-p_0,0) + 
\frac{1}{2} \int_0^T M(-P(t),0) \, dt \Big\}\\
&\leq C \, \Big(3+\Vr p_1\Vr^2_{L^2_H(\Omega_T)}+\Vr p_0 \Vr^2_{L^2_H(\Omega)}+\Vr P \Vr^2_{L^2_H(\Omega_T)}\Big),
\end{align*}
and by duality, $\underset{\Vr u \Vr \rightarrow \infty} \lim \dfrac{\msL(u,0)}{\Vr u \Vr}= +\infty$.
By Theorem \ref{var_prin}, there exists $v\in \mathcal{A}^2_H$ such that $I(v)=0$. We now rewrite $I$ as follows:
\begin{align*}
0={I}(v)=
\E \ \Big\{\int_0^T& L(t,v(t), -\tilde{v}(t))+\langle v(t), \tilde{v}(t) \rangle \, dt - \int_0^T \langle v(t), \tilde{v}(t) \rangle \, dt \\
&+\ell(v(0),v(T)) + 
\frac{1}{2} \int_0^T M(F_v(t),-F_v(t)) \, dt \Big\}.
\end{align*}
\noindent By It\^o's formula
\begin{align*}\label{itoo}
\E \int_0^T \langle v(t), \tilde{v}(t) \rangle = \frac{1}{2} \, \E \Vr v(T)\Vr^2_H - \frac{1}{2} \, \E \Vr v(0)\Vr^2_H - \frac{1}{2} \,\E \int_0^T \Vr F_v(t)\Vr^2_H \, dt,
\end{align*}\label{itoo}
which yields
\begin{align*}
0 \ = \ {I}(v)=& \ 
\E \ \Big\{\ \int_0^T \Big(L(t,v(t), -\tilde{v}(t))+\langle v(t), \tilde{v}(t) \rangle\Big) \ dt \Big\} \\
&+ \E \, \Big\{\ell(v(0),v(T)) -\frac{1}{2} \Vr v(T)\Vr^2_H +\frac{1}{2} \Vr v(0)\Vr^2_H\Big\} \\
&+\frac{1}{2} \, \E \, \Big\{\int_0^T \Big(\Vr F_v\Vr^2_H+ M(F_v(t),-F_v(t))\Big)\, dt  \Big\}.
\end{align*}
The self-duality of the Lagrangians $L$ and $M$ and the hypothesis on the boundary Lagrangian, yield that for a.e. $t\in [0,T]$ and $\mathbb{P}$-a.s. each of the integrands inside the curly-brackets are non-negative, thus
$$L(t,v(t), -\tilde{v}(t))+\langle v(t), \tilde{v}(t) \rangle=0,$$
$$\ell(v(0),v(T)) -\frac{1}{2} \Vr v(T)\Vr^2_H +\frac{1}{2} \Vr v(0)\Vr^2_H=0,$$
$$M(F_v(t),-F_v(t))+\langle F_v, F_v\rangle=0 ,$$
which translate into the three assertions in (\ref{minimize}).\\
Finally, if $L$ is strictly convex, then the functional $I$ is strictly convex and the minimum is attained uniquely.
\end{proof}
\subsection{Regularization via inf-involution}
The boundedness condition 
(\ref{L-cond}) is quite restrictive and not satisfied by most Lagrangians of interest. One way to deal with such a difficulty is to assume similar bounds on $L$ but in stronger Banach norms. Moreover, we need to find more regular solutions that are valued in more suitable Banach spaces than $H$.   
To this end, we consider an evolution triple $V\subset H \subset V^*$, where $V$ is a  
 reflexive Banach space and $V^*$ is its dual. We recall the following easy lemma from \cite{G-book}.
 \begin{lem}\label{boundedness-L}
Let $L$ be a self-dual Lagrangian on $V \times V^*$.
\begin{enumerate}
\item If for some $r>1$ and $C>0$, we have
$L(u,0)\leq C(1+\Vr u\Vr_V^{r})$ for all $u\in V,$
then there exists $D>0$ such that
$L(u,p) \geq D(\Vr p\Vr_{V^*}^{s}-1)$ for all  $(u,p)\in V\times V^*$, where  $\frac{1}{r}+\frac{1}{s}=1.$
\item If for $C_1, C_2>0$ and $r_1\geq r_2>1$, we have  
$$C_2(\Vr u\Vr_V^{r_2}-1)\leq L(u,0) \leq C_1(1+\Vr u\Vr_V^{r_1}) \qquad \ntx{for all} \ u\in V,$$
then, there exists $D_1, D_2>0$ such that
\begin{equation}
D_2(\Vr p\Vr_{V^*}^{s_1}+\Vr u\Vr_V^{r_2}-1)\leq L(u,p) \leq D_1(1+\Vr u\Vr_V^{r_1}+\Vr p\Vr_{V^*}^{s_2}).
\end{equation}
where $\frac{1}{r_i}+\frac{1}{s_i}=1$ for $i=1,2$, and therefore $L$ is continuous in both variables.
\end{enumerate}
\end{lem}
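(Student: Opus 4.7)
The plan is to derive both parts from Fenchel--Young inequalities applied to the self-duality identity $L=L^{*}$, and to obtain continuity as a standard consequence of convex analysis. For Part~(1), I will unfold the definition
\[
L(u,p)=L^{*}(p,u)=\sup_{(v,q)\in V\times V^{*}}\{\langle p,v\rangle+\langle u,q\rangle-L(v,q)\},
\]
and restrict the supremum to $q=0$, obtaining $L(u,p)\geq \sup_{v}\{\langle p,v\rangle-L(v,0)\}\geq \sup_{v}\{\langle p,v\rangle-C(1+\Vr v\Vr_{V}^{r})\}$. Young's inequality on the radial profile then evaluates the last supremum to $K\Vr p\Vr_{V^{*}}^{s}$ for an explicit constant $K=K(C,r)>0$, and the claimed lower bound $L(u,p)\geq D(\Vr p\Vr_{V^{*}}^{s}-1)$ follows upon absorbing $C$ into $D$.

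For Part~(2), the $\Vr p\Vr^{s_{1}}$-piece of the lower bound is immediate from Part~(1) applied with $r=r_{1}$. For the $\Vr u\Vr^{r_{2}}$-piece, my strategy is first to control $L(0,p)$ from above and then to apply Part~(1) in the swapped variables. Using self-duality and manipulating sup and inf one verifies the identity $L(0,p)=m^{*}(p)$, where $m(v):=\inf_{q\in V^{*}}L(v,q)$. Combining the given lower bound $L(v,0)\geq C_{2}(\Vr v\Vr_{V}^{r_{2}}-1)$ with the $q$-coercivity already provided by Part~(1), namely $L(v,q)\geq D_{1}'\Vr q\Vr^{s_{1}}-C_{1}$, should force the infimum defining $m(v)$ to be realized on a bounded set of $q$'s, and joint convexity then transfers the $\Vr v\Vr^{r_{2}}$-growth from the slice $q=0$ to that bounded region, yielding $m(v)\geq C_{2}'(\Vr v\Vr^{r_{2}}-1)$. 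The Young--Fenchel transform then gives $L(0,p)=m^{*}(p)\leq K'(1+\Vr p\Vr^{s_{2}})$, and plugging this into the symmetric form of Part~(1) (restricting the $L^{*}$-sup to $v=0$) produces $L(u,p)\geq D''(\Vr u\Vr^{r_{2}}-1)$. Averaging with the $\Vr p\Vr^{s_{1}}$-estimate produces the full two-term lower bound.

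The upper bound will then be obtained by feeding the lower bound $L(v,q)\geq D_{2}(\Vr v\Vr^{r_{2}}+\Vr q\Vr^{s_{1}}-1)$ back through self-duality:
\[
L(u,p)=L^{*}(p,u)\leq D_{2}+\sup_{v}\{\langle p,v\rangle-D_{2}\Vr v\Vr^{r_{2}}\}+\sup_{q}\{\langle u,q\rangle-D_{2}\Vr q\Vr^{s_{1}}\},
\]
and the two Young--Fenchel suprema produce precisely the $\Vr p\Vr^{s_{2}}$ and $\Vr u\Vr^{r_{1}}$ contributions, giving the desired upper bound. Continuity of $L$ on $V\times V^{*}$ is then automatic from the classical fact that a proper convex lower semicontinuous function which is bounded above on a neighborhood of an interior point of its effective domain is continuous there.

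The main obstacle I anticipate is the estimate $m(v)\geq C_{2}'(\Vr v\Vr^{r_{2}}-1)$. Naively the hypothesis $L(v,0)\geq C_{2}(\Vr v\Vr^{r_{2}}-1)$ controls $L$ only at $q=0$, whereas $m(v)$ is an infimum over \emph{all} $q$ and could \emph{a priori} be much smaller. The key is that Part~(1) forces $L(v,q)\geq D_{1}'\Vr q\Vr^{s_{1}}-C_{1}$ \emph{uniformly in $v$}, so any $q$ with $L(v,q)$ close to $m(v)$ is confined to a norm-bounded set depending only on $m(v)$; combining this compactness in $q$ with joint convexity of $L$ and the $\Vr v\Vr^{r_{2}}$-bound at $q=0$ is what delivers the uniform $\Vr v\Vr^{r_{2}}$-growth of $m$. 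Making this transfer rigorous is the technical heart of the argument.
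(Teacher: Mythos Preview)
The paper does not actually prove this lemma: it is quoted as ``an easy lemma from \cite{G-book}'' and no argument is given. So there is nothing to compare against, and the question is simply whether your plan is sound.

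Your Part~(1) is correct and standard: restricting the sup in $L(u,p)=L^{*}(p,u)$ to $q=0$ and using $L(v,0)\leq C(1+\Vr v\Vr^{r})$ gives exactly $L(u,p)\geq K\Vr p\Vr^{s}-C$. Likewise, once the two-sided bound in Part~(2) is available, your derivation of the upper bound via $L(u,p)=L^{*}(p,u)\leq D_{2}+\sup_{v}\{\langle p,v\rangle-D_{2}\Vr v\Vr^{r_{2}}\}+\sup_{q}\{\langle u,q\rangle-D_{2}\Vr q\Vr^{s_{1}}\}$ is correct, and continuity then follows from local boundedness of a convex function.

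The genuine gap is exactly where you flag it: the inequality $m(v):=\inf_{q}L(v,q)\geq C_{2}'(\Vr v\Vr^{r_{2}}-1)$. Your proposed mechanism --- that the $q$-coercivity from Part~(1) confines the minimizing $q$ to a bounded set, and joint convexity then ``transfers'' the $\Vr v\Vr^{r_{2}}$-growth from the slice $q=0$ --- does not work. First, the minimizing $q_{v}$ is only confined to $\Vr q_{v}\Vr^{s_{1}}\lesssim 1+\Vr v\Vr^{r_{1}}$, a ball whose radius grows with $\Vr v\Vr$, not a fixed bounded set. Second, and more seriously, the three ingredients you cite (joint convexity, $L(v,q)\geq D'\Vr q\Vr^{s_{1}}-C$, and $L(v,0)\geq C_{2}\Vr v\Vr^{r_{2}}-C$) do \emph{not} imply the claimed lower bound on $m$. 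A concrete obstruction: on $\R\times\R$ the jointly convex function $L_{a}(v,q)=(v-aq)^{2}+q^{2}$ satisfies $L_{a}(v,0)=v^{2}$ and $L_{a}(v,q)\geq q^{2}$ with constants independent of $a$, yet $m_{a}(v)=v^{2}/(a^{2}+1)$, so no uniform $\Vr v\Vr^{2}$-lower bound on $m$ follows from those inputs alone. Of course $L_{a}$ is not self-dual, which is precisely the point: self-duality must be invoked \emph{again} at this step, not just through Part~(1), and your sketch does not indicate how. Equivalently, since $L(0,q)=m^{*}(q)$, what you need is the upper bound $L(0,q)\leq C(1+\Vr q\Vr^{s_{2}})$, and this is the step that has to be supplied by an additional self-duality argument rather than by convexity and coercivity alone.
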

\begin{prop}\label{triple-thm}
Consider a Gelfand triple $V\subset H\subset V^*$ and let $L$ be an $\Omega_T$-dependent self-dual Lagrangian on $V \times V^*$. Let $M$ be an $\Omega_T$-dependent self-dual Lagrangian on $H\times H$, and $\ell$ an $\Omega$-dependent boundary Lagrangian on $H \times H$ satisfying $\ell^*(a,b)=\ell(-a,b)$. Assume the following conditions hold:
\begin{description}
\item [($A_1$)] For some $m,n >1$, $C_1,C_2>0$, 
\begin{equation*}
C_2(\Vr v\Vr^m_{L^2_V(\Omega_T)}-1)\leq \E \int_0^T L(t, v(t),0) \ dt \leq C_1(1+\Vr v\Vr^n_{L^2_V(\Omega_T)})\quad \ntx{for all} 
\ v \in L^2(\Omega_T;V).
\end{equation*}
\item[($A_2$)] For some $C_3>0$, 
\begin{equation*}
\E \ \ell(a,b)\leq C_3(1+\Vr a \Vr^2_{L^2_H(\Omega)}+\Vr b \Vr^2_{L^2_H(\Omega)})\quad \ntx{for all} 
\ a,b \in L^2(\Omega;H).
\end{equation*}
\item[($A_3$)] For some $C_4>0$, 
\begin{equation*}
\E \int_0^T M(G_1(t),G_2(t)) dt \leq C_4(1+\Vr G_1 \Vr^2_{L^2_H(\Omega_T)}+\Vr G_2 \Vr^2_{L^2_H(\Omega_T)}) \quad\ntx{for all} 
\ G_1,G_2 \in L^2_H(\Omega_T).
\end{equation*}
\end{description}
Then, there exists $v\in \mathcal{A}_H^2$ with trajectories in $L^2(\Omega_T;V)$ such that $\tilde{v}\in L^2(\Omega_T;V^*)$, at which the minimum of the following functional is attained and is equal to $0$.
\begin{align*}
{I}(u)=
\E \, \Big\{\int_0^T &L(t,u(t), -\tilde{u}(t) \, dt+ \ell(u(0),u(T))
+ \frac{1}{2} \int_0^T M(F_u(t),-F_u(t)) \, dt \Big\}.
\end{align*}
 Consequently, $\P$-a.s. and for almost all $t\in [0,T]$, we have
\begin{equation}\label{minimize1}
-\tilde{v}(t) \in \bar{\partial} L(t,v(t))\\
\end{equation}
$$ (-v(0),v(T))\in \partial \ell(v(0),v(T))$$
$$-F_v(t)\in \bar{\partial}M(F_v(t)).$$
\end{prop}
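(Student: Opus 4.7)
The plan is to reduce the statement to Proposition \ref{main.var} by constructing a regularized problem that satisfies its hypotheses, and then pass to the limit. The obstruction is that condition $(A_1)$ controls $L$ only in the $V$-norm, whereas \eqref{L-cond} demands control in the $H$-norm. I would therefore introduce a Moreau-Yosida-type inf-convolution regularization of $L$ in the spirit of Chapter 3 of \cite{G-book}, as the section heading ``Regularization via inf-involution'' suggests.

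First, extend $L$ to $H \times H$ by setting it equal to $+\infty$ off $V \times V^*$; by Lemma \ref{lift-to-H}, applied pointwise in $(\omega,t)$ using the coercive lower bound on $L(\cdot,0)$ supplied by $(A_1)$ together with Lemma \ref{boundedness-L}, the extension remains self-dual on $H \times H$. For $\lambda > 0$, consider the quadratic self-dual Lagrangian $N_\lambda(u,p) = \frac{1}{2\lambda}\Vr u \Vr_H^2 + \frac{\lambda}{2}\Vr p \Vr_H^2$ on $H \times H$, and let $L_\lambda$ be the self-dual regularization of $L$ at scale $\lambda$, obtained by an inf-convolution scheme to which Lemma \ref{conv} applies, so that $L_\lambda$ is self-dual on $H \times H$ and satisfies a quadratic $H$-bound of the form $L_\lambda(u,0) \leq C_\lambda(1+\Vr u\Vr_H^2)$. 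Applying Proposition \ref{main.var} to $(L_\lambda, M, \ell)$ then yields $v_\lambda \in \mathcal{A}^2_H$ with $I_\lambda(v_\lambda)=0$ and the three inclusions \eqref{minimize} holding for $L_\lambda$ in place of $L$.

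Next, I would derive $\lambda$-uniform a priori bounds. Using the pointwise identity $L_\lambda(v_\lambda,-\tilde v_\lambda) + \langle v_\lambda,\tilde v_\lambda\rangle = 0$, the coercive lower bound on $L_\lambda$ inherited from $(A_1)$, It\^o's formula applied to $\Vr v_\lambda(t)\Vr_H^2$, and the $H$-bounds in $(A_2)$ and $(A_3)$, one obtains uniform bounds on $v_\lambda$ in $L^2(\Omega_T;V)$, on $\tilde v_\lambda$ in $L^2(\Omega_T;V^*)$, on $F_{v_\lambda}$ in $L^2_H(\Omega_T)$, and on $v_\lambda(0),v_\lambda(T)$ in $L^2_H(\Omega)$. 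Extract a subsequence so that $v_\lambda \rightharpoonup v$ weakly in $L^2(\Omega_T;V)$ and $\tilde v_\lambda \rightharpoonup \tilde v$ weakly in $L^2(\Omega_T;V^*)$; the additional It\^o-space components converge weakly in the respective $L^2_H$ spaces, so that $v \in \mathcal{A}^2_H$ with trajectories in $L^2(\Omega_T;V)$ and $\tilde v \in L^2(\Omega_T;V^*)$.

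To conclude, pass to the limit: the convex lifted functional $I$ is weakly lower semi-continuous, and the Mosco-type convergence $L_\lambda \to L$ as $\lambda\to 0$ gives $\liminf_\lambda I_\lambda(v_\lambda) \geq I(v)$, hence $I(v)\leq 0$. Since $I$ is non-negative by the partial self-duality established in Theorem \ref{partial_sd_thm}, we must have $I(v)=0$, and the three equalities in \eqref{minimize1} follow by exactly the same decomposition argument as in the proof of Proposition \ref{main.var}: after adding and subtracting $\E\int_0^T\langle v,\tilde v\rangle dt$ and applying It\^o's formula, $I(v)$ becomes a sum of three non-negative integrands, each of which must vanish. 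The main technical difficulty is isolating the precise mode of convergence of $L_\lambda$ to $L$ that is compatible with the weak convergence of $(v_\lambda,\tilde v_\lambda)$ in $L^2_V(\Omega_T)\times L^2_{V^*}(\Omega_T)$; this is what fixes the exact form of the inf-convolution used in the definition of $L_\lambda$ and is the delicate analytical step in the whole argument.
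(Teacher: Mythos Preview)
Your overall strategy---lift $L$ to $H\times H$, inf-convolve with the quadratic $N_\lambda$, apply Proposition~\ref{main.var}, extract weak limits, and conclude by lower semi-continuity plus the Itô decomposition---is exactly the paper's. The one place where your sketch does not match and would not go through as written is the a priori estimate step.

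You write that the uniform bound on $v_\lambda$ in $L^2(\Omega_T;V)$ follows from ``the coercive lower bound on $L_\lambda$ inherited from $(A_1)$''. But $L_\lambda$ does \emph{not} inherit a $\lambda$-uniform $V$-coercive lower bound: for any $u\in H\setminus V$ the infimum defining $L_\lambda(u,0)$ is finite (take any $z\in V$), so no inequality of the form $L_\lambda(u,0)\geq c\|u\|_V^m - C$ can hold. What the paper does instead is introduce the resolvent $J_\lambda(v_\lambda)\in H$ at which the infimum in $L_\lambda(v_\lambda,-\tilde v_\lambda)$ is attained, so that
\[
L_\lambda(v_\lambda,-\tilde v_\lambda)=L(J_\lambda(v_\lambda),-\tilde v_\lambda)+\frac{1}{2\lambda}\Vr v_\lambda-J_\lambda(v_\lambda)\Vr_H^2+\frac{\lambda}{2}\Vr\tilde v_\lambda\Vr_H^2.
\]
Plugging this into $I_\lambda(v_\lambda)=0$ and using $(A_1)$ together with Lemma~\ref{boundedness-L} gives uniform bounds on $J_\lambda(v_\lambda)$ in $L^2(\Omega_T;V)$ and on $\tilde v_\lambda$ in $L^2(\Omega_T;V^*)$; the bounds on $F_{v_\lambda}$, $v_\lambda(0)$, $v_\lambda(T)$ come from $(A_2)$, $(A_3)$ as you say. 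The extra term $\frac{1}{2\lambda}\Vr v_\lambda-J_\lambda(v_\lambda)\Vr_H^2$ is then itself bounded (since all other terms in $I_\lambda(v_\lambda)=0$ are bounded below), which forces $\Vr v_\lambda-J_\lambda(v_\lambda)\Vr_{L^2_H}^2\leq 2\lambda C\to 0$. Hence $v_\lambda$ and $J_\lambda(v_\lambda)$ have the same weak limit $v\in L^2(\Omega_T;V)$, and the passage to the limit uses only the lower semi-continuity of $L$, $\ell$, $M$ applied to $(J_\lambda(v_\lambda),-\tilde v_\lambda)$ rather than any abstract Mosco convergence of $L_\lambda$ to $L$. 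This replaces the ``delicate analytical step'' you flag at the end with something entirely explicit.
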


\begin{proof}
First, apply Lemma \ref{lift-to-H} to lift $L$ to an $\Omega_T$-dependent self-dual Lagrangian on $H\times H$, then consider for $t\in [0,T]$ and $\P$-a.s., the $\lambda$-regularization of $L$, that is 
$$L_\lambda(u,p)=\inf_{z\in H}\Big\{L(z,p)+\frac{\Vr u-z\Vr^2_H}{2\lambda}+\frac{\lambda}{2}\Vr p\Vr^2_H \Big\}.$$
By Lemma \ref{conv}, $L_\lambda$ is also an $\Omega_T$-dependent self-dual Lagrangian on $H\times H$ in such a way that the conditions (\ref{L-cond}) of Proposition \ref{main.var} hold. Hence, there exists $v_\lambda \in \mathcal{A}_H^2$ such that 
\begin{align*}
0=\E \ \Big\{\int_0^T L_\lambda(v_\lambda(t), -\tilde{v}_\lambda(t))\, dt +\ell(v_\lambda(0),v_\lambda(T)) + 
\frac{1}{2} \int_0^T M(F_{v_\lambda}(t),-F_{v_\lambda}(t)) \, dt \Big\}.
\end{align*}
Since $L$ is convex and lower semi-continuous, then $dt\otimes \P$ a.s, there exists $J_\lambda(v_\lambda) \in H$ so that
$$L_\lambda(v_\lambda(t),-\tilde{v}_\lambda(t))=L(J_\lambda(v_\lambda)(t),-\tilde{v}_\lambda(t))+\frac{\Vr v_\lambda(t)-J_\lambda(v_\lambda)(t)\Vr^2_H}{2\lambda}+\frac{\lambda}{2}\Vr \tilde{v}_\lambda(t)\Vr^2_H,$$
and hence
\begin{align}\label{lambda-inf0}
\nonumber 0=\E \ \Big\{\int_0^T &\Big(L(J_\lambda(v_\lambda)(t),-\tilde{v}_\lambda(t))+\frac{\Vr v_\lambda(t)-J_\lambda(v_\lambda)(t)\Vr_H^2}{2\lambda}+\frac{\lambda}{2}\Vr \tilde{v}_\lambda(t)\Vr^2_H\Big)\, dt \\
&+\ell(v_\lambda(0),v_\lambda(T)) + 
\frac{1}{2} \int_0^T M(F_{v_\lambda}(t),-F_{v_\lambda}(t)) \, dt \Big\}.
\end{align}
From (\ref{lambda-inf0}), condition ($A_1$) and the assertion of part (2) of Lemma \ref{boundedness-L}, we can deduce that $J_\lambda(v_\lambda)$ is bounded in $L^2(\Omega_T;V)$ and $\tilde{v}_\lambda$ is bounded in $L^2(\Omega_T;V^*)$. Also from condition ($A_2$) and ($A_3$), we can deduce the following estimates:
\[
\E\, \int_0^T M(G, H)\, dt \geq C(\|G\|^2_{L^2_H(\Omega_T)}-1)\quad {\rm and} \quad \E \, \ell (a, b) \geq C(\|b\|^2_{L^2_H(\Omega)}-1).
\]
These coercivity properties, together with (\ref{lambda-inf0}), imply that $v_\lambda(0)$ and $v_\lambda(T)$ are bounded in $L^2(\Omega;H)$, and that  $F_{v_\lambda}$ is bounded in $L^2(\Omega_T;H)$. Moreover, since all other terms in (\ref{lambda-inf0}) are bounded below, it follows that 
$$\E \int_0^T \Vr v_\lambda(t)-J_\lambda(v_\lambda)(t)\Vr^2 dt \leq 2\lambda C \quad \ntx{for some} \ C>0.$$
Hence $v_\lambda$ is bounded in $\mathcal{A}^2_H$ and there exists a subsequence $v_{\lambda_j}$ that converges weakly to a path $v\in L^2(\Omega_T;V)$ such that $\tilde{v} \in L^2(\Omega_T;V^*)$, and 
$$J_{\lambda_j}(v_{\lambda_j}) \rightharpoonup v \quad \ntx{in} \quad L^2(\Omega_T;V)$$
$$\tilde{v}_{\lambda_j} \rightharpoonup \tilde{v} \quad \ntx{in} \quad L^2(\Omega_T;V^*)$$
$$v_{\lambda_j} \rightharpoonup v \quad \ntx{in} \quad L^2(\Omega_T;H)$$
$$v_{\lambda_j}(0) \rightharpoonup v(0), \quad v_\lambda(T) \rightharpoonup v(T) \quad \ntx{in} \quad L^2(\Omega;H)$$
$$F_{v_{\lambda_j}} \rightharpoonup F_v \quad \ntx{in} \quad L^2(\Omega_T;H).$$
Since $L, \ell$ and $M$ are lower semi-continuous, we have
\begin{align*}
I(v) &\leq \liminf_{j} \E \ \Big\{\int_0^T \Big(L(J_{\lambda_j}(v_{\lambda_j})(t),-\tilde{v}_{\lambda_j}(t))+\frac{\Vr v_{\lambda_j}(t)-J_{\lambda_j}(v_{\lambda_j})(t)\Vr^2}{2{\lambda_j}}+\frac{{\lambda_j}}{2}\Vr \tilde{v}_{\lambda_j}(t)\Vr^2\Big)\, dt \\
& \qquad \qquad \qquad+\ell(v_{\lambda_j}(0),v_{\lambda_j}(T)) + 
\frac{1}{2} \int_0^T M(F_{v_{\lambda_j}}(t),-F_{v_{\lambda_j}}(t)) \, dt \Big\}=0.
\end{align*}
For the reverse inequality, we use the self-duality of $L$ and $M$ and the fact that $\ell(-a,b)=\ell^*(a,b)$ to deduce that
\begin{align*}
{I}(v)=& \ 
\E \ \Big\{\ \int_0^T \Big(L(v(t), -\tilde{v}(t))+\langle v(t), \tilde{v}(t) \rangle\Big) \ dt \Big\} \\
&+ \E \, \Big\{\ell(v(0),v(T)) -\frac{1}{2} \Vr v(T)\Vr^2_H +\frac{1}{2} \Vr v(0)\Vr^2_H\Big\} \\
&+\frac{1}{2} \, \E \, \Big\{\int_0^T \Big(\Vr F_v\Vr^2_H+ M(F_v(t),-F_v(t))\Big)\, dt  \Big\}\geq 0.
\end{align*}
Therefore, $I(v)=0$ and the rest of the proof is similar to the last part of the proof in Proposition \ref{main.var}.
\end{proof}
We now deduce the following.
\begin{thm} \label{one-sol} Consider a Gelfand triple $V\subset H\subset V^*$, and let $A: D(A)\subset V\to V^*$ be an $\Omega_T$-dependent progressively measurable maximal monotone operator satisfying 
\begin{align*}\label{A-cond2}
 \langle A_{w,t}u,u\rangle \geq \max \{  c_1(\omega,t)\Vr u \Vr_V^\alpha -m_1(\omega,t), \, c_2(\omega,t)\Vr Au \Vr^\beta_{V^*}-m_2(\omega,t)\}, 
\end{align*}
where $c_1, c_2 \in L^\infty(\Omega_T, dt \otimes \P)$ and $m_1, m_2 \in L^1(\Omega_T, dt \otimes \P)$.
  Let $B$ be a given $H$-valued progressively measurable process in $L^2(\Omega_T;H)$, and $u_0$ a given random variable in $L^2(\Omega,\F_0,\mathbb{P};H)$.  Then, the equation  
\begin{equation}\label{basic.2}
\begin{cases}
du(t)= -A (t,u(t)) dt + B(t) dW(t)\\
u(0)=u_0,
\end{cases}
\end{equation}
has a solution $u\in {\cal A}^2_H$ that is valued in $V$. It can be obtained by minimizing the functional 
\begin{align*}
I(u)& =\  \E \int_0^T L(u(t), -\tilde{u}(t))\, dt \\
& \quad+ \E\ \Big(\frac{1}{2}  \Vr u(0)\Vr^2_H+ \frac{1}{2}  \Vr u(T)\Vr^2_H -2\langle u_0, u(0)\rangle_H +\Vr u_0\Vr^2_H\Big) \\
&\quad + \E \int_0^T\Big(\frac{1}{2} \, \Vr F_u(t)-2B(t)\Vr_H^2\,+ \,\frac{1}{2} \, \Vr F_u(t) \Vr_H^2 -2 \langle F_u(t),B(t) \rangle_H \Big) \, dt,
\end{align*}
where $L$ is a self-dual Lagrangian such that $\bar \partial L(t, \cdot)=A(t, \cdot)$, $\P$-almost surely.
\end{thm}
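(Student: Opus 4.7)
The strategy is to recognize the functional $I$ as an instance of Proposition~\ref{triple-thm} applied to three carefully chosen Lagrangians, and then read off the SPDE from the minimum value being zero.

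\textbf{Choice of Lagrangians.} First I would apply Theorem~\ref{Fitz} together with the measurable selection procedure described in Section~\ref{mon.rep} to associate to the progressively measurable maximal monotone field $A_{\omega,t}$ a progressively measurable $\Omega_T$-dependent self-dual Lagrangian $L$ on $V\times V^*$ such that $\bar\partial L(t,\cdot)=A(t,\cdot)$ almost surely. Next, define the time-boundary Lagrangian
\[
\ell(a,b):=\tfrac12\|a\|_H^2+\tfrac12\|b\|_H^2-2\langle u_0,a\rangle_H+\|u_0\|_H^2,
\]
and the diffusive Lagrangian $M(G_1,G_2):=\Psi_{B(t)}(G_1)+\Psi_{B(t)}^*(G_2)$ with $\Psi_{B(t)}(G):=\tfrac12\|G-2B(t)\|_H^2$, so that $M$ is automatically self-dual on $H\times H$. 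A direct Legendre-transform calculation (separating variables in $\ell$ and completing the square) gives $\ell^*(-a,b)=\ell(a,b)$.

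\textbf{Verification of hypotheses and application.} By Lemma~\ref{lem-AtoL}, the coercivity assumption on $\langle A_{\omega,t}u,u\rangle$ transfers to two-sided bounds on $L(u,p)$ of the form $C(\|u\|_V^\alpha+\|p\|_{V^*}^\beta-1)\le L(u,p)\le C'(1+\|u\|_V^\alpha+\|p\|_{V^*}^\beta)$, which yields hypothesis $(A_1)$ of Proposition~\ref{triple-thm}. The explicit quadratic forms of $\ell$ and $M$, together with $u_0\in L^2(\Omega;H)$ and $B\in L^2(\Omega_T;H)$, give $(A_2)$ and $(A_3)$ by inspection. Using $\Psi_B^*(G)=\tfrac12\|G\|_H^2+2\langle G,B\rangle_H$ one checks that $\tfrac12 M(F_u,-F_u)$ produces exactly the diffusive integrand displayed in the statement of the theorem, so the functional $I$ of the theorem coincides with the one in Proposition~\ref{triple-thm}. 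That proposition then delivers $v\in\mathcal{A}^2_H$ with trajectories in $L^2(\Omega_T;V)$ and $\tilde v\in L^2(\Omega_T;V^*)$ such that $I(v)=\min I=0$.

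\textbf{Extracting the SPDE.} To finish, apply It\^o's formula to $\|v(t)\|_H^2$ to rewrite $\mathbb{E}\int_0^T\langle v,\tilde v\rangle_H\,dt$ in terms of boundary values and $F_v$, exactly as in the introductory computation. After regrouping, $I(v)=0$ becomes the sum of three nonnegative quantities,
\[
\mathbb{E}\!\int_0^T\!\!\bigl[L(v,-\tilde v)+\langle v,\tilde v\rangle_H\bigr]dt,\quad \mathbb{E}\|v(0)-u_0\|_H^2,\quad 2\,\mathbb{E}\!\int_0^T\|F_v-B\|_H^2\,dt,
\]
each of which must therefore vanish. The first identity says $-\tilde v(t)\in\bar\partial L(t,v(t))=A(t,v(t))$ for $dt\otimes\mathbb{P}$-a.e.\ $(t,\omega)$; the second gives $v(0)=u_0$; the third gives $F_v=B$. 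Substituting these into the defining representation $v(t)=v(0)+\int_0^t\tilde v\,ds+\int_0^t F_v\,dW$ of an element of $\mathcal{A}^2_H$ yields the required integral form of (\ref{basic.2}).

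\textbf{Expected main obstacle.} The one non-routine point is matching the exponents: Lemma~\ref{lem-AtoL} produces bounds in the $L^\alpha_V$-$L^\beta_{V^*}$ scale, whereas $(A_1)$ of Proposition~\ref{triple-thm} is written in the $L^2_V$ scale. When $\alpha\ne 2$ one must either restrict the ambient path space accordingly or argue that the $\lambda$-regularization already carried out in the proof of Proposition~\ref{triple-thm} still yields the necessary a~priori estimates; this is essentially bookkeeping, but it is the step that requires actual care.
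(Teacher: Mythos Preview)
Your proposal is correct and follows essentially the same approach as the paper: choose $L$ via Theorem~\ref{Fitz}/Section~\ref{mon.rep}, take $\ell=\ell_{u_0}$ and $M=M_B$ exactly as you do, and invoke Proposition~\ref{triple-thm}. Your ``Extracting the SPDE'' step is slightly redundant, since the three relations $-\tilde v\in\bar\partial L(v)$, $(-v(0),v(T))\in\partial\ell(v(0),v(T))$, $-F_v\in\bar\partial M(F_v)$ are already the output of Proposition~\ref{triple-thm}; computing $\partial\ell$ and $\bar\partial M$ directly for these explicit quadratic Lagrangians gives $v(0)=u_0$ and $F_v=B$ without redoing the It\^o manipulation, but your route is equally valid. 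The exponent mismatch you flag (the $L^\alpha_V$ bounds from Lemma~\ref{lem-AtoL} versus the $L^2_V$ norms in $(A_1)$) is indeed a genuine bookkeeping point that the paper itself glosses over.
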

\begin{proof}
It suffices to apply Proposition \ref{triple-thm} with the self-dual Lagrangian $L$ associated with $A$, the time boundary $\Omega$-dependent Lagrangian $\ell_{u_0}$ on $H\times H$ given by 
$$\ell_{u_0}(a,b)=\frac{1}{2}  \Vr a\Vr^{^2}_H+ \frac{1}{2}  \Vr b\Vr^{^2}_H-2\langle u_0(w), a\rangle_H+\Vr u_0(w)\Vr^{^2}_H,$$
and the $\Omega_T$-dependent self-dual Lagrangian $M$ on $L_H^2(\Omega_T)$, given by
$$M_B(G_1,G_2)=\Psi_{B(w, t)}(G_1)+\Psi_{B(w,t)}^*(G_2),$$ 
where $\Psi_{B(w, t)}:H \rightarrow \pR$ is the convex function  
$\Psi_{B(w, t)} (G)=\frac{1}{2} \Vr G-2B(w, t)\Vr^2_H.$
\end{proof}


 \section{Applications to various SPDEs with additive noise\label{Ex}}

In the following examples, we shall assume $D$ is a smooth bounded domain in $\R^n$, $W$ is a real Brownian motion, and $B:\Omega \times [0,T] \rightarrow L^2(D)$ is a fixed progressively measurable stochastic process.

\subsection{Stochastic evolution driven by diffusion and transport}

\noindent Consider the following stochastic transport equation:
\begin{equation}\label{transport_stoch}
\begin{cases}
du = (\Delta u  + \textbf{a}(x)\cdot \nabla u) dt+ B(t) dW \ &\textnormal{on} \ [0,T] \times  D \\
u(t, x)=0 &\textnormal{on} \ [0,T] \times  \partial D \\
u(0, x)=u_0(x) & \textnormal{on} \ D,
\end{cases}
\end{equation} 
where  
$\textbf{a}: D \rightarrow \R^n$ is a smooth vector field with compact support in $D$, such that $\div(\textbf{a}) \geq 0$. Assume $u_0 \in L^2(\Omega,\F_0,\mathbb{P}; H^1_0(D))$ such that $\Delta u_0 \in L^2(D)$, $\mathbb{P}$-a.s. \\
Consider the operator $\Gamma u=\textbf{a} \cdot \nabla u+\frac 1 2 (\div\, \textbf{a})\, u,$ which, by Green's formula, is skew-adjoint on $H^1_0(D)$. Also consider the convex function 
$$\varphi(u)=\begin{cases}
\frac 1 2 \int_D \vr \nabla u \vr^2 dx +\frac 1 4 \int_D (\div\, \textbf{a})\vr u\vr^2 dx \qquad &u \in H^1_0(D)\\
+\infty & \textnormal{otherwise,}
\end{cases}$$
which is clearly coercive on $H^1_0(D)$. Consider the Gelfand triple
$
H^1_0(D) \subset L^2(D) \subset H^{-1}(D)$, and the self-dual Lagrangian on $H^1_0(D)\times H^{-1}(D)$, defined by
\[
L(u,p)=\varphi (u) +\varphi^*(\Gamma u +p). 
\]
The corresponding functional on It\^o space is then, 
\begin{align*}
I(u)& =\  \E \left\{ \int_0^T  \left( \Big(\frac 1 2 \int_D (\vr \nabla u \vr^2 dx +\frac 1 4 \int_D (\div\, \textbf{a})\vr u\vr^2 ) dx \Big)+ \varphi^*\Big(-\tilde{u}(t, \cdot )+\Gamma (u (t, \cdot))\Big) \right) \, dt \right\} \\
 & \quad +
\E \, \left\{\frac{1}{2} \int_0^T \left( \int_D\Big(|F_u(t, x)|^2+ 2|B(t, x)|^2 -4  F_u(t,x)B(t,x)\Big) dx\right)  \, dt \right\}\\
 &\quad +  \E \, \left\{\int_D \Big(\frac{1}{2}|u(0, x)|^2 + \frac{1}{2}|u(T, x)|^2 - 2u_0(x)u(0, x)+ \frac{1}{2} |u_0(x)|^2\Big)\, dx \right\}.
\end{align*} 
Apply Theorem \ref{one-sol} to find a path $v \in \mathcal{A}^2_{L^2(D)}$, valued in $H^1_0(\Omega)$, that minimizes $I$ in such a way that $I(v)=0$, to obtain
 $$-\tilde{v}+\textbf{a} \cdot \nabla v+ \frac 1 2(\div\, \textbf{a})v \in \partial \varphi(v)= -\Delta v + \frac 1 2(\div\, \textbf{a})v,$$
$$v(0)=u_0, \ F_v=B.$$
The process 
$v(t)= u_0 +\int_0^t \Delta v(s) ds + \int_0^t \textbf{a} \cdot \nabla v(s) ds+\int_0^t B(s) dW(s)$
is therefore a solution to (\ref{transport_stoch}).

\subsection{Stochastic porous media}

\noindent Consider the following SPDE,
\begin{align}\label{porous}
\begin{cases}
du(t)=\Delta u^p(t)dt+B(t)dW(t) \qquad &\ntx{on} \ D\times [0,T]\\
u(t, x)=0 &\textnormal{on} \ [0,T] \times  \partial D \\
u(0, x)=u_0(x) &\ntx{on} \ D,
\end{cases}
\end{align}
where $p\geq \frac{n-2}{n+2}$, and $u_0 \in L^2(\Omega,\F_0,\mathbb{P}; H^{-1}(D))$.\\
Equip the Hilbert space $H=H^{-1}(D)$ with the inner product 
$$\langle u,v \rangle_{H^{-1}}=\langle u,(-\Delta)^{-1}v \rangle
=\int_D u(x) (-\Delta)^{-1}v(x) \, dx.$$
Since $p\geq \frac{n-2}{n+2}$, 
$L^{p+1}(D) \subset H^{-1}(D) \subset L^{\frac{p+1}{p}}(D)$
is an evolution triple. 
Now consider the convex functional
\begin{equation*}
\varphi(u)=\begin{cases}
\frac{1}{p+1} \int_D \vr u(x)\vr^{p+1} dx \qquad &\ntx{on} \ L^{p+1}(D)\\
+\infty  &\ntx{elsewhere},
\end{cases}
\end{equation*}
whose Legendre conjugate is given by
$$\varphi^*(u^*)=\frac{p}{p+1}\int_D \vr (-\Delta)^{-1} u^* \vr^{\frac{p+1}{p}}dx.$$
Now, minimize the following self-dual  functional on $\mathcal{A}^2_H$,
 \begin{align*}
I(u)=&\E \ \left\{ \frac{1}{p+1} \int_0^T \int_D \Big(\left\vr u(x)\right\vr^{p+1} + p\, \left\vr (-\Delta)^{-1}(-\tilde{u}(t)) \right\vr^{\frac{p+1}{p}} \Big)dx \, dt \right\} \\
&\hspace{0.9cm}+\E\ \left\{\frac{1}{2}  \Vr u(0)\Vr^2_{_{H^{-1}}}+ \frac{1}{2}  \Vr u(T)\Vr^2_{_{H^{-1}}} +\Vr u_0\Vr^2_{_{H^{-1}}}-2\langle u_0,u(0, \cdot) \rangle_{_{H^{-1}}} \right\}\\
&\hspace{0.9cm} + \E \left\{ \int_0^T\Big(\frac{1}{2}\Big(\Vr F_u(t)\Vr_{_{H^{-1}}}^2\,+ 2\Vr B(t)\Vr_{_{H^{-1}}}^2 -4 \langle F_u(t),B(t) \rangle_{_{H^{-1}}}  \Big) dt\right\}.
\end{align*} 
Apply Theorem \ref{one-sol} to find a process $v\in \mathcal{A}^2_H$ with values in $L^{p+1}(D)$ such that 
$$(-\Delta)^{-1}(-\tilde{v}(t)) \in \partial \varphi (v(t))= v^p, F_v=B,\, \hbox{and $v(0)=u_0$.}
$$ 
It follows that $v(t)= u_0 +\int_0^t \Delta v^p(s) ds +\int_0^t B(s) dW(s), $
provides a solution for  (\ref{porous}).

\subsection{Stochastic PDE involving the p-Laplacian}

\noindent Consider the  equation
\begin{equation*}
\begin{cases}
du=(\Delta_p u - u\vr u \vr^{p-2}) dt+ B(t)dW \qquad &\on \ D\times [0,T]\\
u(t, x)=0 &\textnormal{on} \ [0,T] \times  \partial D \\
u(0, x)=u_0(x) & \on\ D,
\end{cases}
\end{equation*}
where $p\in [2, +\infty)$,  $\Delta_p u= \div(\vr \nabla u \vr^{p-2}\nabla u)$ is the $p$-Laplacian operator, and $u_0$ is given such that $u_0 \in W_0^{1,p}(D) \cap \{u; \Delta_p u \in L^p(D)  \}$. It is clear that $W_0^{1,p}(D) \subset L^p(D)$ continuously and densely, which ensures that the functional
\begin{align*}
\varphi(u)=\frac 1 p \int_D \vr \nabla u(x)\vr^p dx + \frac{1}{p} \int_D  \vr u(x)\vr^p dx,
\end{align*}
is convex, lower semi-coninuous and coercive on $W_0^{1,p}(D)$ with respect to the evolution triple 
$$W_0^{1,p}(D) \subset L^p(D) \subset L^2(D) \subset W_0^{1,p}(D)^* \subset L^q(D),$$
where $\frac{1}{p}+\frac{1}{q}=1$. 
Theorem \ref{one-sol} applies to the self-dual functional 
\begin{align*}
I(u)& =\  \E \int_0^T  \Big( \varphi(t,u)+ \varphi^*(t,-\tilde{u})\Big) \, dt \\
& \quad+ \E\ \Big(\frac{1}{2}  \Vr u(0)\Vr^2_{_{L^2(D)}}+ \frac{1}{2}  \Vr u(T)\Vr^2_{_{L^2(D)}} -2\langle u_0, u(0)\rangle +\Vr u_0\Vr^2_{_{L^2(D)}}\Big) \\
&\quad + \E \int_0^T\Big(\frac{1}{2} \, \Vr F_u(t)\Vr_{_{L^2(D)}}^2\,+  \Vr B(t) \Vr_{_{L^2(D)}}^2 - 2\langle F_u(t),B(t) \rangle \Big) \, dt.
\end{align*} 
to yield a $W_0^{1,p}(D)$-valued process $v\in \mathcal{A}^2_{L^2(D)}$,  where the 
null infimum is attained. It follows that  
$$-\tilde{v} \in \partial \varphi(v)= -\Delta_p v + v\vr v \vr^{p-2},$$
$$v(0)=u_0, \ F_v=B,$$
and hence
$
v(t) - u_0 - \int_0^t B(s) dW(s)=\int_0^t \tilde{v}(s) ds
= \int_0^t \Delta_p v(s) ds - \int_0^t v(s)\vr v(s) \vr^{p-2} ds.
$

\section{Non-additive noise driven by self-dual Lagrangians\label{non-add}}

In this section, we give a variational resolution for stochastic equations of the form
\begin{equation}\label{non-SDE}
\begin{cases}
du = -\bar \partial \mathcal{L}(u)(t)\, dt- \Lambda u (t)\, dt + B(t, u(t)) dW\\
u(0)=u_0, 
\end{cases}
\end{equation}
where ${\mathcal L}$ is a self-dual Lagrangian on $L^\alpha(\Omega_T;V) \times L^\beta(\Omega_T;V^*)$, $1< \alpha <+\infty$ and $\beta$ is its conjugate,
and where $V\subset H\subset V^*$ is a given Gelfand triple. Here $\Lambda: D(\Lambda) \subset L^\alpha(\Omega_T;V) \to L^\beta(\Omega_T;V^*)$ is an appropriate nonlinear operator to be described below.
We shall assume that ${\cal L}$ satisfies the following boundedness conditions:
\begin{align}\label{L10}
 C_2 (\Vr u \Vr^\alpha_{L^\alpha_V(\Omega_T)}+\Vr p \Vr^\beta_{L^\beta_{V^*}(\Omega_T)}-1) \leq  \mathcal{L}(u,p)
\leq C_1(1+\Vr u \Vr^\alpha_{L^\alpha_V(\Omega_T)}+\Vr p \Vr^\beta_{L^\beta_{V^*}(\Omega_T)}).
\end{align}
and 
\begin{equation}\label{L20}
\Vr \bar{\partial}\mathcal{L}(u)\Vr_{L^\beta_{V^*}(\Omega_T)} \leq C_3(1+ \Vr u \Vr^{\alpha-1}_{L^\alpha_V(\Omega_T)}).
\end{equation}
Note that in the last section, we worked in a Hilbertian setting, then used inf-convolution to find a solution that is valued in the Sobolev space $V$. 
This approach does not work in the non-additive case, since we need to work with stronger topologies on the space of It\^o processes that will give the operator $B$ a chance to be completely continuous. We shall therefore strengthen the norm on the It\^o space over a Gelfand triple, at the cost of losing coercivity, that we shall recover through perturbation methods.  \\
More precisely, we are searching for  a solution $u$ of the form
\begin{equation}\label{ito-form}
u(t)=u(0)+ \int_0^t \tilde{u}(s) ds + \int_0^t F_u(s)dW(s),
\end{equation}
where $u \in L^\alpha(\Omega_T;V)$, $\tilde{u} \in L^\beta(\Omega_T;V^*)$ and $F_u \in L^2(\Omega _T;H)$  are progressively measurable. The space of such processes, 
will be denoted by $\mathcal{Y}^{\alpha}_V$, and will be equipped with the norm,
\begin{align*}
\Vert u \Vert_{\mathcal{Y}^{\alpha}_V}= \Vert u(t) \Vert_{L^\alpha_V(\Omega_T)}+ \Vert \tilde{u}(t) \Vert_{L^\beta_{V^*}(\Omega_T)} + \Vert F_u(t)\Vert_{L^2_H(\Omega_T)}.
\end{align*}
As shown in \cite{PR}, any such a process $u\in  \mathcal{Y}^\alpha_V$ has a $dt \otimes \P$-equivalent version $\hat u$ that is a $V$-valued progressively measurable process that satisfies --among other things-- 
 the following  It\^o's formula: 
  $\P$-a.s. and for all $t\in[0,T]$,
\begin{equation*}
\Vr u(t)\Vr_H^2=\Vr u(0)\Vr_H^2+2 \int_0^t \langle \tilde{u}(s), \hat u(s) \rangle_{_{V^*, V}} ds + \int_0^t \Vr F_u(s)\Vr^2_H ds+ 2 \int_0^t \langle u(s), F_u(s)\rangle_H dW(s),
\end{equation*}
In particular, we have for all $t\in[0,T]$, 
\begin{equation}\label{ito}
\E(\Vr u(t) \Vr^2_H)= \E(\Vr u(0) \Vr^2_H)+\E\int_0^t \Big(2\langle\tilde{u}(s), \hat{u}(s) \rangle_{_{V^*, V}}+ \Vr F_u(s) \Vr^2_H \Big)ds.
\end{equation}
Furthermore, we have  $u \in C([0,T];H)$. In fact, one can deduce  that for any $u \in  \mathcal{Y}_V^\alpha$,  $u\in C([0,T];V^*)$ and $u \in L^\infty(0,T;H)$ $\P$-a.s (\cite{Pardoux1} and \cite{PR}).
 From now on, a  process $u$ in $\mathcal{Y}^\alpha_V$ will always be identified with its $dt \otimes \P$-equivalent $V$-valued version $\hat{u}$. 

\begin{thm} \label{last-thm} Consider a self-dual Lagrangian ${\cal L}$ on $L^\alpha(\Omega_T;V) \times L^\beta(\Omega_T;V^*)$ satisfying (\ref{L10}) and (\ref{L20}), and let $B:  \mathcal{Y}^\alpha_V \rightarrow L^2(\Omega_T;H)$ be a --not-necessarily linear-- weak-to-norm continuous map such that for some 
 $C>0$ and $0<\delta< \frac{\alpha+1}{2}$, 
\begin{equation}\label{B-bound}
\Vr Bu \Vr_{L^2_H(\Omega_T)} \leq C \Vr u\Vr^\delta_{L^\alpha_V(\Omega_T)} \quad\hbox{for any $u\in \mathcal{Y}^\alpha_V$.}
\end{equation}
Suppose $\Lambda$ is a weak to weak continuous operator from $\mathcal{Y}^\alpha_V$ into $L^\beta(\Omega_T;V^*)$ such that 
\begin{equation}\label{Lambda-cond.0}
\E \int_0^T \langle \Lambda u(t),u(t) \rangle =0 \quad \hbox{for every $u \in \mathcal{Y}^\alpha_V,$}
\end{equation} and
\begin{equation}\label{Lambda-cond}
\Vr \Lambda u\Vr_{L^\beta_{V^*}(\Omega_T)} \leq 
f\Big(\Vr u\Vr_{L^\alpha_V(\Omega_T)}\Big),
\end{equation} 
where $f$ is a non-decreasing continuous real function.\\ 
Let $u_0$ be a given random variable in $L^2(\Omega, \F_0, \mathbb{P}; H)$.  Equation (\ref{non-SDE}) 
has then a solution $u$ in $ \mathcal{Y}^\alpha_V$, that is a stochastic process satisfying 
\begin{equation}
u(t)=u_0-\int_0^t \left(\bar{\partial}{{\cal L}}(u)(s)+\Lambda u(s) \right)ds + \int_0^t Bu(s)dW(s).
\end{equation}
\end{thm}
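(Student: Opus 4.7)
The plan is to apply the Hamiltonian-superposition variational principle (Theorem \ref{superpose-thm}) directly on the It\^o path space $Z = G = \mathcal{Y}^\alpha_V$, splitting the equation into a self-dual drift factor and a self-dual factor combining the initial condition with the diffusion coefficient, with the nonlinearities $\Lambda$ and $B$ absorbed into weak-to-weak continuous maps $\Gamma_i$. Concretely, I take $X_1 = L^\alpha_V(\Omega_T)$, $A_1 u = u$, $\Gamma_1 u = -\tilde u - \Lambda u$, and $L_1 = \mathcal{L}$; and $X_2 = H \times L^2_H(\Omega_T)$, $A_2 u = (u(0), F_u)$, $\Gamma_2 u = (2u_0 - u(0),\, 2Bu - F_u)$, with the self-dual Lagrangian
$$L_2\bigl((a, G), (c, H)\bigr) = \tfrac{1}{2}\|a\|_H^2 + \tfrac{1}{2}\|c\|_H^2 + \tfrac{1}{2}\|G\|_{L^2_H(\Omega_T)}^2 + \tfrac{1}{2}\|H\|_{L^2_H(\Omega_T)}^2,$$
whose self-dual vector field is the identity. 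A short direct computation gives $L_2(A_2 u, \Gamma_2 u) - \langle A_2 u, \Gamma_2 u\rangle = 2\E\|u(0)-u_0\|_H^2 + 2\E\int_0^T \|F_u - Bu\|_H^2\, dt$, so the functional targeted by Theorem \ref{superpose-thm} is
$$I(u) = \bigl[\mathcal{L}(u, -\tilde u - \Lambda u) + \langle u, \tilde u + \Lambda u\rangle\bigr] + 2\E\|u(0)-u_0\|_H^2 + 2\E\int_0^T \|F_u - Bu\|_H^2\, dt.$$

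Next, I verify the hypotheses of Theorem \ref{superpose-thm}: the density of $A_1(\ker A_2 \cap G)$ in $X_1$ follows from approximating any process in $L^\alpha_V(\Omega_T)$ by absolutely continuous paths $t\mapsto \int_0^t \tilde u(s)\,ds$, while the density of $A_2(G)$ in $X_2$ uses the freedom to pick $\tilde u$ sufficiently regular so that $(u(0), F_u)$ can realize any prescribed pair. The weak-to-weak continuity of $\Gamma_1$ uses the continuous linear projection $u\mapsto \tilde u$ and the hypothesis on $\Lambda$, while that of $\Gamma_2$ is immediate from the weak-to-norm continuity of $B$. For the weak upper semi-continuity of $u \mapsto \langle A_1 u, \Gamma_1 u\rangle + \langle A_2 u, \Gamma_2 u\rangle$, I use the skew-symmetry hypothesis $\E\int_0^T \langle \Lambda u, u\rangle\,dt = 0$ together with It\^o's identity (\ref{ito}), which converts $\E\int_0^T\langle u, \tilde u\rangle\,dt$ into $\tfrac12\E\|u(T)\|_H^2 - \tfrac12\E\|u(0)\|_H^2 - \tfrac12 \E\int_0^T\|F_u\|_H^2\,dt$; the resulting expression is an affine combination of weakly lower semi-continuous quadratic terms (appearing with a sign that makes the combination upper semi-continuous) plus the weakly continuous cross term $\E\int_0^T\langle F_u, Bu\rangle\,dt$, where weak continuity uses the norm convergence $Bu_n \to Bu$ in $L^2_H(\Omega_T)$.

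The main obstacle is verifying the coercivity condition (\ref{superpose-coerc}). Using (\ref{L10}) and Lemma \ref{boundedness-L}, I lower-bound $H_{L_1}(0, u)$ by $c\|u\|^\alpha_{L^\alpha_V(\Omega_T)} - c'$; the explicit form of $L_2$ gives $H_{L_2}(0, (u(0), F_u)) = \tfrac12\|u(0)\|_H^2 + \tfrac12\|F_u\|_{L^2_H(\Omega_T)}^2$. Combining these with the It\^o rewriting of the cross pairings yields a lower bound involving the coercive quantities $\|u\|^\alpha_{L^\alpha_V}$, $\|\tilde u\|^\beta_{L^\beta_{V^*}}$ (via the $\beta$-coercivity in the second argument of $\mathcal{L}$, together with (\ref{Lambda-cond})), $\|F_u\|^2_{L^2_H}$, $\|u(T)\|_H^2$, and $\|u(0)\|_H^2$, minus perturbative terms $C\|Bu\|^2_{L^2_H}\leq C\|u\|^{2\delta}_{L^\alpha_V}$ and a term controlled by $f(\|u\|_{L^\alpha_V})$. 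Careful use of Young's inequality with the sharp threshold $\delta < (\alpha+1)/2$ allows these perturbations to be absorbed into the available coercive terms, so the sum tends to $+\infty$ as $\|u\|_{\mathcal{Y}^\alpha_V} \to \infty$. Theorem \ref{superpose-thm} then yields a minimizer $v\in\mathcal{Y}^\alpha_V$ with $I(v) = 0$, and the Euler-Lagrange inclusions $\Gamma_1 v \in \bar\partial\mathcal{L}(A_1 v)$ and $\Gamma_2 v \in \bar\partial L_2(A_2 v)$ give $\tilde v = -\bar\partial\mathcal{L}(v) - \Lambda v$, $v(0) = u_0$, and $F_v = Bv$, so that $v$ is the desired solution of (\ref{non-SDE}).
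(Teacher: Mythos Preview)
Your proposal has a genuine gap in the coercivity step, and it is precisely the obstacle that forces the paper to take a longer route. The expression you need to show is coercive on $G=\mathcal{Y}^\alpha_V$ is
\[
H_{L_1}(0,A_1u)-\langle A_1u,\Gamma_1u\rangle+H_{L_2}(0,A_2u)-\langle A_2u,\Gamma_2u\rangle,
\]
but after the It\^o rewriting this expression contains only terms controlled by $\|u\|_{L^\alpha_V}$, $\|u(0)\|_{L^2_H(\Omega)}$, $\|u(T)\|_{L^2_H(\Omega)}$ and $\|F_u\|_{L^2_H(\Omega_T)}$; it has \emph{no} term that grows with $\|\tilde u\|_{L^\beta_{V^*}(\Omega_T)}$. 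Your parenthetical ``via the $\beta$-coercivity in the second argument of $\mathcal{L}$'' does not apply here: $H_{L_1}(0,u)$ is the Legendre transform of $p\mapsto\mathcal{L}(0,p)$ evaluated at $u$, so the second slot has already been sup'd out and $\tilde u$ simply does not appear. One can send $\|\tilde u\|_{L^\beta_{V^*}}\to\infty$ while keeping the remaining components bounded, so the coercivity condition~(\ref{superpose-coerc}) fails. The paper states this explicitly just after the statement of Theorem~\ref{last-thm}.

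This is exactly why the paper proceeds by a two-layer regularization: first it adds the convex perturbation $\mu\psi(u)=\tfrac{\mu}{\beta}\|\tilde u\|^\beta_{L^\beta_{V^*}}$ to the Lagrangian (stochastic elliptic regularization), which supplies the missing coercivity in $\tilde u$, and it also adds the operator $Ku=\|u\|^{\alpha-1}_{L^\alpha_V}Du$ to $\Gamma_1$, which produces the term $\|u\|^{\alpha+1}_{L^\alpha_V}$ needed to absorb $\|Bu\|^2\le C\|u\|^{2\delta}_{L^\alpha_V}$ under the sharp hypothesis $\delta<\tfrac{\alpha+1}{2}$ (without $K$ you would need $\delta<\tfrac{\alpha}{2}$). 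Theorem~\ref{superpose-thm} then yields $u_\mu$; uniform bounds (obtained by testing against $u_\mu$ and against $D^{-1}\tilde u_\mu$) allow one to let $\mu\to 0$ (Lemma~\ref{mu-lemma}), and a second limiting argument removes $K$. Your direct application of Theorem~\ref{superpose-thm} bypasses these steps and therefore cannot close.
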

We would like to apply Theorem \ref{superpose-thm} to ${\cal L}$  on $L^\alpha(\Omega_T;V)\times L^\beta(\Omega_T;V^*)$ and to the 
following operators acting on $G=\{u \in \mathcal{Y}^\alpha_V; u(0)=u_0\}$,
 \begin{align*}
A_1: G \subset \mathcal{Y}^\alpha_V &\rightarrow L^\alpha(\Omega_T;V), \hspace{2cm}   \ \Gamma_1: G \subset \mathcal{Y}^\alpha_V \rightarrow L^\beta(\Omega_T;V^*) \\
A_1(u)&=u, \hspace{4cm} \Gamma_1(u)=-\tilde{u}-\Lambda u
\end{align*}
\begin{align*}
\nonumber A_2: G \subset \mathcal{Y}^\alpha_V &\rightarrow L^2(\Omega_T;H), \hspace{2cm} \quad \Gamma_2: G \subset \mathcal{Y}^\alpha_V \rightarrow L^2(\Omega_T;H)\\
A_2(u)&= \frac{1}{2}F_u, \hspace{4cm} \Gamma_2(u)= -F_u+ \frac{3}{2}Bu.
\end{align*}
Unfortunately, the coercivity condition  (\ref{superpose-coerc}) required to conclude is not satisfied. We have to therefore perturb the Lagrangian ${\cal L}$ (i.e., essentially perform a stochastic elliptic regularization) as well as the operator $\Gamma_1$ in order to ensure coercivity. We will then let the perturbations go to zero to conclude.
\subsection{Stochastic elliptic regularization}

To do that, we consider the convex lower semi-continuous function on $L^\alpha(\Omega_T, V)$
\begin{align}
\psi(u)=
\begin{cases}
\frac{1}{\beta}\E \int_0^T \Vr \tilde{u}(t) \Vr^\beta_{V^*} dt \quad &\text{if} \  u \in \mathcal{Y}^\alpha_V\\
+\infty & \text{if} \  u \in L^\alpha_{V}(\Omega_T) \backslash \mathcal{Y}^\alpha_V, 
\end{cases}
\end{align}
and  for any $\mu >0$,  its associated self-dual Lagrangian on $L^\alpha_{V}(\Omega_T) \times L^\beta_{V^*}(\Omega_T)$ given by
\begin{equation}
\Psi_\mu(u,p)=\mu\psi(u)+\mu \psi^*(\frac{p}{\mu}).
\end{equation}
We also consider a perturbation operator 
$$Ku:= (\Vr u\Vr^{\alpha-1}_{L^\alpha_V(\Omega_T)}) Du,$$
where $D$ is the duality map between $V$ and $V^*$. Note that by definition, $K$ is a weak-to-weak continuous operator from $\mathcal{Y}^\alpha_V$ to $L^\beta_{V^*}(\Omega_T)$.


\begin{lem}\label{psi-lem} Under the above hypothesis on ${\cal L}$, $B$ and $\Lambda$, 
there exists a process $u_\mu \in \mathcal{Y}^\alpha_V$ such that  $u(0)=u_0, \, \tilde{u}(T)=\tilde{u}(0)=0$, and satisying 
\begin{align*}
\tilde{u}_\mu+Ku_\mu + \Lambda u_\mu +\mu \, \partial \psi(u_\mu) &\in -\bar{\partial}{\cal L}(u_\mu) \\
F_{u_\mu}&=Bu_\mu.
\end{align*} 
\end{lem}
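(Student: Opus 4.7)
The plan is to invoke the Hamiltonian superposition principle of Theorem \ref{superpose-thm} applied to a suitably regularized self-dual setup. First I would form the convolved Lagrangian $\mathcal{L}_\mu := \mathcal{L} \oplus \Psi_\mu$, which is self-dual on $L^\alpha_V(\Omega_T) \times L^\beta_{V^*}(\Omega_T)$ by Lemma \ref{conv}, with $\bar{\partial}\mathcal{L}_\mu = \bar{\partial}\mathcal{L} + \mu \partial \psi$. I would take $Z := \mathcal{Y}^\alpha_V$ and $G$ to be the closed affine subspace $\{u \in \mathcal{Y}^\alpha_V : u(0) = u_0\}$, reduced to a closed linear subspace by subtracting a fixed reference It\^o process carrying the prescribed initial datum. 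With $X_1 := L^\alpha_V(\Omega_T)$, $X_2 := L^2_H(\Omega_T)$, I would set $L_1 := \mathcal{L}_\mu$, $L_2(G_1, G_2) := \tfrac{1}{2}\|G_1\|^2_H + \tfrac{1}{2}\|G_2\|^2_H$, $A_1 u := u$, $A_2 u := \tfrac{1}{2}F_u$, $\Gamma_1 u := -\tilde u - Ku - \Lambda u$, and $\Gamma_2 u := -F_u + \tfrac{3}{2}Bu$. With these choices the two inclusions $\Gamma_i v \in \bar{\partial} L_i(A_i v)$ produced by the theorem will read exactly as in the lemma (in particular the $L_2$ identity forces $F_{u_\mu} = Bu_\mu$).

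The hypotheses of Theorem \ref{superpose-thm} are then verified as follows: the density of the images of $A_1, A_2$ on $G$ and on $\ker A_2 \cap G$ follows from the It\^o correspondence (\ref{ito-correspond}) applied to smooth test processes; the operator $K$ is weak-to-weak continuous by construction via the duality map, $\Lambda$ is so by hypothesis, and $B$ is weak-to-norm continuous, which makes $\Gamma_1, \Gamma_2$ weak-to-weak continuous. For the weak upper semi-continuity of the pairings, It\^o's formula (\ref{ito}) transforms $\E\int_0^T \langle u, -\tilde u\rangle\, dt$ into $\tfrac{1}{2}\|u_0\|^2_H - \tfrac{1}{2}\E\|u(T)\|^2_H + \tfrac{1}{2}\|F_u\|^2_{L^2_H}$, and only the sign of the $-\E\|u(T)\|^2_H$ contribution is potentially adverse, for which weak upper semi-continuity holds; the term $\langle F_u, Bu\rangle$ is continuous along weak limits by the weak-to-norm continuity of $B$.

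The main obstacle, and the precise reason for introducing the perturbations $K$ and $\Psi_\mu$, is the coercivity condition (\ref{superpose-coerc}). From (\ref{L10}) and self-duality one obtains $H_{\mathcal{L}_\mu}(0, u) \geq c_1 \|u\|^\alpha_{L^\alpha_V} + c_2 \mu \|\tilde u\|^\beta_{L^\beta_{V^*}} - C$, while $H_{L_2}(0, \tfrac{1}{2}F_u) = \tfrac{1}{8}\|F_u\|^2_{L^2_H}$. Combining pairings, invoking (\ref{Lambda-cond.0}), and noting that the two $\pm\tfrac{1}{2}\|F_u\|^2_{L^2_H}$ contributions coming from It\^o's formula and from $-\langle A_2 u, \Gamma_2 u\rangle$ cancel, one is left with
\begin{align*}
-\langle A_1 u, \Gamma_1 u\rangle - \langle A_2 u, \Gamma_2 u\rangle = \langle Ku, u\rangle - \tfrac{3}{4}\langle F_u, Bu\rangle + \tfrac{1}{2}\E\|u(T)\|^2_H - \tfrac{1}{2}\|u_0\|^2_H.
\end{align*}
The $K$-pairing contributes coercivity of order strictly greater than $\alpha$ in $\|u\|_{L^\alpha_V}$; using (\ref{B-bound}) with Young's inequality gives $|\langle F_u, Bu\rangle| \leq \varepsilon \|F_u\|^2_{L^2_H} + C_\varepsilon \|u\|^{2\delta}_{L^\alpha_V}$, and since $2\delta < \alpha+1$ this subcritical term is absorbed by the combined $\mathcal{L}_\mu$- and $K$-coercivities; the $\Lambda$ growth (\ref{Lambda-cond}) is absorbed analogously. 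This secures (\ref{superpose-coerc}).

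Theorem \ref{superpose-thm} then yields $u_\mu \in G$ with $I(u_\mu) = 0$, and the self-dual Hamiltonian identities read $\Gamma_1 u_\mu \in \bar{\partial}\mathcal{L}_\mu(u_\mu) = \bar{\partial}\mathcal{L}(u_\mu) + \mu \partial \psi(u_\mu)$ together with $\Gamma_2 u_\mu \in \bar{\partial} L_2(A_2 u_\mu)$, which rearrange into the first stated inclusion and into $F_{u_\mu} = Bu_\mu$ respectively. The endpoint conditions $\tilde u_\mu(0) = \tilde u_\mu(T) = 0$ emerge from the Euler--Lagrange characterization of $\mu\partial\psi(u_\mu)$: in order for a selection $p \in \mu \partial \psi(u_\mu)$ to lie in $L^\beta_{V^*}(\Omega_T)$ and pair correctly against all admissible variations within $\mathcal{Y}^\alpha_V$, the boundary contributions arising from integration by parts in the time variable applied to $\|\tilde u_\mu\|^{\beta-2}_{V^*}\tilde u_\mu$ must vanish at $t = 0$ and $t = T$.
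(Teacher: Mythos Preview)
Your proposal is correct and follows essentially the same route as the paper: the same choice of spaces $Z,X_1,X_2,G$, operators $A_i,\Gamma_i$, and Lagrangians $L_1=\mathcal{L}\oplus\Psi_\mu$, $L_2=M$, the same use of It\^o's formula for upper semi-continuity and coercivity (with the $K$-term supplying the critical $\Vr u\Vr^{\alpha+1}_{L^\alpha_V}$ coercivity and the $\mu\psi$-term controlling $\tilde u$), and the same integration-by-parts argument for the endpoint conditions $\tilde u_\mu(0)=\tilde u_\mu(T)=0$. The only cosmetic differences are that you invoke $\bar\partial(\mathcal{L}\oplus\Psi_\mu)=\bar\partial\mathcal{L}+\mu\partial\psi$ directly where the paper writes out the optimal splitting at $\bar r$, and you note the affine-to-linear reduction of $G$; your remark that ``the $\Lambda$ growth is absorbed analogously'' is superfluous since $\langle u,\Lambda u\rangle=0$ already by (\ref{Lambda-cond.0}).
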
 
\begin{proof}
Apply Theorem \ref{superpose-thm}  as follow: Let $Z=\mathcal{Y}^\alpha_V, X_1= L^\alpha(\Omega_T;V), X_2=L^2(\Omega_T;H)$ with $G=\{u \in \mathcal{Y}^\alpha_V; u(0)=u_0\}$ which is a closed linear subspace of $\mathcal{Y}^\alpha_V$, and consider the operators
\begin{align*}
A_1: G \subset \mathcal{Y}^\alpha_V &\rightarrow L^\alpha(\Omega_T;V), \hspace{2cm}   \ \Gamma_1: G \subset \mathcal{Y}^\alpha_V \rightarrow L^\beta(\Omega_T;V^*) \\
A_1(u)&=u, \hspace{4cm} \Gamma_1(u)=-\tilde{u}-\Lambda u- Ku
\end{align*}
\begin{align}\label{2operators}
\nonumber A_2: G \subset \mathcal{Y}^\alpha_V &\rightarrow L^2(\Omega_T;H), \hspace{2cm} \quad \Gamma_2: G \subset \mathcal{Y}^\alpha_V \rightarrow L^2(\Omega_T;H)\\
A_2(u)&= \frac{1}{2}F_u, \hspace{4cm} \Gamma_2(u)= -F_u+ \frac{3}{2}Bu
\end{align}
where their domain is $G$. $A_1, A_2$ are linear, and $\Gamma_1, \Gamma_2$ are weak-to-weak continuous. \\ 
As to the Lagrangians, we  take  on $L^\alpha_{V}(\Omega_T) \times L^\beta_{V^*}(\Omega_T)$, the Lagrangian 
$$
L_1(u,p)=\mathcal{L} \oplus \Psi_\mu (u, p),
$$
while on $L^2_H(\Omega_T)\times L^2_H(\Omega_T)$, we take
\[
L_2(P, Q)=\E \int_0^T M(P(t, w), Q(t, w))\, dt,
\]
where 
$M(P,Q)= \frac{1}{2}\Vr P\Vr^2_H +\frac{1}{2}\Vr Q\Vr^2_H$.\\
In other words, we are considering the functional 
\begin{align*}
I_\mu(u)&=\mathcal{L}\oplus \Psi_\mu (A_1 u,\Gamma_1u)- \E \int_0^T \langle A_1 u, \Gamma_1 u\rangle dt + \E \int_0^T M(A_2 u,\Gamma_2u)- \langle A_2 u, \Gamma_2 u\rangle \, dt\\
&= \mathcal{L}\oplus \Psi_\mu (u,-\tilde{u}-\Lambda u-Ku) - \E \int_0^T \langle u, -\tilde{u}-\Lambda u-Ku \rangle \, dt \\
& \hspace{3.5cm}+ \E \int_0^T M(F_u/2, -F_u+3Bu/2) - \langle F_u/2, -F_u+3Bu/2 \rangle \, dt. 
\end{align*}
We now verify the conditions of Theorem \ref{superpose-thm}.
\begin{align*}
G_0=\text{Ker}(A_2)\cap G = \Big\{u\in  \mathcal{Y}^\alpha_V  ; \  u(t) = u_0+ \int_0^t \tilde{u}(s)  ds, \ \hbox{for some $\tilde{u} \in {L}^\beta_{V^*}(\Omega_T)\Big\}.$}
\end{align*}
It is clear that $A_1(G_0)$ is dense in $L^\alpha(\Omega_T;V)$.
Moreover, $A_2(G)$ is dense in $L^2(\Omega_T;H)$. 
To check the upper semi-continuity of 
 $$
u\to  \E \int_0^T \langle A_1 u, \Gamma_1 u\rangle+ \langle A_2 u, \Gamma_2 u \rangle \, dt
 $$
on  $\mathcal{Y}^\alpha_V$ equipped with the weak topology, we apply It\^o's formula and the fact that  $\langle u,\Lambda u \rangle=0$ on $\mathcal{Y}^\alpha_V$, to obtain 
\begin{align*}
\E \int_0^T \langle A_1 u, \Gamma_1 u\rangle+ \langle A_2 u, \Gamma_2 u \rangle \, dt &= \E \int_0^T \langle u, -\tilde{u}-\Lambda u-Ku \rangle +\langle F_u/2, -F_u+3Bu/2 \rangle \, dt \\
&= \frac{1}{2} \, \E \, \Vr u_0 \Vr_H^2 -  \frac{1}{2} \, \E \, \Vr u(T) \Vr_H^2 -\Vr u\Vr^{\alpha+1}_{L^\alpha_V(\Omega_T)} \\
& \quad +\frac{3}{4} \, \E \int_0^T  \langle F_u(t),Bu(t) \rangle dt.
\end{align*} 
Upper semi-continuity then follows from the compactness of the maps 
$\mathcal{Y}^\alpha_V\to L^2(\Omega; H)$ given by $u\mapsto (u(0), u(T))$, as well as the weak to norm continuity of $B$, which makes the functional $u \mapsto \E \int_0^T  \langle F_u,Bu\rangle dt$ weakly continuous. \\

\noindent To verify the coercivity, we first note that condition (\ref{L10}) implies that for some (different) $C_1>0$, 
$$H_{\mathcal{L}}(0,u) \geq C_1\Big(\Vr u\Vr^\alpha_{L^\alpha_V(\Omega_T)}-1\Big).$$ 
By also taking into account condition (\ref{B-bound}) on $B$, with the fact that $\delta<\frac{\alpha+1}{2}$, we get  that 

\begin{align*}
H_{\mathcal{L}}(0,u)&+ \mu \, \psi(u) +\E \int_0^T \langle u, \tilde{u}+\Lambda u + Ku \rangle \, dt+ \E \int_0^T H_M(0, F_u/2) - \langle F_u/2, -F_u+3Bu/2 \rangle \, dt\\
&= H_{\mathcal{L}}(0,u) + \frac{\mu}{\beta} \Vr \tilde{u}\Vr^\beta_{L^\beta_{V^*}(\Omega_T)} -\frac{1}{2} \, \Vr u_0 \Vr_{L^2(\Omega; H)}^2 + \frac{1}{2} \, \Vr u(T) \Vr_{L^2(\Omega; H)}^2 +\Vr u\Vr^{\alpha+1}_{L^\alpha_V(\Omega_T)} \\
&\qquad + \frac{1}{8} \Vr F_u(t) \Vr^2_{L^2(\Omega_T; H)} -\frac{3}{4} \, \E \int_0^T \langle F_u(t),Bu(t) \rangle \, dt  \\
&\geq C_1\Big(\Vr u\Vr^\alpha_{L^\alpha_V(\Omega_T)}-1\Big) + \frac{\mu}{\beta} \, \Vr \tilde{u}\Vr^\beta_{L^\beta_{V^*}(\Omega_T)} +\Vr u\Vr^{\alpha+1}_{L^\alpha_V(\Omega_T)} \\
& \qquad + C_2 \Big(\Vr F_u(t) \Vr^2_{L^2_H(\Omega_T)} - \Vr F_u \Vr_{L^2_H(\Omega_T)} \Vr Bu \Vr_{L^2_H(\Omega_T)} \Big) +C\\
& \geq  C_1\Big(\Vr u\Vr^\alpha_{L^\alpha_V(\Omega_T)}-1\Big) + \frac{\mu}{\beta} \, \Vr \tilde{u}\Vr^\beta_{L^\beta_{V^*}(\Omega_T)} +\Vr u\Vr^{\alpha+1}_{L^\alpha_V(\Omega_T)}\\
& \qquad  +  C_2\Big(\Vr F_u(t) \Vr^2_{L^2_H(\Omega_T)} - \Vr F_u \Vr_{L^2_H(\Omega_T)} \Vr u \Vr^\delta _{L^\alpha_V(\Omega_T)} \Big) +C \\
& \geq \frac{\mu}{\beta} \, \Vr \tilde{u}\Vr^\beta_{L^\beta_{V^*}(\Omega_T)}+ \Vr u\Vr^{\alpha+1}_{L^\alpha_V(\Omega_T)} \Big(1+ o(\Vr u\Vr_{L^\alpha_V(\Omega_T)})\Big) + C_2 \Vr F_u(t) \Vr^2_{L^2_H(\Omega_T)}.
\end{align*}
Therefore, by Theorem \ref{superpose-thm}, there exists $u_\mu \in G \subset \mathcal{Y}^\alpha_V$ such that $I_\mu(u_\mu)=0$, i.e.
\begin{align*}
0&= \mathcal{L}\oplus \Psi_\mu (u_\mu,-\tilde{u}_\mu-\Lambda u_\mu -Ku_\mu) - \E \int_0^T \langle u_\mu, -\tilde{u}_\mu-\Lambda u_\mu-Ku_\mu \rangle \, dt \\
& \quad + \E \int_0^T M(\frac 1 2 F_{u_\mu}, -F_{u_\mu}+\frac 3 2 Bu_\mu) - \langle \frac 1 2 F_{u_\mu}, -F_{u_\mu}+\frac 3 2 Bu_\mu \rangle \, dt. 
\end{align*}
Since $\mathcal{L}\oplus \Psi_\mu$ is convex and coercive in the second variable, there exists $\bar{r} \in L^\beta_{V^*}(\Omega_T)$ such that
$$\mathcal{L}\oplus \Psi_\mu (u_\mu,-\tilde{u}_\mu -\Lambda u_\mu -Ku_\mu) = \mathcal{L}(u_\mu, \bar{r})+ \Psi_\mu(u_\mu, -\tilde{u}_\mu-\Lambda u_\mu -Ku_\mu-\bar{r}),$$
hence
\begin{align*}
0&= {\cal L}(u_\mu, \bar{r}) - \langle u_\mu, \bar{r} \rangle  + \Psi_\mu(u_\mu, -\tilde{u}_\mu  -\Lambda u_\mu - Ku_\mu-\bar{r}) + \E \int_0^T \langle u_\mu, \tilde{u}_\mu+\Lambda u_\mu +Ku_\mu+\bar{r} \rangle \, dt \\
& \qquad + \E \int_0^T M(\frac 1 2 F_{u_\mu}, -F_{u_\mu}+\frac 3 2 Bu_\mu) - \langle \frac 1 2 F_{u_\mu}, -F_{u_\mu}+\frac 3 2 Bu_\mu \rangle \, dt. 
\end{align*}
Due to the self-duality of ${\cal L}$, $\Psi_\mu$ and $M$, this becomes the sum of three non-negative terms, and therefore 
\begin{align*}
{\cal L}(u_\mu, \bar{r}) &- \E \int_0^T \langle u_\mu(t), \bar{r}(t) \rangle dt =0,\\
\Psi_\mu(u_\mu, -\tilde{u}_\mu -\Lambda u_\mu -Ku_\mu-\bar{r}) &+ \E \int_0^T \langle u_\mu(t), \tilde{u}_\mu(t)+\Lambda u_\mu+Ku_\mu(t)+\bar{r}(t) \rangle \, dt =0,\\
\E \int_0^T M(\frac 1 2 F_{u_\mu}(t), -F_{u_\mu}(t)+&\frac 3 2 Bu_\mu(t)) - \langle \frac 1 2 F_{u_\mu}(t), -F_{u_\mu}(t)+\frac 3 2 Bu_\mu(t) \rangle \, dt=0.
\end{align*}
By the limiting case of Legendre duality, this yields 
\begin{align}\label{mu-lag}
\tilde{u}_\mu+\Lambda u_\mu+Ku_\mu+\mu \, \partial \psi(u_\mu) \in -\bar{\partial}{\cal L}(u_\mu)\\
\nonumber -F_{u_\mu}(t)+\frac 3 2 Bu_\mu(t) \in \bar{\partial} M(t,\frac 1 2 F_{u_\mu}(t))=\frac 1 2 F_{u_\mu}(t).
\end{align}
The second line implies that for a.e. $t\in [0,T]$ we have $\P$-a.s. $F_{u_\mu}= Bu_\mu$. Moreover, from (\ref{mu-lag}) we have that $\partial \psi(u_\mu) \in L^\beta_{V^*}(\Omega_T)$.\\
Now for an arbitrary process $v \in \mathcal{Y}_V^\alpha$ of the form  
$v(t)= v(0)+ \int_0^t \tilde{v}(s) ds +\int_0^t F_v(s) dW(s),$
 we have
$\langle \partial \psi(u_\mu(t)), v \rangle = \langle \Vr \tilde{u}_\mu\Vr^{\beta-2}_{V^*} D^{-1}\tilde{u}, \tilde{v}\rangle$. Applying It\^o's formula with the progressively measurable process $X(t):=\Vr \tilde{u}_\mu\Vr^{\beta-2}_{V^*} D^{-1}\tilde{u}$, we obtain
\begin{align}\label{tilde-integ-byparts}
\nonumber \E \int_0^T \langle\Vr \tilde{u}_\mu\Vr^{\beta-2}_{V^*} D^{-1}\tilde{u}_\mu(t), \tilde{v}(t)  \rangle &= -\E \int_0^T \langle \frac{d}{dt}(\Vr \tilde{u}_\mu\Vr^{\beta-2}_{V^*} D^{-1}\tilde{u}_\mu) , v(t)\rangle \\
\nonumber &\quad +\E \, \langle \Vr \tilde{u}_\mu(T)\Vr^{\beta-2}_{V^*} D^{-1}\tilde{u}_\mu(T),v(T) \rangle \\
&\quad -\E \, \langle \Vr \tilde{u}_\mu(0)\Vr^{\beta-2}_{V^*} D^{-1}\tilde{u}_\mu(0),v(0) \rangle,
\end{align}
which, in view of (\ref{mu-lag}), implies that
\begin{align*}
0&=\E \int_0^T \Big[\langle \tilde{u}_\mu(t)+\Lambda u_\mu+Ku_\mu(t)   + \bar{\partial}\mathcal{L}(u_\mu), v \rangle+\mu \langle\Vr \tilde{u}_\mu\Vr^{\beta-2}_{V^*} D^{-1}\tilde{u}_\mu, \tilde{v}  \rangle \Big] \, dt\\
&= \E \int_0^T \Big\langle \tilde{u}_\mu(t)+\Lambda u_\mu+Ku_\mu(t)   +  \bar{\partial}\mathcal{L}(u_\mu)-\mu \frac{d}{dt}(\Vr \tilde{u}_\mu\Vr^{\beta-2}_{V^*} D^{-1}\tilde{u}_\mu)  , v \Big\rangle \, dt \\
& \quad +\mu \, \E \, \langle \Vr \tilde{u}_\mu(T)\Vr^{\beta-2}_{V^*} D^{-1}\tilde{u}_\mu(T),v(T) \rangle -\mu \, \E \, \langle \Vr \tilde{u}_\mu(0)\Vr^{\beta-2}_{V^*} D^{-1}\tilde{u}_\mu(0),v(0) \rangle,
\end{align*}
hence $\tilde{u}_\mu(T)= \tilde{u}_\mu(0)=0$ and 
$\tilde{u}_\mu+\Lambda u_\mu+Ku_\mu -\mu \frac{d}{dt}(\Vr \tilde{u}_\mu\Vr^{\beta-2}_{V^*} D^{-1}\tilde{u}_\mu) \in -\bar{\partial}{\cal L}(u_\mu).
$
\end{proof}
\noindent In the following lemma, we shall remove the regularizing term $\mu \partial \psi $.
\begin{lem}\label{mu-lemma} Under the above assumptions on ${\cal L}$, $\Lambda$ and $B$,   
there exists $u\in \mathcal{Y}^\alpha_V$ such that $u(0)=u_0$, $F_u=Bu$ and 
\begin{align}
\mathcal{L}( u,-\tilde{u}-\Lambda u -Ku) + \E \int_0^T \langle  u(t), \tilde{u}(t)+\Lambda u+Ku(t) \rangle \, dt&=0.
\end{align}
\end{lem}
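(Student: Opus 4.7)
The idea is to obtain $u$ as a limit of the family $\{u_\mu\}_{\mu>0}$ produced by Lemma \ref{psi-lem} as $\mu\downarrow 0$, and to show that the perturbation $\mu\partial\psi(u_\mu)$ disappears in the limit so that the desired self-dual identity survives. First, I would extract $\mu$-uniform a priori bounds by testing the inclusion $\tilde u_\mu + Ku_\mu + \Lambda u_\mu + \mu\partial\psi(u_\mu) \in -\bar\partial\mathcal{L}(u_\mu)$ against $u_\mu$ itself. Four ingredients enter: (i) hypothesis (\ref{Lambda-cond.0}) kills $\E\int_0^T\langle u_\mu,\Lambda u_\mu\rangle\,dt$; (ii) the coercive pairing $\E\int_0^T\langle u_\mu, K u_\mu\rangle\,dt$ controls $u_\mu$ in $L^\alpha_V(\Omega_T)$; (iii) It\^o's formula applied to $\Vr u_\mu(t)\Vr_H^2$ evaluates $\E\int_0^T\langle u_\mu,\tilde u_\mu\rangle\,dt$, and together with $F_{u_\mu}=Bu_\mu$ plus the subcritical growth (\ref{B-bound}) (using $2\delta<\alpha+1$) the term $\Vr Bu_\mu\Vr^2$ can be absorbed; (iv) the integration-by-parts computation at the end of Lemma \ref{psi-lem}, combined with the boundary conditions $\tilde u_\mu(0)=\tilde u_\mu(T)=0$, gives the non-negative contribution $\E\int_0^T\langle u_\mu,\mu\partial\psi(u_\mu)\rangle\,dt=\mu\beta\psi(u_\mu)\geq 0$. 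Together with (\ref{L10}), these yield uniform bounds on $u_\mu$ in $L^\alpha_V(\Omega_T)$, on $u_\mu(T)$ in $L^2_H(\Omega)$, on $F_{u_\mu}=Bu_\mu$ in $L^2_H(\Omega_T)$, and on $\mu\psi(u_\mu)$; then (\ref{L20}), (\ref{Lambda-cond}) and the definition of $K$ bound the sum $\tilde u_\mu+\mu\partial\psi(u_\mu)$ in $L^\beta_{V^*}(\Omega_T)$.

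Next, I would extract a subsequence along which $u_\mu\rightharpoonup u$ in $L^\alpha_V(\Omega_T)$, $F_{u_\mu}\rightharpoonup F_u$ in $L^2_H(\Omega_T)$, $u_\mu(T)\rightharpoonup u(T)$ in $L^2_H(\Omega)$, $\Lambda u_\mu\rightharpoonup \Lambda u$ and $Ku_\mu\rightharpoonup Ku$ in $L^\beta_{V^*}(\Omega_T)$ by the weak-to-weak continuity assumptions, while $Bu_\mu\to Bu$ \emph{strongly} in $L^2_H(\Omega_T)$ by the weak-to-norm continuity of $B$. Since $u_\mu(0)=u_0$, one has $u(0)=u_0$, and passing to the limit in $F_{u_\mu}=Bu_\mu$ gives $F_u=Bu$. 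The It\^o decomposition $u_\mu(t)-u_0-\int_0^t Bu_\mu\,dW=\int_0^t\tilde u_\mu\,ds$ passes to the limit, yielding a progressively measurable drift $\tilde u$ identified as the weak limit of $\tilde u_\mu+\mu\partial\psi(u_\mu)$, and showing in particular that $u\in\mathcal{Y}^\alpha_V$.

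Finally, I would pass to the limit in the variational identity. By Lemma \ref{psi-lem}, together with the Fenchel equality case there, we have
\[
\mathcal{L}\bigl(u_\mu,-\tilde u_\mu-\Lambda u_\mu-Ku_\mu-\mu\partial\psi(u_\mu)\bigr)+\E\int_0^T\langle u_\mu,\tilde u_\mu+\Lambda u_\mu+Ku_\mu+\mu\partial\psi(u_\mu)\rangle\,dt=0.
\]
Using the weak lower semicontinuity of $\mathcal{L}$, the convergence of $\Lambda u_\mu$ and $Ku_\mu$, It\^o's formula (whose right-hand side is continuous under the established weak/strong convergences) to deal with $\E\int_0^T\langle u_\mu,\tilde u_\mu\rangle\,dt$, and the vanishing $\mu\beta\psi(u_\mu)\to 0$ (since $\mu\psi(u_\mu)$ is uniformly bounded and $\mu\downarrow 0$), taking $\liminf$ gives
\[
\mathcal{L}(u,-\tilde u-\Lambda u-Ku)+\E\int_0^T\langle u,\tilde u+\Lambda u+Ku\rangle\,dt\leq 0.
\]
The reverse inequality is automatic by Fenchel--Young and the self-duality of $\mathcal{L}$, so we conclude equality.

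The \textbf{main obstacle} is that $\tilde u_\mu$ need not itself be uniformly bounded in $L^\beta_{V^*}(\Omega_T)$; only the combination $\tilde u_\mu+\mu\partial\psi(u_\mu)$ is. One must therefore carefully argue that the weak limit of this combination coincides with the genuine drift $\tilde u$ of the limit $u\in\mathcal{Y}^\alpha_V$, relying on the It\^o decomposition (with $u(0)=u_0$ fixed and $F_{u_\mu}\to Bu$ strongly) to convert the weak limit in $L^\beta_{V^*}$ into the actual drift of the limit process, and on the boundary conditions $\tilde u_\mu(0)=\tilde u_\mu(T)=0$ to ensure that the regularizing term contributes a non-negative, quantitatively vanishing, quantity rather than an uncontrolled perturbation.
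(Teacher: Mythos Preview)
Your overall architecture matches the paper's: start from $u_\mu$ given by Lemma \ref{psi-lem}, establish $\mu$-independent bounds, extract weak limits, and pass to the limit in the self-dual identity. The difference lies precisely at the point you flag as the ``main obstacle,'' and there your workaround does not close the gap.

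You bound only the combination $w_\mu:=\tilde u_\mu+\mu\,\partial\psi(u_\mu)$ in $L^\beta_{V^*}(\Omega_T)$ and then assert that the It\^o decomposition
\[
u_\mu(t)-u_0-\int_0^t Bu_\mu\,dW=\int_0^t \tilde u_\mu(s)\,ds
\]
``passes to the limit, yielding a drift $\tilde u$ identified as the weak limit of $\tilde u_\mu+\mu\partial\psi(u_\mu)$.'' But the right-hand side involves $\tilde u_\mu$, not $w_\mu$; weak convergence of $w_\mu$ says nothing about weak convergence of $\int_0^t\tilde u_\mu\,ds$ unless you separately show that $\mu\int_0^t\partial\psi(u_\mu)\,ds\to 0$, or that $\tilde u_\mu$ itself is bounded. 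From your estimates you only get $\mu\Vr\tilde u_\mu\Vr^\beta_{L^\beta_{V^*}}\leq C$, i.e.\ $\Vr\tilde u_\mu\Vr_{L^\beta_{V^*}}\lesssim \mu^{-1/\beta}$, which blows up. Without a bound on $\tilde u_\mu$ alone you cannot conclude that the weak limit $u$ lies in $\mathcal{Y}^\alpha_V$, nor that its drift equals the weak limit of $w_\mu$.

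The paper fills this gap by a second test: it pairs the inclusion
\[
\tilde u_\mu+\Lambda u_\mu+Ku_\mu+\mu\,\partial\psi(u_\mu)\in -\bar\partial\mathcal{L}(u_\mu)
\]
against $D^{-1}\tilde u_\mu$. The point is that the explicit identification of $\partial\psi(u_\mu)$ as (minus) the time derivative of $\Vr\tilde u_\mu\Vr^{\beta-2}_{V^*}D^{-1}\tilde u_\mu$, together with the boundary conditions $\tilde u_\mu(0)=\tilde u_\mu(T)=0$ obtained in Lemma \ref{psi-lem}, makes the dangerous term vanish:
\[
\E\int_0^T\langle \partial\psi(u_\mu),D^{-1}\tilde u_\mu\rangle\,dt=0.
\]
What remains gives
\[
\Vr\tilde u_\mu\Vr_{L^\beta_{V^*}}\leq f\big(\Vr u_\mu\Vr_{L^\alpha_V}\big)+\Vr Ku_\mu\Vr_{L^\beta_{V^*}}+C\Vr u_\mu\Vr^{\alpha-1}_{L^\alpha_V},
\]
a genuine $\mu$-independent bound on $\tilde u_\mu$. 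With this in hand, $(u_\mu)$ is bounded in the full $\mathcal{Y}^\alpha_V$ norm, one extracts $\tilde u_\mu\rightharpoonup\tilde u$ directly, and the passage to the limit in the self-dual identity goes through by weak lower semi-continuity of $\mathcal{L}$ together with $\langle u_\mu,\mu\partial\psi(u_\mu)\rangle=\mu\Vr\tilde u_\mu\Vr^\beta\geq 0$ and bounded, exactly as you outlined. In short: your use of the boundary conditions in step (iv) only secures non-negativity of the pairing $\langle u_\mu,\mu\partial\psi(u_\mu)\rangle$; the paper exploits those same boundary conditions once more, in a different pairing, to obtain the missing a priori bound on $\tilde u_\mu$.
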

\begin{proof}
Lemma \ref{psi-lem} yields that for every $\mu>0$ there exist $u_\mu \in \mathcal{Y}^\alpha_V$ such that $u_\mu (0)=u_0$, $\tilde{u}_\mu(T)=\tilde{u}_\mu(0)=0$, and satisfying
\begin{align}\label{multiply-eqt}
\tilde{u}_\mu+\Lambda u_\mu+Ku_\mu +\mu \, \partial \psi(u_\mu) &\in -\bar{\partial}\mathcal{L}(u_\mu) \\
\nonumber F_{u_\mu}(t)=Bu_\mu(t).
\end{align}  
Now we show that $u_\mu$ is bounded in $\mathcal{Y}^\alpha_V$ with bounds independent of $\mu$. Indeed, multiplying (\ref{multiply-eqt}) by  $u_\mu$ and integrating over $\Omega\times [0,T]$, we obtain
\begin{align*}
\E \int_0^T \Big\langle \tilde{u}_\mu(t)+\Lambda u_\mu+Ku_\mu(t) +\mu \, \partial \psi(u_\mu(t)),u_\mu \Big\rangle &=- \E \int_0^T \langle \bar{\partial}\mathcal{L}(u_\mu), u_\mu\rangle dt.  
\end{align*}
Apply It\^o's formula and use the fact that $\E \int_0^T \langle \mu \, \partial \psi(u_\mu(t)), u_\mu \rangle \, dt \geq 0$ and $\E \int_0^T \langle \Lambda u(t),u(t) \rangle dt = 0$ to get
\begin{align*}
 -\frac{1}{2} \, \Vr u_{\mu,0} \Vr_{L^2(\Omega; H)}^2 + \frac{1}{2} \, \Vr u_\mu(T) \Vr_{L^2(\Omega; H)}^2 &-\frac{1}{2}  \Vr F_{u_{\mu}} \Vr^2_{L^2_H(\Omega_T)}+\Vr u_\mu\Vr^{\alpha+1}_{L^\alpha_V(\Omega_T)} \\ &= -\E \int_0^T \langle \mu \, \partial \psi(u_\mu)+\bar{\partial}\mathcal{L}(u_\mu ), u_\mu \rangle \, dt\\
 & \leq -\E \int_0^T \langle \bar{\partial}\mathcal{L}(u_\mu), u_\mu \rangle \, dt.
\end{align*}
Since for $u_\mu \in \mathcal{Y}^\alpha_V$ we have $u_\mu \in L^\infty(0,T;H)$, then in view of (\ref{L20}), we get 
\begin{align*}
C_1 + \Vr u_\mu\Vr^{\alpha+1}_{L^\alpha_V(\Omega_T)} &\leq \Vr \bar{\partial}\mathcal{L}(u_\mu) \Vr_{L^\beta_{V^*}(\Omega_T)} \Vr u_\mu \Vr_{L^\alpha_V(\Omega_T)}\\
&\leq C \, \Vr u_\mu \Vr^\alpha _{L^\alpha_V(\Omega_T)}.
\end{align*}
The above inequality implies that $\Vr u_\mu \Vr_{L^\alpha_V(\Omega_T)}$ is bounded.\\
Next, we multiply (\ref{multiply-eqt}) by $D^{-1}\tilde{u}_\mu$ and integrate over $\Omega_T$ to get that
\begin{align*}
0&=\E \int_0^T \Big\langle \tilde{u}_\mu(t)+ \Lambda u_\mu+Ku_\mu(t) +\mu \, \partial \psi(u_\mu(t)) +\bar{\partial}\mathcal{L}(t,u_\mu),D^{-1}\tilde{u}_\mu \Big\rangle \, dt
\end{align*}
From (\ref{tilde-integ-byparts}), and choosing $v=\Vr\tilde{u}_\mu\Vr^{\beta-2}_{V^*} D^{-1}\tilde{u}_\mu$ with $\tilde{v}= \frac{d}{dt}(\Vr\tilde{u}_\mu\Vr^{\beta-2}_{V^*} D^{-1}\tilde{u}_\mu)$ and $F_v=0$, we get that $\,\E \int_0^T \langle \partial \psi(u_\mu(t)), D^{-1}\tilde{u}_\mu \rangle \, dt =0$, which together with condition(\ref{L20}) imply that
\begin{align*}
\Vr \tilde{u}_\mu\Vr^2_{L^\beta_{V^*}} \leq \Vr \Lambda u_\mu \Vr_{L^\beta_{V^*}} \Vr \tilde{u}_\mu \Vr_{L^\beta_{V^*}} +\Vr Ku_\mu \Vr_{L^\beta_{V^*}} \Vr \tilde{u}_\mu \Vr_{L^\beta_{V^*}} + C \, \Vr u_\mu \Vr^{\alpha-1}_{L^\alpha_V} \Vr \tilde{u}_\mu \Vr_{L^\beta_{V^*}},
\end{align*}
hence by condition (\ref{Lambda-cond}) we have
\begin{align*}
\Vr \tilde{u}_\mu\Vr_{L^\beta_{V^*}(\Omega_T)} \leq f(\Vr u_\mu \Vr_{L^\alpha_V(\Omega_T)})+\Vr Ku_\mu \Vr_{L^\beta_{V^*}(\Omega_T)} + C\, \Vr u_\mu \Vr^{\alpha-1}_{L^\alpha_V(\Omega_T)}
\end{align*}
which means that $\Vr \tilde{u}_\mu\Vr_{L^\beta_{V^*}(\Omega_T)}$ is bounded. From (\ref{B-bound}) and  since $F_{u_\mu}=Bu_\mu$ we deduce that $\Vr F_{u_\mu}\Vr_{L^2_{H}(\Omega_T)}$ is also bounded.
Now since $(u_\mu)_\mu$ is bounded in $\mathcal{Y}^\alpha_V$, there exists $u\in \mathcal{Y}^\alpha_V$ such that $u_\mu \rightharpoonup u$ weakly in $\mathcal{Y}^\alpha_V$, which means that $u_\mu \rightharpoonup u$ weakly in $L^\alpha_V(\Omega_T)$, $\tilde{u}_\mu \rightharpoonup \tilde{u}$ weakly in $L^\beta_{V^*}(\Omega_T)$, and $F_{u_\mu} \rightharpoonup F_u$ weakly in $L^2_H(\Omega_T)$. From (\ref{multiply-eqt}) and since $B$ is weak-to-norm continuous we have $F_u=Bu$. Then, by (\ref{mu-lag}) we obtain
\begin{align*}
0&= \mathcal{L}( u_\mu  ,-\tilde{u}_\mu-\Lambda u_\mu-Ku_\mu -\mu \, \partial \psi(u_\mu)) \\
& \qquad + \E \int_0^T \Big\langle  u_\mu(t), \tilde{u}_\mu(t)+\Lambda u_\mu+Ku_\mu(t) +\mu \, \partial \psi(u_\mu(t)
\Big\rangle \, dt\\
& \geq \mathcal{L}( u_\mu,-\tilde{u}_\mu-\Lambda u_\mu -Ku_\mu -\mu \, \partial \psi(u_\mu)) + \E \int_0^T \langle  u_\mu(t), \tilde{u}_\mu(t)+\Lambda u_\mu+Ku_\mu(t) \rangle \, dt.
\end{align*}
Since $\Lambda+K$ is weak-to-weak continuous, $\langle \partial \psi(u_\mu)), u_\mu \rangle = \Vr \tilde{u}_\mu \Vr^\beta_{L^\beta_{V^*}}$ is uniformly bounded, and ${\cal L}$ is weakly lower semi-continuous on $L^\alpha_V\times L^\beta_{V^*}$,  we get
\begin{align*}
0 &\geq \liminf_{\mu \rightarrow 0}\, \mathcal{L}( u_\mu,-\tilde{u}_\mu-\Lambda u_\mu-Ku_\mu -\mu \, \partial \psi(u_\mu))+ \E \int_0^T \langle  u_\mu(t), \tilde{u}_\mu(t)+\Lambda u_\mu+Ku_\mu(t) \rangle \, dt\\
&\geq \mathcal{L}( u,-\tilde{u}-\Lambda u-Ku) + \E \int_0^T \langle  u(t), \tilde{u}(t)+\Lambda u+Ku(t) \rangle \, dt.
\end{align*}
Since ${\cal L}$ is a self-dual Lagrangian on $L^\alpha_V\times L^\beta_{V^*}$, the reverse inequality is always true, and therefore
$$\mathcal{L}( u,-\tilde{u}-\Lambda u-Ku) + \E \int_0^T \langle u(t), \tilde{u}(t)+\Lambda u+Ku(t) \rangle \, dt=0,$$
which completes the proof of Lemma \ref{mu-lemma}.
\end{proof}
\subsection{End of proof of Theorem \ref{last-thm}}
We shall work toward eliminating the perturbation $K$. By Lemma \ref{mu-lemma}, for each $\varepsilon >0$, there exists a $u_\varepsilon \in G$ such that $F_{u_\varepsilon}=Bu_\varepsilon$ and
\begin{equation}\label{epsilon-eqt1}
\mathcal{L}( u_\varepsilon,-\tilde{u}_\varepsilon-\Lambda u_\varepsilon-\varepsilon Ku_\varepsilon) + \E \int_0^T \langle  u_\varepsilon(t), \tilde{u}_\varepsilon(t)+\Lambda u_\varepsilon+\varepsilon Ku_\varepsilon(t) \rangle \, dt=0,
\end{equation}
or equivalently
\begin{equation}\label{epsilon-eqt2}
\tilde{u}_\varepsilon+\Lambda u_\varepsilon+\varepsilon Ku_\varepsilon \in -\bar{\partial} \mathcal{L}(u_\varepsilon).
\end{equation}
Similar to the argument in Lemma \ref{mu-lemma} we show that $u_\varepsilon$ is bounded in $\mathcal{Y}^\alpha_V$ with bounds independent of $\varepsilon$. First, we multiply (\ref{epsilon-eqt2}) by $u_\varepsilon$ and integrate over $\Omega_T$ to obtain
\begin{align*}
\E \int_0^T \langle \tilde{u}_\varepsilon(t)+\Lambda u_\varepsilon+\varepsilon Ku_\varepsilon(t), u_\varepsilon(t)  \rangle \, dt &= -\E \int_0^T \langle \bar{\partial} \mathcal{L}(u_\varepsilon), u_\varepsilon \rangle \, dt \\
& \leq \Vr \bar{\partial}{{\cal L}}(u_\varepsilon) \Vr_{L^\beta_{V^*}(\Omega_T)} \Vr u_\varepsilon \Vr_{L^\alpha_V(\Omega_T)}\\
&\leq C \, \Vr u_\varepsilon \Vr^\alpha _{L^\alpha_V(\Omega_T)},
\end{align*}
where we used (\ref{L20}). In view of (\ref{epsilon-eqt1}) and (\ref{L10}), this implies that
$$C (\Vr u_\varepsilon \Vr^\alpha_{L^\alpha_V(\Omega_T)}-1) \leq \mathcal{L}( u_\varepsilon,-\tilde{u}_\varepsilon-\Lambda u_\varepsilon-\varepsilon Ku_\varepsilon)\leq C \, \Vr u_\varepsilon \Vr^\alpha_{L^\alpha_V(\Omega_T)},$$
from which we deduce that $u_\varepsilon$ is bounded in $L^\alpha_V(\Omega_T)$. Next, we multiply (\ref{epsilon-eqt2}) by $D^{-1}\tilde{u}_\varepsilon$ to obtain
\begin{align*}
\E \int_0^T \langle \tilde{u}_\varepsilon(t)+\Lambda u_\varepsilon+\varepsilon Ku_\varepsilon(t), D^{-1}\tilde{u}_\varepsilon(t)  \rangle &= -\E \int_0^T \langle \bar{\partial} \mathcal{L}(u_\varepsilon), D^{-1}\tilde{u}_\varepsilon(t) \rangle \, dt, 
\end{align*}
and therefore similar to the reasoning as in Lemma \ref{mu-lemma} we deduce that
\begin{align*}
\Vr \tilde{u}_\varepsilon\Vr_{L^\beta_{V^*}(\Omega_T)} \leq f\Big(\Vr u_\varepsilon \Vr_{L^\alpha_V(\Omega_T)}\Big)+\varepsilon \Vr Ku_\varepsilon \Vr_{L^\beta_{V^*}(\Omega_T)} + C \, \Vr u_\varepsilon \Vr^{\alpha-1}_{L^\alpha_V(\Omega_T)}.
\end{align*}
Hence $\tilde{u}_\varepsilon$ is bounded in $L^\beta_{V^*}(\Omega_T)$, and there exists $u \in \mathcal{Y}_V^\alpha$ such that $u_\varepsilon \rightharpoonup u$ weakly in $L^\alpha_V(\Omega_T)$, and $\tilde{u}_\varepsilon \rightharpoonup \tilde{u}$ weakly in $L^\beta_{V^*}(\Omega_T)$, and $F_{u_\varepsilon} \rightharpoonup F_u$ weakly in $L^2_H(\Omega_T)$. Moreover,  
\begin{align*}
0&= \mathcal{L}( u_\varepsilon,-\tilde{u}_\varepsilon-\Lambda u_\varepsilon-\varepsilon Ku_\varepsilon)) + \E \int_0^T \langle  u_\varepsilon(t), \tilde{u}_\varepsilon(t)+\Lambda u_\varepsilon+Ku_\varepsilon(t) \rangle  \, dt\\
& \geq \mathcal{L}( u_\varepsilon,-\tilde{u}_\varepsilon-\Lambda u_\varepsilon-\varepsilon Ku_\varepsilon) + \E \int_0^T \langle  u_\varepsilon(t), \tilde{u}_\varepsilon(t)+\Lambda u_\varepsilon\rangle \, dt.
\end{align*}
Again, ${\cal L}$ is weakly lower semi-continuous on $L^\alpha_V\times L^\beta_{V^*}$, therefore by letting $\varepsilon \rightarrow 0$ we get
\begin{align*}
0 \geq \mathcal{L}( u,-\tilde{u}-\Lambda u) + \E \int_0^T \langle  u(t), \tilde{u}(t)+\Lambda u \rangle \, dt.
\end{align*}
Since the reverse inequality is always true we have
$$\mathcal{L}( u,-\tilde{u}-\Lambda u)+ \E \int_0^T \langle u(t), \tilde{u}(t)+\Lambda u \rangle \, dt=0,$$
and also $F_u(t)=Bu(t)$. By the limiting case of Legendre duality, we now have for a.e. $t\in [0,T]$, $\P$-a.s.
$\tilde{u}+\Lambda u  \in -\bar{\partial}\mathcal{L}( u)$, integrating over $[0,t]$ with the fact that $\int_0^t \tilde{u}(s) ds =u(t)- u_0 -\int_0^t F_u(s) dW(s)$, and $F_u(t)=Bu(t)$ we obtain 
\begin{align*}
u(t)&=u_0-\int_0^t \Big(\bar{\partial}{{\cal L}}(u)(s)+\Lambda u(s)\Big) ds + \int_0^t B(u(s))dW(s).
\end{align*}
\section{Non-additive noise driven by monotone vector fields\label{last.section}}


Consider the following type of equations
 \begin{equation}\label{last-SDE}
\begin{cases}
du(t)= -A (t,u(t)) dt -\Lambda u(t) dt+ B(t, u(t)) dW(t)\\
u(0)=u_0,
\end{cases}
\end{equation}
where $V\subset H\subset V^*$ is a Gelfand triple, and 
$A:\Omega\times [0,T] \times V \rightarrow V^*$ and $B:\Omega\times [0,T] \times V \rightarrow H$
are progressively measurable. 

\begin{thm} \label{mono-thm} Assume $A: D(A)\subset V\to V^*$ is a progressively measurable $\Omega_T$-dependent maximal monotone operator satisfying condition (\ref{A-cond2}) with $\alpha>1$ and its conjugate $\beta$, as well as 
\begin{equation}\label{A-Cond1}
\Vr A_{w,t}u\Vr_{V^*} \leq k (\omega,t)(1+\Vr u \Vr_V)\quad \hbox{for all $u\in V$, $dt \otimes \P$ a.s.}
\end{equation}
for some $k\in L^\infty(\Omega_T)$.\\
Let $B:  \mathcal{Y}^\alpha_V \rightarrow L^2(\Omega_T;H)$ be a weak-to-norm continuous map satisfying (\ref{B-bound}) and  $\Lambda: \mathcal{Y}^\alpha_V\to L^\beta(\Omega_T;V^*)$  is a weak-to-weak continuous map satisfying (\ref{Lambda-cond.0}) and (\ref{Lambda-cond}). \\
Let $u_0$ be a given random variable in $L^2_H(\Omega,\F_0,\mathbb{P};H)$, then equation (\ref{last-SDE}) 
has a variational solution in $ \mathcal{Y}^\alpha_V$.
\end{thm}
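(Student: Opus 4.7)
The plan is to reduce Theorem~\ref{mono-thm} to Theorem~\ref{last-thm} by producing a self-dual Lagrangian $\mathcal{L}$ on $L^\alpha_V(\Omega_T)\times L^\beta_{V^*}(\Omega_T)$ whose self-dual vector field $\bar{\partial}\mathcal{L}$ coincides pointwise with $A$. Once such an $\mathcal{L}$ is in hand, the hypotheses on $B$ and $\Lambda$ in the present theorem match verbatim those of Theorem~\ref{last-thm}, so any process produced there will automatically be a variational solution of \eqref{last-SDE}.

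The first step is to invoke Fitzpatrick's representation (Theorem~\ref{Fitz}) fibrewise in $(\omega,t)$: to the maximal monotone operator $A_{\omega,t}:V\to 2^{V^*}$ one attaches a self-dual Lagrangian $L_{A_{\omega,t}}$ on $V\times V^*$ satisfying $A_{\omega,t}=\bar{\partial}L_{A_{\omega,t}}$. As indicated in Section~\ref{mon.rep}, this correspondence can be chosen progressively measurably in $(\omega,t)$. Lifting to path space via
\[
\mathcal{L}(u,p):=\E\int_0^T L_{A_{\omega,t}}(u(\omega,t),p(\omega,t))\,dt
\]
then yields a self-dual Lagrangian on $L^\alpha_V(\Omega_T)\times L^\beta_{V^*}(\Omega_T)$ whose self-dual vector field acts pointwise as $\bar{\partial}\mathcal{L}(u)(\omega,t)=A_{\omega,t}(u(\omega,t))$.

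Next, I verify the two abstract bounds needed by Theorem~\ref{last-thm}. The two-sided estimate \eqref{L10} is precisely the conclusion of Lemma~\ref{lem-AtoL} applied to the coercivity assumption \eqref{A-cond2}. For the growth condition \eqref{L20}, condition \eqref{A-Cond1} gives the pointwise bound $\Vr A_{\omega,t}u\Vr_{V^*}\le k(\omega,t)(1+\Vr u\Vr_V)$; raising to the power $\beta$, integrating over $\Omega_T$, and using that $\Omega_T$ has finite measure together with $\beta\le\alpha$ (the natural regime being $\alpha\ge 2$), one arrives at the bound $\Vr\bar{\partial}\mathcal{L}(u)\Vr_{L^\beta_{V^*}(\Omega_T)}\le C(1+\Vr u\Vr_{L^\alpha_V(\Omega_T)}^{\alpha-1})$, which is \eqref{L20}.

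With \eqref{L10} and \eqref{L20} in place, Theorem~\ref{last-thm} produces $u\in\mathcal{Y}^\alpha_V$ such that
\[
u(t)=u_0-\int_0^t\bigl(\bar{\partial}\mathcal{L}(u)(s)+\Lambda u(s)\bigr)ds+\int_0^t B(s,u(s))\,dW(s),
\]
and since $\bar{\partial}\mathcal{L}(u)(s)=A(s,u(s))$ $dt\otimes\P$-a.e., this process is the sought variational solution of \eqref{last-SDE}. The main obstacle in carrying out this plan is the joint progressive measurability of the Fitzpatrick correspondence $A_{\omega,t}\mapsto L_{A_{\omega,t}}$, which must be arranged for the lifted $\mathcal{L}$ to be well-defined; this is handled by the measurable selection arguments of \cite{GMS} already invoked in Section~\ref{mon.rep}. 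The remaining exponent bookkeeping between \eqref{A-Cond1} and \eqref{L20} is routine.
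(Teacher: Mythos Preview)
Your proposal is correct and follows essentially the same route as the paper's own proof: associate to $A_{\omega,t}$ a progressively measurable self-dual Lagrangian via Fitzpatrick, lift it to $L^\alpha_V(\Omega_T)\times L^\beta_{V^*}(\Omega_T)$, invoke Lemma~\ref{lem-AtoL} for the two-sided bound \eqref{L10}, derive \eqref{L20} from \eqref{A-Cond1}, and then apply Theorem~\ref{last-thm}. Your added remark that the passage from \eqref{A-Cond1} to \eqref{L20} is clean in the regime $\alpha\ge 2$ (so that $\beta\le\alpha$ and H\"older applies on the finite-measure space $\Omega_T$) is a detail the paper glosses over.
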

\begin{proof} Associate again to  $A_{\omega,t}$ an $\Omega_T$-dependent self-dual Lagrangian $L_{A_{\omega,t}}(u,p)$ on $V \times V^*$ in such a way that  for almost every $t\in [0, T]$, $\P$-a.s. 
we have $A_{\omega,t}=\bar{\partial}L_{A_{\omega,t}}$. 
Then by Lemma \ref{lem-AtoL},
the Lagrangian 
$$\mathcal{L}_A(u,p)=\E \int_0^T L_{A_{\omega,t}}(u(\omega,t),p(\omega,t))dt$$
is self-dual on $L^\alpha(\Omega_T;V) \times L^\beta(\Omega_T;V^*)$, 
and satisfies \begin{align*}\label{L1}
 C_1 (\Vr u \Vr^\alpha_{L^\alpha_V(\Omega_T)}+\Vr p \Vr^\beta_{L^\beta_{V^*}(\Omega_T)}-1) \leq  \mathcal{L}(u,p)
\leq C_2(1+\Vr u \Vr^\alpha_{L^\alpha_V(\Omega_T)}+\Vr p \Vr^\beta_{L^\beta_{V^*}(\Omega_T)}).
\end{align*}
(\ref{A-Cond1}) also implies that for some $C_3>0$,
\begin{equation*}\label{2}
\Vr \bar{\partial}{\cal L}_{\cal A}(u)\Vr_{L^\beta_{V^*}(\Omega_T)} \leq C_3(1+ \Vr u \Vr^{\alpha-1}_{L^\alpha_V(\Omega_T)}).
\end{equation*}
The rest follows from Theorem \ref{last-thm}.
\end{proof}


The first immediate application is the following case when the equation is driven by the gradient of a convex function. 
\begin{thm}\label{phi-not-add} Let $V\subset H\subset V^*$ be a Gelfand triple, and let $\varphi: V\to \R\cup\{+\infty\}$ be an $\Omega_T$-dependent  convex lower semi-continuous function on $V$ such that for $\alpha >1$ and some constants $C_1, C_2>0$, for every $t\in [0, T]$, $\mathbb{P}$-a.s. we have
\begin{equation}\label{phi-bound}
C_2(\|u\|^\alpha_{L^\alpha_V(\Omega_T)}-1) \leq \E \int_0^T \varphi (t, u(t)) \, dt \leq C_1(1+\|u\|^\alpha_{L^\alpha_V(\Omega_T)}).
\end{equation}
and 
\begin{equation}\label{pphi}
\Vr {\partial}\varphi(u)\Vr_{L^\beta_{V^*}(\Omega_T)} \leq C_3(1+ \Vr u \Vr^{\alpha-1}_{L^\alpha_V(\Omega_T)}).
\end{equation}
Consider the equation
\begin{equation}\label{simple}
\begin{cases}
du(t)+\Lambda u(t) dt=-\partial \varphi (u(t) dt+ B(u(t))\, dW(t)\\
u(0)=u_0, 
\end{cases}
\end{equation}
where $B:  \mathcal{Y}^\alpha_V \rightarrow L^2(\Omega_T;H)$ be a weak-to-norm continuous map satisfying (\ref{B-bound}) and  $\Lambda: \mathcal{Y}^\alpha_V\to L^\beta(\Omega_T;V^*)$  is a weak-to-weak continuous map satisfying (\ref{Lambda-cond.0}) and (\ref{Lambda-cond}). 
Let $u_0$ be a random variable in $L^2(\Omega,\F_0,\mathbb{P};H)$, then Equation (\ref{simple}) 
has a solution $u$ in $ \mathcal{Y}^\alpha_V$.
\end{thm}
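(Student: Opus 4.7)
The plan is to deduce Theorem~\ref{phi-not-add} as an immediate corollary of Theorem~\ref{last-thm} by exhibiting an explicit self-dual Lagrangian whose self-dual vector field is $\partial\varphi$. Define the $\Omega_T$-dependent Lagrangian on $V\times V^*$ by
\[
L_\varphi(\omega,t,u,p) := \varphi(\omega,t,u) + \varphi^*(\omega,t,p),
\]
where $\varphi^*(\omega,t,\cdot)$ is the Fenchel conjugate of $\varphi(\omega,t,\cdot)$. Progressive measurability of $\varphi$ transfers to $\varphi^*$ (via a standard measurable selection, since the sup defining $\varphi^*$ may be taken over a countable dense subset of $V$), so $L_\varphi$ is a progressively measurable self-dual Lagrangian on $V\times V^*$ with $\bar{\partial}L_\varphi=\partial\varphi$. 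Lift it to
\[
\mathcal{L}(u,p) := \E\int_0^T L_\varphi(t,u(t),p(t))\,dt
\]
on $L^\alpha(\Omega_T;V)\times L^\beta(\Omega_T;V^*)$; this is self-dual by the standard lifting result of Section~\ref{lift}, and $\bar{\partial}\mathcal{L}(u) = \partial\varphi(u)$.

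The next step is to check that $\mathcal{L}$ verifies the two structural hypotheses of Theorem~\ref{last-thm}. The upper bound on $\varphi$ in (\ref{phi-bound}) gives, via part (2) of Lemma~\ref{boundedness-L} applied pointwise in $(\omega,t)$, that $\varphi^*(\omega,t,p)\geq D(\|p\|_{V^*}^\beta-1)$; combined with the coercivity $\varphi(\omega,t,u)\geq C_2(\|u\|_V^\alpha-1)$, it yields
\[
D_2\bigl(\|u\|_{L^\alpha_V(\Omega_T)}^\alpha + \|p\|_{L^\beta_{V^*}(\Omega_T)}^\beta - 1\bigr)\ \leq\ \mathcal{L}(u,p)\ \leq\ D_1\bigl(1+\|u\|_{L^\alpha_V(\Omega_T)}^\alpha + \|p\|_{L^\beta_{V^*}(\Omega_T)}^\beta\bigr),
\]
which is exactly (\ref{L10}). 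Hypothesis (\ref{pphi}) is word-for-word the growth bound (\ref{L20}) on $\bar{\partial}\mathcal{L}=\partial\varphi$.

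All of the remaining hypotheses of Theorem~\ref{last-thm}, namely the weak-to-norm continuity of $B$ with the sub-growth (\ref{B-bound}), the weak-to-weak continuity of $\Lambda$ with (\ref{Lambda-cond.0}) and (\ref{Lambda-cond}), as well as $u_0\in L^2(\Omega,\mathcal{F}_0,\P;H)$, are assumed in the statement of Theorem~\ref{phi-not-add}. Theorem~\ref{last-thm} therefore produces a process $u\in\mathcal{Y}^\alpha_V$ with $u(0)=u_0$, $F_u=B(u)$, and
\[
u(t) = u_0 - \int_0^t \bigl(\bar{\partial}\mathcal{L}(u)(s) + \Lambda u(s)\bigr)\,ds + \int_0^t B(u(s))\,dW(s),
\]
which, since $\bar{\partial}\mathcal{L}(u)=\partial\varphi(u)$, is exactly a variational solution of (\ref{simple}).

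The only steps that require care are the measurable-selection aspect of the correspondence $\varphi\mapsto L_\varphi$ (handled by the standard fact that the Legendre transform of a jointly measurable normal convex integrand is again a normal convex integrand) and the verification that the \emph{pointwise} two-sided Lagrangian bounds of Lemma~\ref{boundedness-L} integrate up to the corresponding \emph{global} bounds on the lifted space; the hypothesis (\ref{phi-bound}) is stated directly in the lifted form, which eliminates any delicacy. No coercivity or continuity argument needs to be re-run: the heavy lifting has already been done in Theorem~\ref{last-thm}, and Theorem~\ref{phi-not-add} is essentially a translation from the language of convex functions to the language of self-dual Lagrangians.
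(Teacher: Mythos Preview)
Your proof is correct and matches the paper's approach: the paper presents Theorem~\ref{phi-not-add} as an immediate application of the preceding machinery (without writing out a proof), and you have simply made the reduction explicit by taking the canonical self-dual Lagrangian $L_\varphi=\varphi+\varphi^*$ and invoking Theorem~\ref{last-thm}. The verification of (\ref{L10}) from (\ref{phi-bound}) via Fenchel duality and the identification of (\ref{pphi}) with (\ref{L20}) are exactly the points the paper leaves implicit.
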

\begin{ex}
Let $D \subset \R^n$ be an open bounded domain, then the  SPDE
\begin{equation}\label{exm-delta}
\begin{cases}
du(t)=\Delta u \, dt+ |u|^{q-1}u \, dW  \qquad & \textnormal{in} \ [0,T] \times D\\
u(t, x)=0  \qquad & \textnormal{in} \ [0,T] \times \partial D\\
u(0, x)=u_0(x)  \qquad & \textnormal{on} \ D,
\end{cases}
\end{equation}
has a solution provided $\frac{1}{2}\leq q < \frac{n}{n-2}$.
\begin{proof}
We take $Bu = |u|^{q-1}u$ and set $H=L^2(D)$.\\

\noindent 1) $\frac{1}{2}\leq q \leq 1$.  Take $\alpha=2$, $V=H^1_0(D), H=L^2(D)$ and $\varphi (u)=\frac{1}{2}\int_D|\nabla u|^2$. As long as $1\leq2q <2^*$, that is $\frac{1}{2}\leq q<\frac{n}{n-2}$, then $B$ is weak-to-norm continuous from $\mathcal{Y}^2_V$ to $L^2(\Omega_T,H)$. Since $0\leq 4q-2\leq 2$ we have 
\begin{align*}
\Vr Bu \Vr_{L^2_H(\Omega_T)} =\left(\E \int_0^T \Vr u^q \Vr^2_{L^2(D)} \, dt\right)^{\frac{1}{2}}
  \leq C \Vr u \Vr_{L^2_{V}}^{\frac{1}{2}} \ \Vr u \Vr_{L^{4q-2}_{V}}^{q-\frac{1}{2}}
 \leq C \Vr u \Vr_{L^2_{V}}^{\frac{1}{2}} \ \Vr u \Vr_{L^{2}_{V}}^{q-\frac{1}{2}}
 \leq C \Vr u \Vr_{L^2_{V}}^q, 
\end{align*}
which is the condition required by Theorem \ref{phi-not-add} with $\delta =q <\frac{3}{2}=\frac{\alpha+1}{2}$. Hence, Equation (\ref{exm-delta}) has a solution $u \in \mathcal{Y}^2_V$.\\

\noindent  2) $1<q< \frac{n}{n-2}$. In this case, take $\alpha=4q-2$ then since $2< 4q-2$, (\ref{B-bound-exm}) reduces to
\[
\Vr Bu \Vr_{L^2_H(\Omega_T)} \leq C \Vr u \Vr_{L^{4q-2}_{V}}^{q},
\]  
Note that $\delta=q< 2q-\frac{1}{2}=\frac{\alpha +1}{2}$ which verifies the condition on $B$. However, the Lagrangian (here the convex function $\varphi$) is not coercive on the space ${\cal Y}_V^\alpha={\cal Y}_V^{4q-2}$. To remedy this, we add a perturbation that makes the Lagrangian coercive on this space by considering the convex function
\[
\varphi_\epsilon (u)=\frac{1}{2}\int_D|\nabla u|^2\, dx + \frac{\epsilon}{4q-2}\int_D|\nabla u|^{4q-2}\, dx.
\]
Now we apply Theorem \ref{phi-not-add} with $\alpha=4q-2$, $V=W^{1,\alpha}_0(D), H=L^2(D)$, and $\varphi_\epsilon$ to get a solution $u_\epsilon$ for the equation
\begin{equation} 
\begin{cases}
du(t)=(\Delta u +\epsilon \Delta_{4q-2}u) \, dt+ |u|^{q-1}u \, dW  \qquad & \textnormal{in} \ [0,T] \times D\\
u(t, x)=0  \qquad & \textnormal{in} \ [0,T] \times \partial D\\
u(0, x)=u_0(x)  \qquad & \textnormal{on} \ D.
\end{cases}
\end{equation}
With a similar argument as what we have already done (twice) in the proof of Theorem (\ref{last-thm}), we let $\epsilon$ go to zero and get a solution for (\ref{exm-delta}) in $ \mathcal{Y}^2_V$. 
\end{proof}
\end{ex}
\begin{ex} \textbf{Stochastic Navier Stokes equation in dimension 2}\\
Consider the incompressible Navier-Stokes equation on a bounded smooth domain $D\subset\R^2$
\begin{equation}\label{exm-navier}
\begin{cases}
du(t)+ (u\cdot \nabla)u\, dt=\nu \Delta u \, dt+\nabla p\, dt+  B(u(t)) \, dW(t) \qquad &\text{on} \, [0,T]\times D\\
\div\, u=0 \qquad &\text{on} \, [0,T]\times D\\ 
u(t, x)=0 \qquad &\text{on} \, [0,T]\times \partial D\\
u(0, x)=u_0(x) \qquad &\text{on} \,  D,
\end{cases}
\end{equation}
where $\nu>0$. Setting $V=\{u \in H^1_0(D;\R^2); \, \div \, u=0 \}$, $H=L^2(D)$ and $\alpha=2$, we assume $B:  \mathcal{Y}^2_V \rightarrow L^2(\Omega_T;H)$ is a weak-to-norm continuous map satisfying (\ref{B-bound}).\\
The functional $\varphi$ and the nonlinear operator $\Lambda$ are defined on $V$ by
\begin{equation}
\varphi(u)= \frac{\nu}{2} \int_D \sum_{j,k=1}^2 \left(\frac{\partial u_j}{\partial x_k}\right)^2 dx \qquad \text{and} \qquad \Lambda u:=(u\cdot \nabla) u.
\end{equation}
The functional $\varphi$ is convex on $V$ and satisfies (\ref{phi-bound}) and (\ref{pphi}). Note that 
$\partial \varphi (u)=-\nu\Delta u +\nabla p$. 
It is also standard to show that $\langle \Lambda u, u\rangle=0$ for $u\in V$. Lifting $\Lambda$ to the path space, one can show that $\Lambda:  \mathcal{Y}^2_V \to L^2_{V^*}$ is weak-to-weak continuous. In fact, let $u^n \rightharpoonup u$ in $\mathcal{Y}^2_V$ then for a fixed $v\in \mathcal{Y}^2_V$   we have that
$$\E \int_0^T \langle \Lambda u^n, v\rangle=\E \int_0^T \int_D \sum_{j,k=1}^2 u_k^n\frac{\partial u^n_j}{\partial x_k} v_j \, dx dt = -\E \int_0^T \int_D \sum_{j,k=1}^2 u_k^n\frac{\partial v_j}{\partial x_k} u^n_j \, dx dt.$$
Moreover, we have the following standard estimate (see for example \cite{G-book}):
$$\Vr \Lambda u^n \Vr_{V^*} \leq C \Vr u^n\Vr_H \Vr u^n\Vr_V,$$
which, due to the fact that $\mathcal{Y}^2_V \subset C(0,T;H)$ continuously, translates to
$$\Vr \Lambda u^n \Vr_{L^2_{V^*}} \leq C \Vr u^n\Vr_{C(0,T;H)} \Vr u^n\Vr_{L^2_V}.$$
Therefore, $\Lambda u^n \rightharpoonup \Lambda u$ in $L^2_{V^*}$ and condition (\ref{Lambda-cond}) holds. 
Applying Theorem (\ref{last-thm}), we deduce that firstly $F_u=Bu$ and also the infimum of the functional 
$$I(u)=\E \int_0^T \Big( \varphi(t,u(t)) + \varphi^*(t,-\tilde{u}(t)- (u\cdot \nabla)u(t)) + \langle u(t), \tilde{u}(t)+(u\cdot \nabla)u(t)\rangle \Big) dt,$$
on $\mathcal{Y}^2_V$ is zero and is attained at a solution $u$ of (\ref{exm-navier}).
\end{ex}
\subsection*{Non-additive noise driven by monotone vector fields in divergence form}\label{div-form}
We now show the existence of a variational solution to the following equation: 
\begin{equation}\label{div-SDE.1}
\begin{cases}
{du}= \div(\beta(\nabla u(t,x)))dt +B(u(t))dW(t)  \qquad & \textnormal{in} \ [0,T] \times D\\
u(0,x)=u_0 &\on \ \partial D, 
\end{cases}
\end{equation}
where 
$D$ is a bounded domain in $\R^n$ and the initial position  $u_0$ belongs to $L^2(\Omega, \F_0,\P; L^2(D))$. We assume that 
\begin{enumerate}
\item The $\Omega_T$-dependent vector field $\beta: \R^n \to \R^n$  is progressively measurable and maximal monotone  such that 
for functions $c_1,c_2,c_3 \in L^\infty(\Omega_T)$, and $m_1, m_2\in L^1(\Omega_T)$, it satisfies $dt \otimes \P$-a.s.    
\begin{align}\label{A-cond20}
 \langle \beta(x),x\rangle \geq \max \{ c_1\Vr x \Vr_{\R^n}^2 -m_1, \, c_2\Vr \beta (x) \Vr^2_{\R^n}-m_2\}\quad \hbox{for all $x\in \R^n$},
\end{align}
and
\begin{equation}\label{A-cond10}
\Vr \beta (x)\Vr_{\R^n} \leq c_3(1+\Vr x \Vr_{\R^n})\quad \hbox{for  all $x\in \R^n$, }
\end{equation}
\item The operator $B:  \mathcal{Y}^2_{H^1_0(D)} \rightarrow L^2(\Omega_T;L^2(D))$ is a  weak-to-norm continuous map such that for some 
 $C>0$ and $0<\delta< \frac{3}{2}$, 
\begin{equation*}\label{B-bound-div}
\Vr Bu \Vr_{L^2_{L^2(D)}(\Omega_T)} \leq C \Vr u\Vr^\delta_{L^2_{H_0^1(D)}(\Omega_T)} \quad\hbox{for any $u\in \mathcal{Y}^2_{H^1_0(D)}$.}
\end{equation*}
\end{enumerate}
\begin{thm} \label{last} Under the above conditions on $\beta$ and $B$, Equation (\ref{div-SDE.1}) has a variational solution.
\end{thm}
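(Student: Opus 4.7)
The plan is to cast equation (\ref{div-SDE.1}) into the framework of Theorem \ref{mono-thm} by verifying that the differential operator $Au:=-\mathrm{div}(\beta(\nabla u))$ defines a progressively measurable $\Omega_T$-dependent maximal monotone map from $V=H^1_0(D)$ into $V^*=H^{-1}(D)$, setting $H=L^2(D)$, $\alpha=\beta=2$, and $\Lambda\equiv 0$ (so conditions (\ref{Lambda-cond.0}) and (\ref{Lambda-cond}) are automatic). The non-linear diffusion coefficient $B$ already satisfies (\ref{B-bound}) by assumption with $\delta<3/2=(\alpha+1)/2$, so the entire conclusion of Theorem \ref{mono-thm} would apply to yield a variational solution in $\mathcal Y^2_{H^1_0(D)}$.

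First, using (\ref{A-cond10}) the expression $\langle Au,v\rangle:=\int_D \beta(\nabla u)\cdot \nabla v\,dx$ makes sense for $u,v\in V$ and defines a bounded operator from $V$ to $V^*$ satisfying
\[
\|Au\|_{V^*}\leq \|\beta(\nabla u)\|_{L^2}\leq c_3(1+\|u\|_V),
\]
which is exactly (\ref{A-Cond1}). For the coercivity (\ref{A-cond2}), I would use (\ref{A-cond20}) pointwise to get
\[
\langle Au,u\rangle=\int_D \beta(\nabla u)\cdot\nabla u\,dx\geq \max\bigl\{c_1\|u\|_V^2-m_1|D|,\; c_2\|\beta(\nabla u)\|_{L^2}^2-m_2|D|\bigr\},
\]
and then use $\|Au\|_{V^*}\leq\|\beta(\nabla u)\|_{L^2}$ in the second branch to recover the required bound in terms of $\|Au\|_{V^*}^2$.

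The main step, and the only non-routine one, is to verify that $A$ is maximal monotone from $V$ to $V^*$. Monotonicity is immediate from the pointwise monotonicity of $\beta$. For maximality, the cleanest route goes through self-dual Lagrangians: by Theorem \ref{Fitz} one may attach to $\beta_{\omega,t}$ a pointwise self-dual Lagrangian $L_{\beta_{\omega,t}}$ on $\R^n\times\R^n$ (measurably in $(\omega,t)$, as in \S\ref{mon.rep}); lifting by integration over $D$ gives the Lagrangian
\[
L(\omega,t,u,p)=\inf\Bigl\{\int_D L_{\beta_{\omega,t}}(\nabla u(x),g(x))\,dx\;:\; g\in L^2(D;\R^n),\ -\mathrm{div}\,g=p\ \text{in }H^{-1}(D)\Bigr\},
\]
which, under the quadratic bounds (\ref{A-cond20})--(\ref{A-cond10}), is a self-dual Lagrangian on $V\times V^*$ satisfying the two-sided bound of Lemma \ref{lem-AtoL}. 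Its associated self-dual vector field is precisely $\bar\partial L=-\mathrm{div}(\beta(\nabla\cdot))=A$, and the second half of Theorem \ref{Fitz} then forces $A$ to be maximal monotone. Progressive measurability of $L$ is inherited from $\beta$ via the measurable selection used in \S\ref{mon.rep}.

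Once $A$ is in place, the hypotheses of Theorem \ref{mono-thm} are all satisfied (with $\Lambda=0$, $k=c_3$ in (\ref{A-Cond1}), and $c_i,m_i$ from (\ref{A-cond20}) in (\ref{A-cond2})), and the given $u_0\in L^2(\Omega,\F_0,\P;L^2(D))$ is admissible. Theorem \ref{mono-thm} then produces a process $u\in\mathcal Y^2_{H^1_0(D)}$ solving (\ref{div-SDE.1}) variationally as the minimizer of the associated self-dual functional, with $F_u=B(u)$ and drift $\tilde u=-Au=\mathrm{div}(\beta(\nabla u))$. The main obstacle remains justifying the surjectivity/selection step required to identify the self-dual Lagrangian of the divergence-form operator with the naive integrated Lagrangian; this is a standard divergence-form verification but is the place where the hypotheses (\ref{A-cond20}) and (\ref{A-cond10}) are essential.
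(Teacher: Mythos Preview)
Your proposal is essentially correct and hinges on the same key construction as the paper: the divergence-form self-dual Lagrangian of Lemma \ref{sd-div-lag}. The organizational route differs slightly. The paper does not bother verifying that $A=-\mathrm{div}(\beta(\nabla\cdot))$ is maximal monotone on $V\times V^*$ and then invoking Theorem \ref{mono-thm}; instead it builds $\mathscr{L}_\beta$ directly on the path space $L^2(\Omega_T;H^1_0(D))\times L^2(\Omega_T;H^{-1}(D))$ (applying Lemma \ref{abstract lemma} with $\mathcal H=L^2(\Omega_T;L^2(D;\R^n))$, $\mathcal V=L^2(\Omega_T;H^1_0(D))$, $\Pi=\nabla$), applies Theorem \ref{last-thm} to obtain $v$ with $\mathscr{L}_\beta(v,-\tilde v)+\langle v,\tilde v\rangle=0$, and only then ``unpacks'' the infimum: coercivity of $f\mapsto \mathcal L_\beta(\nabla v,f)-\langle\nabla v,f\rangle$ forces the infimum to be attained at some $\bar f$, the pointwise self-duality gives $\bar f=\beta(\nabla v)$, and the constraint $\mathrm{div}\,\bar f=\tilde v$ finishes. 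Your ``main obstacle'' (identifying $\bar\partial L$ with $A$) is exactly this unpacking step, just placed earlier in the argument; the paper's route avoids having to prove the reverse inclusion $A\subset\bar\partial\mathscr{L}_\beta$, since only the implication $-\tilde v\in\bar\partial\mathscr{L}_\beta(v)\Rightarrow \tilde v=\mathrm{div}(\beta(\nabla v))$ is needed.
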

We shall need the following lemma, which associates to an $\Omega_T$-dependent self-dual Lagrangian on $\R^n\times \R^n$, a self-dual Lagrangian on 
$L^2(\Omega_T; H^1_0(D)) \times  L^2(\Omega_T; H^{-1}(D))$.

\begin{lem}\label{sd-div-lag} Let  $L$ be an $\Omega_T$-dependent self-dual Lagrangian on $\R^n\times \R^n$, then the Lagrangian defined  by
\begin{equation*}\label{div-lag}
\mathscr{L}(u,p)= \inf \left\{ \E \int_0^T \int_D L (\nabla u(t,x), f(t,x))\, dx \, dt; \ f \in  L^2(\Omega_T;L^2_{\R^n}(D)), -\ntx{div} (f)=p  \right\}
\end{equation*}
is self-dual on $L^2(\Omega_T; H^1_0(D)) \times  L^2(\Omega_T; H^{-1}(D))$.
\end{lem}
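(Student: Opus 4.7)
The plan is to realize $\mathscr{L}$ as the push-forward of the standard lifted self-dual Lagrangian through the gradient operator, and then identify its Legendre transform via Fenchel--Rockafellar duality applied to a linear composition.

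First I would set
\[
Y_0 := L^2(\Omega_T; L^2(D; \R^n)),
\qquad
\mathcal{L}_0(U,F) := \E \int_0^T \int_D L(t,\omega, U(t,x), F(t,x))\, dx\, dt,
\]
and observe that, by the pointwise self-duality of $L$ together with the standard path-space lifting recalled in Section \ref{lift} (equation $\mathcal{L}^*(p,u)= \E \int_0^T L^*(t, p(t), u(t))\, dt$), $\mathcal{L}_0$ is self-dual on $Y_0 \times Y_0$. Next I would introduce the bounded linear operator
\[
T : V := L^2(\Omega_T; H^1_0(D)) \longrightarrow Y_0, \qquad T u = \nabla u,
\]
whose adjoint $T^* : Y_0 \to V^* = L^2(\Omega_T; H^{-1}(D))$ is exactly $T^*f = -\mathrm{div}(f)$, and crucially is \emph{surjective} because $-\mathrm{div}$ is surjective from $L^2(D;\R^n)$ onto $H^{-1}(D)$. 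With this notation $\mathscr{L}(u,p)$ becomes
\[
\mathscr{L}(u,p) = \inf\bigl\{ \mathcal{L}_0(Tu, f) \,:\, f \in Y_0,\ T^*f = p \bigr\}.
\]

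To compute $\mathscr{L}^*(q,v)$ at $(q,v) \in V^* \times V$, I would substitute the constraint $p = T^*f$ and push the supremum through:
\[
\mathscr{L}^*(q,v)
= \sup_{u,\, f}\bigl\{ \langle q,u\rangle_{V^*,V} + \langle v, T^*f\rangle_{V,V^*} - \mathcal{L}_0(Tu,f) \bigr\}
= \sup_{u,\, f}\bigl\{ \langle q,u\rangle + \langle Tv, f\rangle_{Y_0} - \mathcal{L}_0(Tu, f) \bigr\},
\]
which is exactly $\Phi^*(q, Tv)$ for the convex lower semi-continuous function $\Phi(u,f) := \mathcal{L}_0(Tu, f)$ on $V \times Y_0$. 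Writing $\Phi = \mathcal{L}_0 \circ \Pi$ with $\Pi(u,f) = (Tu, f)$, whose adjoint $\Pi^*(G, h) = (T^*G, h)$ is surjective (thanks to the surjectivity of $T^*$), one can apply the Fenchel--Rockafellar formula for the Legendre transform of a composition to obtain
\[
\Phi^*(q, Tv) \;=\; \inf\bigl\{ \mathcal{L}_0^*(G, Tv) \,:\, G \in Y_0,\ T^*G = q \bigr\}
\;=\; \inf\bigl\{ \mathcal{L}_0(Tv, G) \,:\, T^*G = q \bigr\} \;=\; \mathscr{L}(v, q),
\]
where the middle equality uses the self-duality of $\mathcal{L}_0$. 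This yields $\mathscr{L}^*(q,v) = \mathscr{L}(v,q)$, which is the desired self-duality. Convexity and lower semi-continuity of $\mathscr{L}$ follow from those of $\mathcal{L}_0$ and from the linearity of the constraint together with the weak-star compactness provided by the boundedness/coercivity estimates on $L$ coming from Lemma \ref{lem-AtoL}.

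The main obstacle is the step that invokes the Fenchel--Rockafellar duality formula for the composition $\mathcal{L}_0 \circ \Pi$: one needs a constraint qualification to convert the partial Legendre transform into an infimum over the fibers of $T^*$. The clean way to dispose of this is to verify that $\Pi^*$ is surjective (which reduces to surjectivity of $-\mathrm{div} : L^2(D;\R^n)\to H^{-1}(D)$, a classical fact); if one prefers a hands-on argument, the same identity can be proved by expanding $\mathscr{L}^*$ as a sup over $u$ and $f$, introducing a slack variable $U = Tu$ with a Lagrange multiplier $g \in Y_0$, and swapping sup and inf by a standard minimax theorem (applicable because the Lagrangian is convex-concave and coercive in the sup variables, using the estimates on $\mathcal{L}_0$). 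Either route yields the announced self-duality.
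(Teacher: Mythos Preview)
Your proof is correct and follows essentially the same route as the paper: lift $L$ to a self-dual Lagrangian $\mathcal{L}_0$ on $L^2(\Omega_T; L^2(D;\R^n))^2$, then push forward through $\nabla$ and compute $\mathscr{L}^*$ using the surjectivity of $-\mathrm{div}$. The paper first isolates this as an abstract lemma (for any bounded $\Pi : \mathcal{V} \to \mathcal{H}$ with $\Pi^*\Pi$ an isomorphism, here $\Pi = \nabla$ so that $\Pi^*\Pi = -\Delta$) and proves it by the indicator-function/inf-convolution identity $(L + \chi_{\mathrm{range}\,\Pi})^* = L^* \,\square\, \chi_{\mathrm{range}\,\Pi}^*$ rather than invoking the Fenchel--Rockafellar composition formula directly, but the computation and the required qualification condition are the same.
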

We shall need the following general lemma.

\begin{lem}\label{abstract lemma}
Let $L$ be a self-dual Lagrangian on a Hilbert space $\mathcal{H} \times \mathcal{H}$, and let $\Pi:\mathcal{V} \rightarrow \mathcal{H}$ be a bounded linear operator from a reflexive Banach space $\mathcal{V}$ into $\mathcal{H}$ such that the operator $\Pi^*\Pi$ is an isomorphism from $\mathcal{V}$ into $\mathcal{V}^*$. Then, the Lagrangian 
\begin{equation*}
\mathcal{L}(u,p)= \inf \left\{ L( \Pi u, f); \ f \in \mathcal{H} , \Pi^* (f)=p  \right\},
\end{equation*}
is self-dual on $\mathcal{V} \times \mathcal{V}^*$.
\end{lem}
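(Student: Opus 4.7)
The plan is to prove the self-duality identity $\mathcal{L}^*(q,v) = \mathcal{L}(v,q)$ for every $(q,v) \in \mathcal{V}^* \times \mathcal{V}$ by a direct Legendre transform calculation, organised so that the self-duality of $L$ is invoked only at the very end. First, unfolding the definitions and using the adjoint identity $\langle v, \Pi^* f\rangle_{\mathcal{V},\mathcal{V}^*} = \langle \Pi v, f\rangle_{\mathcal{H}}$, the infimum buried inside $\mathcal{L}(u,p)$ can be absorbed into the external supremum, yielding
\begin{equation*}
\mathcal{L}^*(q,v) \;=\; \sup_{u \in \mathcal{V},\, f \in \mathcal{H}} \big\{\langle q, u\rangle + \langle \Pi v, f\rangle - L(\Pi u, f)\big\} \;=\; \tilde{L}^*(q, \Pi v),
\end{equation*}
where $\tilde{L}(u,f) := L(\Pi u, f)$ is a convex, proper, lower semi-continuous function on $\mathcal{V} \times \mathcal{H}$.

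Next I would identify $\tilde{L}^*$ as an image function of $L^*$. Introduce the bounded linear map $N: \mathcal{H} \times \mathcal{H} \to \mathcal{V}^* \times \mathcal{H}$ defined by $N(g, f') := (\Pi^* g, f')$, whose adjoint is $N^*(u, f) = (\Pi u, f)$. The always-valid image-function duality $(N_{\#} L^*)^* = (L^*)^* \circ N^*$, combined with the Fenchel--Moreau identity $L^{**} = L$, yields $(N_{\#} L^*)^*(u, f) = L(\Pi u, f) = \tilde{L}(u, f)$, hence $\tilde{L}^* = (N_{\#} L^*)^{**}$. The assumption that $\Pi^*\Pi$ is an isomorphism of $\mathcal{V}$ onto $\mathcal{V}^*$ furnishes the bounded right inverse $\pi := \Pi (\Pi^*\Pi)^{-1}$ of $\Pi^*$, so $\Pi$ has closed range and $\mathcal{H}$ splits orthogonally as $\operatorname{Range}(\Pi) \oplus \ker \Pi^*$. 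Using this splitting, I would verify that $N_{\#} L^*$ is itself convex, proper and lower semi-continuous: given a convergent sequence $(q_n, f'_n) \to (q_0, f'_0)$, a minimizing sequence $g_n$ with $\Pi^* g_n = q_n$ can be parametrised as $g_n = \pi(q_n) + h_n$ with $h_n \in \ker \Pi^*$, and the joint lower semi-continuity of $L^*$ together with weak compactness then enable passage to the limit. This gives
\begin{equation*}
\tilde{L}^*(q, f') \;=\; (N_{\#} L^*)(q, f') \;=\; \inf\big\{ L^*(g, f') \,:\, g \in \mathcal{H},\; \Pi^* g = q \big\}.
\end{equation*}

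Combining the two steps and applying the self-duality of $L$ in the form $L^*(g, \Pi v) = L(\Pi v, g)$,
\begin{equation*}
\mathcal{L}^*(q, v) \;=\; \tilde{L}^*(q, \Pi v) \;=\; \inf_{g : \Pi^* g = q} L(\Pi v, g) \;=\; \mathcal{L}(v, q),
\end{equation*}
which is the asserted self-duality. The main obstacle is the lower-semi-continuity of $N_{\#} L^*$ in the infinite-dimensional setting, since the image of a convex lsc function under a continuous linear map is not automatically lsc; the hypothesis that $\Pi^*\Pi$ is an isomorphism is precisely what enforces the closed range of $\Pi^*$ and supplies the explicit lifting $\pi$ needed to implement the above minimizing-sequence argument without any additional assumption on the domain or coercivity of $L$.
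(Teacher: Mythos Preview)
Your overall route parallels the paper's. Both unfold the definition to reach
\[
\mathcal{L}^*(q,v)=\sup_{u\in\mathcal{V},\,f\in\mathcal{H}}\big\{\langle q,u\rangle+\langle \Pi v,f\rangle-L(\Pi u,f)\big\},
\]
and must then re-express this supremum as an infimum over the fibre $\{g:\Pi^*g=q\}$. The paper does this by picking $f_0$ with $\Pi^*f_0=q$, rewriting the supremum over the closed range $\mathcal{E}=\Pi(\mathcal{V})$ via its indicator $\chi_{\mathcal{E}}$, and invoking the inf-convolution identity for $(L+\chi_{\mathcal{E}})^*$. Your pushforward formulation $\tilde L^*=(N_{\#}L^*)^{**}$ is the dual packaging of the same Fenchel step, so the two arguments are close variants rather than genuinely different methods.

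The gap lies in your lower-semicontinuity claim. You assert that closed range of $\Pi$ and the bounded lift $\pi=\Pi(\Pi^*\Pi)^{-1}$ suffice to push the minimizing-sequence argument through ``without any additional assumption on the domain or coercivity of $L$''. This is not justified: writing $g_n=\pi(q_n)+h_n$ with $h_n\in\ker\Pi^*$, nothing in the hypotheses bounds $\|h_n\|$. A level-set bound $L^*(g_n,f'_n)\le C$ constrains $h_n$ only if $L^*$ is coercive along $\ker\Pi^*$, and self-duality by itself supplies merely $L^*(g,f')\ge\langle f',g\rangle$, which is constant in $h_n$ once $f'=\Pi v\in(\ker\Pi^*)^\perp$. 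The paper's proof carries the same lacuna --- the inf-convolution identity it invokes requires a constraint qualification that is not checked --- but it does not claim to have dispensed with coercivity. In the intended application (the quadratically bounded $L_{\beta}$ of Lemma~\ref{sd-div-lag}) the needed coercivity is present and the issue evaporates; as a proof of the abstract lemma, however, the weak-compactness step you sketch does not follow from the stated hypotheses.
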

\begin{proof}
For a fixed $(q,v) \in \mathcal{V}^* \times \mathcal{V}$, write
\begin{align*}
\mathcal{L}^*(q,v)&=\sup \Big\{ \langle q,u \rangle + \langle v,p\rangle- \mathcal{L}(u,p); \ u \in \mathcal{V}, \, p\in \mathcal{V}^* \Big\}\\
&=\sup \Big\{ \langle q,u \rangle + \langle v,p\rangle- L(\Pi u,f); \ u \in \mathcal{V}, \, p\in \mathcal{V}^*,   f \in \mathcal{H} , \Pi^* (f)=p \Big\}\\
&=\sup \Big\{ \langle q,u \rangle + \langle v,-\Pi^*f \rangle- L(\Pi u,f); \ u \in \mathcal{V},   f \in \mathcal{H} \Big\}\\
&=\sup \Big\{ \langle q,u \rangle + \langle \Pi v,f \rangle- L(\Pi u,f); \ u \in \mathcal{V},   f \in \mathcal{H} \Big\}.
\end{align*}
Since $\Pi^*\Pi$ is an isomorphism, for $q \in \mathcal{V}^*$ there exists a fixed $f_0 \in \mathcal{H}$ such that $\Pi^*f_0=q$.
Moreover, the space $$\mathcal{E}=\{g \in \mathcal{H}; \ g=\Pi u, \ \textnormal{for some} \ u\in  \mathcal{V}\},$$ is closed in ${\cal H}$ in such a way that its indicator function $\chi_{\mathcal{E}}$ on $\mathcal{H}$
$$\chi_{\mathcal{E}}(g)= \begin{cases}
0 \qquad & g\in \mathcal{E}\\
+\infty &\textnormal{elsewhere},
\end{cases}
$$
is convex and lower semi-continuous. Its Legendre transform is then given for each $f\in \mathcal{H}$ by
$$\chi^*_{\mathcal{E}}(f)= \begin{cases}
0 \qquad &\Pi^*f=0\\
+\infty &\textnormal{elsewhere}.
\end{cases}$$
It follows that 
\begin{align*}
\mathcal{L}^*(q,v)&=\sup \Big\{ \langle f_0,\Pi u \rangle + \langle \Pi v,f \rangle- L(\Pi u,f); \ u \in \mathcal{V},   f \in \mathcal{H} \Big\}\\
&=\sup \Big\{ \langle f_0,g \rangle + \langle \Pi v,f \rangle- L(g,f)- \chi_{\mathcal{E}}(g); \ g \in \mathcal{H},   f \in \mathcal{H} \Big\}\\
 &=(L+\chi_{\mathcal{E}})^*(f_0, \Pi v)\\
&=\inf \Big\{ L^*(f_0-r, \Pi v)+\chi_{\mathcal{E}}^*(r); \, r \in \mathcal{H} \Big\}
\end{align*}
where we have used that the Legendre dual of the sum is inf-convolution. Finally, taking into account the expression for $\chi_{\mathcal{E}}^*$ we obtain 
\begin{align*}
\mathcal{L}^*(q,v)&=\inf \Big\{ L^*( f_0-r, \Pi v); \, r \in \mathcal{H}, \Pi ^*r=0 \Big\}\\&=\inf \Big\{ L(\Pi v, f_0-r ); \, r \in \mathcal{H}, \Pi^*r=0 \Big\}\\
&=\inf \Big\{ L(\Pi v,f ); \, f \in \mathcal{H}, \Pi^*f=q \Big\}\\
&=\mathcal{L}(v,q).
\end{align*}
\end{proof}
\noindent{\bf Proof of Lemma \ref{sd-div-lag}:}
This is now a direct application of Lemma \ref{abstract lemma}. First, lift the random Lagrangian to define a self-dual Lagrangian on $L^2(\Omega_T; L^2(D;\R^n)) \times  L^2(\Omega_T; L^2(D;\R^n))$, via 
\[
\mathcal{L}(u,p)=\E \int_0^T \int_D L (u(t,x), p(t,x))\, dx \, dt,
\]
then use Lemma \ref{abstract lemma} with this Lagrangian and the operators
\begin{equation*}\label{Delta-iso}
L^2(\Omega_T; H^1_0(D)) \xrightarrow{\Pi=\nabla}  L^2(\Omega_T; L^2(D;\R^n) ) \xrightarrow{\Pi^*=\nabla^*}  L^2(\Omega_T; H^{-1}(D)), 
\end{equation*}
to get that $\mathscr{L}$ is a self-dual Lagrangian on $L^2(\Omega_T; H^1_0(D)) \times  L^2(\Omega_T; H^{-1}(D))$. Note that $\Pi^*\Pi=\nabla^*\nabla =-\Delta$ induces an isomorphism from 
$L^2(\Omega_T; H^1_0(D))$ to $L^2(\Omega_T; H^{-1}(D))$. \\

\noindent{\bf Proof of Theorem \ref{last}:}
Again, by Theorem \ref{Fitz} and the discussion in Section \ref{mon.rep}, one can associate 
to the maximal monotone map $\beta_{\omega,t}$, an $\Omega_T$-dependent self-dual Lagrangian $L_{\beta_{\omega,t}}(u,p)$ on $\R^n \times \R^n$ in such a way that 
\begin{equation*}
\beta_{\omega,t}=\bar{\partial}L_{\beta_{\omega,t}}.
\end{equation*}
If $\beta$ satisfies (\ref{A-cond20}), then the $\Omega_T$-dependent self-dual Lagrangian $L_{\beta_{\omega,t}}$ on $\R^n\times \R^n$ satisfy  for almost every $t\in [0, T]$, $\P$-a.s.
\begin{align}\label{L-inside-bounds}
C_1 (\Vr x \Vr^2_{\R^n}+\Vr p \Vr^2_{\R^n}-n_1) \leq L_{\beta_{w,t}} (x,p)
\leq C_2(\Vr x \Vr^2_{\R^n}+\Vr p \Vr^2_{\R^n} +n_2),
\end{align}
where $C_1, C_2 \in L^\infty(\Omega_T)$ and $n_1,n_2\in L^1(\Omega_T)$. \\
We can then lift it to the space 
$L^2(\Omega_T; L^2_{\R^n}(D)) \times  L^2(\Omega_T; L^2_{\R^n}(D))$ via 
\begin{equation*}\label{lifted-Lag-maxmon}
\mathcal{L}_\beta (u,p)=\E \int_0^T \int_DL_{\beta_{\omega,t}}(u(t, w, x),p(t, w, x))\, dx dt,
\end{equation*}
in such a way that for positive constants $C_1, C_2$ and $C_3$ (different from above)
\begin{align*}\label{L-bounds}
 C_2 (\Vr u \Vr^2_{L^2_H(\Omega_T)}+\Vr p \Vr^2_{L^2_H(\Omega_T)}-1) \leq  \mathcal{L}_{\beta}(u,p)
\leq C_1(1+\Vr u \Vr^2_{L^2_H(\Omega_T)}+\Vr p \Vr^2_{L^2_H(\Omega_T)}), 
\end{align*}
where $H:=L^2_{\R^n}(D)$.
In view of (\ref{A-cond10}), we also have
\begin{equation*}\label{del-bound}
\Vr \bar{\partial}{\cal L}_\beta (u)\Vr_{L^2_H(\Omega_T)} \leq C_3(1+ \Vr u \Vr_{L^2_H(\Omega_T)}).
\end{equation*}
Use now Lemma \ref{sd-div-lag} to lift  $\mathcal{L}_\beta$ to a self-dual Lagrangian $\mathscr{L}_\beta$ on $L^2(\Omega_T; H^1_0(D)) \times  L^2(\Omega_T; H^{-1}(D))$, via the formula 
\begin{eqnarray}\label{div-lag.1}
\nonumber
\mathscr{L}_\beta (u,p)&=& \inf \left\{ \E \int_0^T \int_D L_{\beta_{w,t}}(\nabla u(t,x), f(t, x))\, dx \, dt; \ f \in  L^2(\Omega_T; L_{\R^n}^2(D)), -\ntx{div} (f)=p  \right\}\\
&=&\inf \left\{\mathcal{L}_\beta (\nabla u, f); \ f \in  L^2(\Omega_T; L_{\R^n}^2(D)), -\ntx{div} (f)=p  \right\}.
\end{eqnarray}
Apply now Theorem \ref{last-thm} to get a process $v \in \mathcal{Y}^2_{H^1_0(D)}$ such that 
\begin{align*}
\mathscr{L}_\beta (v, -\tilde{v})+\langle v, \tilde{v} \rangle&=0\\
F_v&=B\\
v(0)&=u_0, 
\end{align*}
and note that 
\begin{align*}
0&= \mathscr{L}_\beta (v, -\tilde{v})+\langle v, \tilde{v} \rangle \\
&=\underset{ f \in  L^2(\Omega_T;L^2_{\R^n}(D))}\inf \bigg\{ \E \int_0^T \int_D L_{\beta (w,t)}(\nabla v,f)\, dx \, dt; \ntx{div} (f)=\tilde{v}  \bigg\}+\E \int_0^T \langle v(t), \tilde{v}(t) \rangle_{_{H^1_0,H^{-1}}} dt \\
& =\underset{ f \in  L^2(\Omega_T;L^2_{\R^n}(D))}\inf \bigg\{ \E \int_0^T \int_D L_{\beta (w,t)}(\nabla v,f)  - \langle \nabla v(x,t), {f}(x,t) \rangle \, dx \, dt\bigg\}\\
&=\underset{ f \in  L^2(\Omega_T;L^2_{\R^n}(D))}\inf J_v(f),
\end{align*}
where 
$$J_v(f):=\E \int_0^T \int_D \{L_{\beta (w,t)}(\nabla v,f)  - \langle \nabla v(x,t), {f}(x,t) \rangle\} \, dx \, dt.$$
Note that condition (\ref{L-inside-bounds}) implies that $L(y,0) \leq C(1+\Vr y\Vr_{\R^n}^2)$, which means that $J_v$ 
 is coercive on $L^2(\Omega_T;L^2_{\R^n}(D))$, thus there exists $\bar{f} \in L^2(\Omega_T;L^2_{\R^n}(D))$ with $\div(\bar{f})=\tilde{v}$ such that 
$$ \E \int_0^T \int_D L_{\beta (w,t)}(\nabla v,\bar{f})  - \langle \nabla v(x,t), \bar{f}(x,t) \rangle \, dx \, dt=0.$$
The self-duality of $L$ then implies that 
$\bar{f}(x,t) = \bar{\partial}L_\beta(\nabla v(x,t))=\beta(\nabla v(x,t))$. Taking divergence 
leads to $\tilde{v} \in \div\left(\beta(\nabla v)\right)$. Taking integrals over $[0,t]$ and using the fact that $v \in \mathcal{Y}_{H^1_0(D)}^2$ finally gives
\begin{align*}
\int_0^t \div\left(\beta(\nabla v(s))\right)\, ds&= \int_0^t \tilde{v}(s) ds = v(t)-v(0)-\int_0^t F_v(s) dW(s)\\
&= v(t)-u_0-\int_0^t B(v(s)) dW,
\end{align*}
which completes the proof.


\end{document}